\documentclass[11pt,a4page]{article}
\usepackage{amssymb}
\usepackage{amsmath}
\usepackage{amsfonts}
\usepackage{appendix}
\usepackage{amsthm}
\usepackage{bookmark}
\usepackage{cite}
\usepackage{changes}
\usepackage{color}
\usepackage{verbatim}

\textwidth=144 true mm \textheight=210 true mm \topmargin=-5 true mm
\oddsidemargin= 7 true mm
\date{}

\newtheorem{thm}{Theorem}[section]

\newtheorem{lem}[thm]{Lemma}
\newtheorem{prop}[thm]{Proposition}

\theoremstyle{definition}
\newtheorem{defn}{Definition}[section]

\theoremstyle{remark}

\numberwithin{equation}{section}

\DeclareMathSymbol{\C}{\mathalpha}{AMSb}{"43}

\numberwithin{equation}{section} \theoremstyle{plain}

\renewcommand{\theequation}{\arabic{section}.\arabic{equation}}

\def\R{{\textbf{R}}}
\def\H{\mathcal{H}}
\def\A{\mathcal{A}}

\def\f{\frac}
\def\R{\mathbb R}
\def\C{\mathbb C}

\def\Om{\Omega}
\def\var{\varphi}
\def\ep{\epsilon}
\def\ds{\displaystyle}

\title {Mass concentration and uniqueness of ground states for  mass subcritical rotational nonlinear Schr\"{o}dinger equations}
\date{\today}
\author{ Yongshuai Gao\thanks{Email: ysgao@mails.ccnu.edu.cn.}, and\, Yong Luo\thanks{Email: yluo@ccnu.edu.cn. Y. Luo is partially supported by the Project funded by China Postdoctoral Science Foundation No. 2019M662680.
	} 
\\
\small \it	School of Mathematics and Statistics,\\
\small \it Central China Normal University,\\
\small \it Wuhan 430079, People's Republic of China\\}

\begin{document}
\baselineskip= 15pt
\maketitle
\begin{abstract}
This paper considers ground states of  mass subcritical rotational nonlinear Schr\"{o}dinger equation
\begin{equation*}
-\Delta u+V(x)u+i\Om(x^\perp\cdot\nabla u)=\mu u+\rho^{p-1}|u|^{p-1}u \,\ \text{in} \,\ \R^2,
\end{equation*}
where $V(x)$ is an external potential, $\Om>0$ characterizes the rotational velocity of the trap $V(x)$, $1<p<3$ and $\rho>0$ describes the strength of the attractive interactions. It is shown that ground states of the above equation can be described equivalently by minimizers of the $L^2-$ constrained variational problem.
We prove that minimizers exist for any $\rho\in(0,\infty)$ when $0<\Om<\Om^*$, where $0<\Om^*:=\Om^*(V)<\infty$ denotes the critical rotational velocity of $V(x)$. While $\Om>\Om^*$, there admits no minimizers for any $\rho\in(0,\infty)$. For fixed $0<\Om<\Om^*$, by using energy estimates and blow-up analysis, we also analyze the limit behavior of minimizers as $\rho\to\infty$. Finally, we prove that up to a constant phase, there exists a unique minimizer when $\rho>0$ is large enough and $\Om\in(0,\Om^*)$ is fixed.

\bigskip
{\bf Keywords:}\ \ Rotational  nonlinear Schr\"{o}dinger equations; Ground states; Mass concentration; Local uniqueness.


\end{abstract}
\bigskip

\section{Introduction}
In this paper, we study ground states of the following time-independent nonlinear Schr\"{o}dinger equation
\begin{equation}\label{bc1-1}
-\Delta u+V(x)u+i\Om(x^\perp\cdot\nabla u)=\mu u+\rho^{p-1}|u|^{p-1}u \,\ \text{in} \,\ \R^2,
\end{equation}
where $V(x)$ is a trapping potential, $\Om>0$ characterizes the rotational velocity of the trap $V(x)$, $x^\perp=(-x_2,x_1)$ with $x=(x_1,x_2)\in\R^2$, $\mu\in\R$ is the chemical potential, $\rho>0$ describes the strength of the attractive interactions, and $1<p<3$.
The rotational nonlinear Schr\"{o}dinger equation \eqref{bc1-1} appears in many aspects of physics, such as nonlinear optics, plasma physics and so on \cite{RS,SS}.
In particular, equation \eqref{bc1-1} with $p=3$ is also known as Gross-Pitaevskii equation, which models two dimension attractive Bose-Einstein condensates in a trap $V(x)$ rotating at the velocity $\Om$, see \cite{A,DS,F,S} and the references therein.
	
As illustrated by Theorem \ref{bc} in the Appendix, ground states of \eqref{bc1-1} can be described equivalently by minimizers of the following mass subcritical ($L^2$-subcritical) constraint variational problem
\begin{equation}\label{A1cvp}
I(\rho):=\inf\limits_{\{u\in \H, \|u\|^2_2=1\}}E_\rho(u),
\end{equation}
where the Gross-Pitaevskii (GP) energy functional $E_\rho(u)$ is defined by
\begin{equation}\label{A2ef}
E_\rho(u):=\int_{\R^2}\big(|\nabla u|^{2}+V(x)|u|^{2}\big)dx-\f{2\rho^{p-1}}{p+1}\int_{\R^2}|u|^{p+1}dx-\Om\int_{\R^2}x^\perp\cdot(iu,\nabla u)dx.
\end{equation}
Here $(iu,\nabla u)=i(u\nabla \bar{u}-\bar{u}\nabla u)/2$, and the space $\H$ is defined as
\begin{equation}\label{A3space}
\H:=\Big\{u\in H^1(\R^2,\C):~\int_{\R^2}V(x)|u|^{2}dx<\infty\Big\}
\end{equation}
with the associated norm $\|u\|_\mathcal{H}=\{\int_{\R^2}\big(|\nabla u|^2+(1+V(x))|u|^2\big)dx\}^{\f{1}{2}}$. To discuss equivalently ground states of \eqref{bc1-1}, in this paper, we shall therefore focus on investigating \eqref{A1cvp}, instead of \eqref{bc1-1}.

For the non-rotational case $\Om=0$, it is easy to obtain from the diamagnetic inequality $|\nabla u(x)|\geq |\nabla |u|(x)|$  that minimizers of (\ref{A1cvp}) are essentially real-valued. Many qualitative properties of the
real-valued  minimizers were studied in the last decades, see \cite{C,CL,GZZ1,LZ,L1,M} and the references therein. Indeed, because $I(\rho)$ is a  $L^2-$subcritical problem ($1<p<3$),  one can use the celebrated concentration-compactness lemma \cite{L1} to prove that real-valued minimizers of $I(\rho)$ always exist  for any $\rho\in(0,\infty)$. Moreover, by applying the energy method developed in \cite{GS,GWZZ,GZZ2}, \cite{LZ} also obtained the mass concentration behavior and the local uniqueness of minimizers for $I(\rho)$ as $\rho\to\infty$ recently.

On the other hand, minimizers of $I(\rho)$ are no longer real-valued in the rotational case $\Om>0$.
To our knowledge, the existence of complex-valued minimizers for problem (\ref{A1cvp})  was obtained earlier in \cite{EL}, which however focused mainly on the special case where $V(x)=|x|^2$ and $\Omega=2$. Subsequently, \cite{CE} proved the stability of complex-valued minimizers for $I(\rho)$ in $\R^3$.
Recently, much attention has been attracted to the problem $I(\rho)$ again due to its significance on rotating BEC theory \cite{BC2,GLP,GLY,LNR}.  According to \cite{LNR}, the model of two dimensional rotating BEC can be described by the $L^2-$critical version of $I(\rho)$, for which the exponent $1<p<3$ was replaced by $p=3$ in the  functional $E_\rho(u)$. For this $L^2-$critical problem, \cite{GLY,LNR}
proved that there exist critical constants  $0<\Om^*:=\Om^*(V)\leq\infty$ and  $\rho^*>0$,
such that for any $\Om\in(0,\Om^*$), minimizers of $I(\rho)$ exist if and only if $\rho<\rho^*$. Moreover, by developing the method of inductive symmetry, \cite{GLY} also proved the uniqueness and free-vortex of minimizers for $I(\rho)$ as $\rho\nearrow\rho^*$ for some suitable class of radial potential $V(x)=V(|x|)$. More recently, the authors in \cite{GLP} have generalized the local uniqueness result of \cite{GLY} to the non-radially symmetric case of $V(x)$.

Compared with those analysis  of the real-valued minimizers in \cite{GZZ1,LZ,M}, we remark that there are some new problems need to be solved in the rotational case  $\Om>0$ for $I(\rho)$ (see \cite{GLP,GLY,LNR}). One of the key issue is to obtain a refined estimates of the imaginary part of the complex-valued minimizers, for which some new phase transformations about minimizers and some properties of the linearized operator was fully utilized in \cite{GLP,GLY,LNR}. Moreover, the $L^2-$ critical exponent $p=3$ also plays an important role in their analysis and calculations. Therefore, a natural question to ask is that whether those results in \cite{GLP,GLY}, which focused on studying $I(\rho)$ with $p=3$, still holds true for any $1<p<3\,$? The main purpose of this paper is to settle this problem, {\em we shall investigate the existence and nonexistence, the limit behavior and local uniqueness of complex-valued minimizers of (\ref{A1cvp}) for any $1<p<3$ and $\Omega>0$}.

Throughout the paper, we always assume that the trapping potential  $V(x)$ satisfies
\begin{equation}\label{A5Vdecondition}
0\leq V(x)\in L^\infty_{loc}(\R^2),\,\ \varliminf\limits_{|x|\to\infty}\f{V(x)}{|x|^2}>0,
\end{equation}
and we denote the critical rational speed $\Omega^*$  by
\begin{equation}\label{A6Omdedefintion}
\Om^*:=\sup\Big\{\Om>0: \ V(x)-\f{\Om^2}{4}|x|^2\to\infty \,\ \text{as} \ |x|\to\infty\Big\}.
\end{equation}
To state our main results, we first recall the following Gagliardo-Nirenberg inequality \cite{WMI}
     \begin{equation}\label{A7GN}
     \|u\|^{p+1}_{p+1}\leq \f{\f{p+1}{2}(\f{2}{p-1})^{\f{p-1}{2}}}{\|w\|^{p-1}_2}\|\nabla u\|^{p-1}_2\|u\|^2_2,\,\ u\in H^1(\R^2,\R), \,\ 1<p<3,
     \end{equation}
where $w$ is the unique (up to translations) positive radial solution of the following nonlinear scalar field equation \cite{GNN,K,WMI}
    \begin{equation}\label{A4wdeequation}
    -\Delta u+u-u^p=0 \,\ \text{in} \,\ \R^2, \,\ \text{where} \,\ u\in H^1(\R^2,\R),
    \end{equation}
and the equality in (\ref{A7GN}) is achieved at $u=w$. Note also from \cite [Proposition 4.1] {GNN} that $w=w(|x|)>0$ satisfies
     \begin{equation}\label{A8wdecay}
      w(x), \ |\nabla w(x)|=O(|x|^{-\f{1}{2}}e^{-|x|})\,\ \text{as} \,\ |x|\rightarrow \infty.
     \end{equation}
Moreover, it follows from \cite [Lemma 8.1.2] {C} that $w$ satisfies
     \begin{equation}\label{A9identity}
     \int_{\R^2}|\nabla w|^2dx=\f{p-1}{p+1}\int_{\R^2}w^{p+1}dx=\f{p-1}{2}\int_{\R^2}w^2dx.
     \end{equation}
Recall also from \cite{LL} the following diamagnetic inequality: for $\A=\f{\Om}{2}x^\perp$,
    \begin{equation}\label{A10diamagnetic.inequ}
     |\nabla u|^2-\Om x^\perp\cdot(iu,\nabla u)+\f{\Om^2}{4}|x|^2|u|^2=|(\nabla-i\A)u|^2
     \geq\big|\nabla |u|\big|^2,\,\ u\in H^1(\R^2,\C).
     \end{equation}

Applying the Gagliardo-Nirenberg inequality (\ref{A7GN}) and the diamagnetic inequality (\ref{A10diamagnetic.inequ}), our first result on the existence and nonexistence of minimizers for $I(\rho)$ is stated as the following theorem.
\vskip 0.2truein
    \begin{thm}\label{thm.exist}
Suppose $V(x)$ satisfies (\ref{A5Vdecondition}), then we have
\begin{enumerate}
\item If $0<\Om<\Om^*$, then $I(\rho)$ admits at least one minimizer for any $\rho\in(0,\infty)$;
\item If $\Om>\Om^*$, then $I(\rho)$ admits no minimizer for any $\rho\in(0,\infty)$.
\end{enumerate}
    \end{thm}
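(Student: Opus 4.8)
The plan is to treat the two parts by a standard concentration-compactness / coercivity dichotomy, using the diamagnetic inequality \eqref{A10diamagnetic.inequ} to reduce, up to controlled errors, to the non-rotational setting. For part (1), fix $\Om\in(0,\Om^*)$. First I would rewrite the energy functional by completing the magnetic square: for $\A=\tfrac{\Om}{2}x^\perp$,
\begin{equation*}
E_\rho(u)=\int_{\R^2}|(\nabla-i\A)u|^2\,dx+\int_{\R^2}\Big(V(x)-\tfrac{\Om^2}{4}|x|^2\Big)|u|^2\,dx-\tfrac{2\rho^{p-1}}{p+1}\int_{\R^2}|u|^{p+1}\,dx.
\end{equation*}
By the definition \eqref{A6Omdedefintion} of $\Om^*$ and assumption \eqref{A5Vdecondition}, since $\Om<\Om^*$ the effective potential $W_\Om(x):=V(x)-\tfrac{\Om^2}{4}|x|^2$ is bounded below and tends to $+\infty$ as $|x|\to\infty$; moreover one can show that $W_\Om(x)\ge c|x|^2-C$ for some $c>0$ when $\Om$ is a little below $\Om^*$, and in any case it is coercive. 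Combining this with the diamagnetic bound $|(\nabla-i\A)u|^2\ge |\nabla|u||^2$ and the Gagliardo-Nirenberg inequality \eqref{A7GN} applied to $|u|$, one gets, for $\|u\|_2^2=1$,
\begin{equation*}
E_\rho(u)\ge \big\|\nabla|u|\big\|_2^2+\int_{\R^2}W_\Om(x)|u|^2\,dx-\tfrac{2\rho^{p-1}}{p+1}\cdot\tfrac{\tfrac{p+1}{2}(\tfrac{2}{p-1})^{(p-1)/2}}{\|w\|_2^{p-1}}\big\|\nabla|u|\big\|_2^{p-1}.
\end{equation*}
Since $1<p<3$ means $p-1<2$, the term $-C\rho^{p-1}\|\nabla|u|\|_2^{p-1}$ is sublinear in $\|\nabla|u|\|_2^2$, so Young's inequality absorbs it: $E_\rho(u)\ge \tfrac12\|\nabla|u|\|_2^2+\int W_\Om|u|^2-C(\rho)$. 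Hence $I(\rho)>-\infty$ and any minimizing sequence $(u_n)$ is bounded in $\H$ — the $L^2$-subcriticality is exactly what makes this work for every $\rho\in(0,\infty)$, with no smallness assumption.

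Next I would extract a weak limit $u_n\rightharpoonup u_0$ in $\H$. Because $W_\Om$ is coercive, the embedding $\H\hookrightarrow L^q(\R^2)$ is compact for $2\le q<\infty$ (this is the standard argument: split into $|x|\le R$, where Rellich applies, and $|x|>R$, where the mass is controlled by $\tfrac{1}{\inf_{|x|>R}W_\Om}\int W_\Om|u_n|^2$, which is uniformly small). Therefore $\|u_0\|_2^2=1$, $u_n\to u_0$ strongly in $L^{p+1}$, and the weak lower semicontinuity of $u\mapsto \int|(\nabla-i\A)u|^2+\int W_\Om|u|^2$ (a sum of an $L^2$-norm-squared of a bounded operator and a nonnegative-up-to-constant weighted $L^2$ term) gives $E_\rho(u_0)\le \liminf E_\rho(u_n)=I(\rho)$. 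Thus $u_0$ is a minimizer. I would also remark that $E_\rho$ is invariant under $u\mapsto e^{i\theta}u$, which is why uniqueness can only be expected up to a constant phase (relevant for the later theorem, not here).

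For part (2), suppose $\Om>\Om^*$. By the definition \eqref{A6Omdedefintion}, $W_\Om(x)=V(x)-\tfrac{\Om^2}{4}|x|^2$ does \emph{not} go to $+\infty$; more usefully, there is a sequence $x_k\to\infty$ (or at least a divergent set) along which $W_\Om(x_k)\le C$, in fact one can arrange $\int_{B_1(x_k)}W_\Om(x)\,dx\le C|B_1|$ for a suitable sequence $|x_k|\to\infty$. The idea is then a translation-to-infinity test-function argument: take $\varphi\in C_c^\infty$ with $\|\varphi\|_2=1$ supported in $B_1(0)$, set $u_k(x):=e^{i\A(x_k)\cdot x}\varphi(x-x_k)$ (the phase factor is chosen to kill the cross term $-\Om x^\perp\cdot(iu,\nabla u)$ up to lower-order pieces, equivalently to gauge away the vector potential near $x_k$), and compute $E_\rho(u_k)$. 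The gradient and magnetic terms contribute $O(1)$ (independent of $k$, since $\varphi$ is fixed and only the part of $\A$ linear in the shift is gauged out, leaving a bounded remainder after optimizing the phase), the $W_\Om$ term is $\le C$, and the attractive term is a fixed negative constant $-c_0<0$; but crucially one can also \emph{rescale} $\varphi$ — replace $\varphi$ by $\varphi_\la(x)=\la\varphi(\la x)$ and let $\la\to\infty$ so that $-\tfrac{2\rho^{p-1}}{p+1}\|\varphi_\la\|_{p+1}^{p+1}=-c\la^{p-1}\to-\infty$ faster than the potential term $\int W_\Om|\varphi_\la|^2$ grows (which, because $W_\Om$ fails to be coercive along $x_k$, grows like $O(\la^{0})\cdot$const rather than $\la^2$ if $x_k$ is pushed out suitably as $\la\to\infty$), while the kinetic term grows only like $\la^2$. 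A careful joint choice of $\la=\la_k$ and $x_k$ makes $E_\rho(u_k)\to-\infty$, so $I(\rho)=-\infty$ and no minimizer exists.

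The main obstacle is part (2): one must coordinate the concentration scale $\la$ with the translation $x_k$ so that the negative nonlinear term genuinely dominates both the kinetic energy $\sim\la^2$ and the residual trapping energy. If $\Om>\Om^*$ then by definition $W_\Om$ is unbounded below or at least fails to diverge, so along a good sequence $x_k$ the trapping energy of a unit bump stays $O(1)$; but to beat the $\la^2$ kinetic growth we cannot simply concentrate a fixed bump — instead the right move is to use a bump that is \emph{spread out} (small $\|\nabla\varphi\|_2$, i.e. $\la\to 0$) and exploit that $\|\varphi_\la\|_{p+1}^{p+1}\sim\la^{p-1}\to\infty$ as $\la\to0$ while $\|\nabla\varphi_\la\|_2^2\sim\la^2\to0$; the price is that $\varphi_\la$ spreads over a ball of radius $\sim\la^{-1}\to\infty$, where $W_\Om$ may be large, so one needs $W_\Om$ to be not-too-large on an increasing sequence of balls, which is precisely guaranteed by $\Om>\Om^*$ (otherwise $W_\Om\to+\infty$ everywhere and the trap wins). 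Pinning down this growth bound on $W_\Om$ — i.e. extracting from the failure of the $\Om^*$-condition a quantitative statement like ``$\inf_{B_R}W_\Om$ grows subquadratically along a sequence $R\to\infty$'' — and then optimizing $\la=\la(R)$ is the technical heart of the nonexistence proof.
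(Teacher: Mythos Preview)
Your argument for Part~(1) is correct and essentially coincides with the paper's: complete the magnetic square, apply the diamagnetic inequality and Gagliardo--Nirenberg with $p-1<2$ to get coercivity, then use the compact embedding $\H\hookrightarrow L^q$ and weak lower semicontinuity.

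Part~(2), however, has a genuine gap. First, there is a computational slip: with $\varphi_\la(x)=\la\varphi(\la x)$ in $\R^2$ one has $\|\varphi_\la\|_{p+1}^{p+1}=\la^{p-1}\|\varphi\|_{p+1}^{p+1}$, which tends to $0$, not $\infty$, as $\la\to0$ (since $p>1$). So ``spreading'' does not drive the energy to $-\infty$. And concentrating ($\la\to\infty$) cannot work either under your hypothesis that $W_\Om$ is merely bounded along the sequence $x_k$, since then the kinetic term $\sim\la^2$ dominates the nonlinear term $\sim-\la^{p-1}$ for $p-1<2$.

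The missing observation is that for $\Om>\Om^*$ the effective potential $V_\Om(x)=V(x)-\tfrac{\Om^2}{4}|x|^2$ is not just non-coercive but tends to $-\infty$ quadratically along some sequence: pick any $\Om'\in(\Om^*,\Om)$; since $\Om'>\Om^*$ there exist $|x_k|\to\infty$ with $V(x_k)-\tfrac{(\Om')^2}{4}|x_k|^2\le C$, hence $V_\Om(x_k)\le C-\tfrac{\Om^2-(\Om')^2}{4}|x_k|^2\to-\infty$. The paper exploits exactly this: it takes a concentrated trial function at scale $\tau\to\infty$ centred at a point $x_\tau$ chosen so that $V_\Om(x_\tau)\le -c\tau^2$, together with the gauge phase $e^{i\frac{\Om}{2}x\cdot x_\tau^\perp}$. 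Then the kinetic energy is $O(\tau^2)$, the nonlinear term is $-c_1\tau^{p-1}$, but the potential term is $\le -c\tau^2$, and it is this \emph{negative potential} contribution (not the nonlinearity) that drives $E_\rho$ to $-\infty$. Your proposal never uses the negativity of $V_\Om$, which is why the balance of powers cannot be closed.
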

By variational theory, any minimizer $u_\rho$ of $I(\rho)$ satisfies the following Euler-Lagrange equation
    \begin{equation}\label{A11udeELE}
     -\Delta u_\rho+V(x)u_\rho+i\Om(x^\perp\cdot\nabla u_\rho)=\mu_\rho u_\rho+\rho^{p-1}|u_\rho|^{p-1}u_\rho \,\ \text{in} \,\ \R^2,
     \end{equation}
where $\mu_\rho\in\R$ is a suitable Lagrange multiplier satisfying
\begin{equation}\label{1.12}
\mu_\rho=I(\rho)-\f{p-1}{p+1}\rho^{p-1}\int_{\R^2}|u_\rho|^{p+1}dx, \,\ \rho\in(0,\infty).
\end{equation}
By making full use of the equation \eqref{A11udeELE}, we next focus on investigating the limit behavior of minimizers for $I(\rho)$ as $\rho\to\infty$, for which we define
\vskip 0.2truein
    \begin{defn}\label{Def.homo}
The nonnegative function $h(x):\R^2\to\R^+$ is homogeneous of degree $s\in\R^+$ (about the origin), if there exists some $s>0$ such that
     \begin{equation*}
     h(tx)=t^sh(x)\,\ \text{in} \,\ \R^2 \,\ \text{for any} \,\ t>0.
     \end{equation*}
    \end{defn}
\noindent This definition implies that if $h(x)\in C(\R^2)$ is homogeneous of degree $s>0$, then
     \begin{equation*}
     0\leq h(x)\leq C|x|^s \,\ \text{in} \,\ \R^2, \,\ \text{where} \,\ C:=\max\limits_{x\in \partial B_1(0)}h(x).
     \end{equation*}
Furthermore, if $h(x)\to\infty$ as $|x|\to\infty$, then the origin is the unique minimum point of $h(x)$.

Following the above definition, we next assume that $V_\Om(x):=V(x)-\f{\Om^2}{4}|x|^2$ satisfies
\begin{enumerate}
\item [($V_1$).] $V_{\Om}(x)\geq 0$, $\big\{x\in\R^2:\ V_\Om(x)=0\big\}=\{0\}$, and there exists a $\kappa>0$ such that $$V_\Om(x)+|\nabla V_\Om(x)|\leq Ce^{\kappa|x|} \,\ \text{as} \,\ |x|\to\infty.$$
\end{enumerate}

\begin{enumerate}
\item [($V_2$).]
There exists a homogeneous function $h(x)\in C^1(\R^2)$  of degree $1<s\leq2$, which satisfies  $\lim\limits_{|x|\to\infty}h(x)=+\infty$ and
$H(y):=\int_{\R^2}h(x+y)w^2(x)dx$ admits a unique global minimum point $y_0\in\R^2$, such that as $|x|\to0$,
\begin{equation}\label{A20-1}
V_\Om(x)=h(x)+o(|x|^s),\,\ \f{\partial V_\Om(x)}{\partial x_j}=\f{\partial h(x)}{\partial x_j}+o(|x|^{s-1}) \,\ , \,\ \hbox{where}\,\ j=1,2.
\end{equation}
\end{enumerate}

Under the assumptions \eqref{A5Vdecondition}, $(V_1)$ and $(V_2)$, we now give the following theorem on the limit behavior of minimizers for $I(\rho)$ as $\rho\to\infty$.
\vskip 0.2truein
\begin{thm}\label{thm.limibeha}
Suppose $V(x)$ satisfies (\ref{A5Vdecondition}), $(V_1)$ and $(V_2)$, and assume $0<\Omega<\Omega^*$, where $\Omega^*>0$ is defined by (\ref{A6Omdedefintion}). Denote $a^*:=\|w\|^2_2$, where $w(x)$ is the unique positive solution of (\ref{A4wdeequation}). Let $u_\rho$ be a minimizer of $I(\rho)$, then we have
     \begin{equation}\label{A12limibeha}
\lim_{\rho\to\infty}\ep_\rho u_\rho\big(\ep_\rho (x+y_0)\big)e^{-i\big(\f{\ep^2_\rho\Om}{2}x\cdot y^\perp_0-\theta_\rho\big)}=\f{w(x)}{\sqrt{a^*}}
     \end{equation}
strongly in $H^1(\R^2,\C)\cap L^\infty(\R^2,\C)$, where $y_0$ is defined in $(V_2)$, $\theta_\rho\in[0,2\pi)$ is a properly chosen constant, and $\ep_\rho$ is defined as
      \begin{equation}\label{A13epk}
     \ep_\rho:=\Big(\f{\rho}{\sqrt{a^*}}\Big)^{-\f{p-1}{3-p}}.
     \end{equation}
    \end{thm}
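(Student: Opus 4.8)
The plan is to follow the energy--estimate--plus--blow--up scheme, working throughout in the magnetic gauge $-\Delta+i\Om x^\perp\cdot\nabla=(-i\nabla-\A)^2-\tfrac{\Om^2}{4}|x|^2$ with $\A=\tfrac{\Om}{2}x^\perp$, so that for $\|u\|_2^2=1$,
\[
E_\rho(u)=\int_{\R^2}\big|(\nabla-i\A)u\big|^2\,dx+\int_{\R^2}V_\Om(x)|u|^2\,dx-\tfrac{2\rho^{p-1}}{p+1}\int_{\R^2}|u|^{p+1}\,dx .
\]
\textbf{Step 1: a sharp two-term expansion of $I(\rho)$.} For the lower bound I would discard $\int V_\Om|u|^2\ge0$ (by $(V_1)$), bound $\int|(\nabla-i\A)u|^2$ below by $\int|\nabla|u||^2$ via the diamagnetic inequality \eqref{A10diamagnetic.inequ}, and minimise the resulting real functional over $\|v\|_2=1$ using the Gagliardo--Nirenberg inequality \eqref{A7GN} and the Pohozaev-type identities \eqref{A9identity}; inserting $v=\frac{\beta}{\sqrt{a^*}}w(\beta\cdot)$ and optimising over $\beta>0$ gives the minimum $-\frac{3-p}{2}\ep_\rho^{-2}$, attained at $\beta=\ep_\rho^{-1}$, which singles out the scale \eqref{A13epk}. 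For the matching upper bound I would test $I(\rho)$ with $u_y(x)=\frac{1}{\ep_\rho\sqrt{a^*}}\,w\!\big(\frac{x-\ep_\rho y}{\ep_\rho}\big)e^{\,i\frac{\Om}{2}(\ep_\rho y)^\perp\cdot x}$, the phase being tuned so that the rotation term contributes only an $O(\ep_\rho^2)$ error; by $(V_2)$, the exponential bound in $(V_1)$, and the decay \eqref{A8wdecay} of $w$, the potential term equals $\frac{\ep_\rho^{s}}{a^*}H(y)\,(1+o(1))$, so optimising over $y$ and using the unique minimiser $y_0$ of $H$ yields
\[
I(\rho)=-\tfrac{3-p}{2}\,\ep_\rho^{-2}+\tfrac{\ep_\rho^{s}}{a^*}\,H(y_0)\,(1+o(1)),\qquad \rho\to\infty .
\]

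\textbf{Step 2: blow-up and compactness.} Let $u_\rho$ be a minimiser, which exists by Theorem~\ref{thm.exist}. Comparing the two bounds of Step~1 forces $\|\nabla|u_\rho|\|_2^2=O(\ep_\rho^{-2})$, $\rho^{p-1}\|u_\rho\|_{p+1}^{p+1}\asymp \ep_\rho^{-2}$ and, crucially, $\int_{\R^2}V_\Om|u_\rho|^2=o(1)$, which together with $(V_1)$ and $\varliminf_{|x|\to\infty}V_\Om(x)/|x|^2>0$ confines the bulk of the mass near $\{V_\Om=0\}=\{0\}$. Choosing points $z_\rho\to0$ through a non-vanishing (concentration-compactness) argument and setting
\[
\bar v_\rho(x):=\ep_\rho\,u_\rho(\ep_\rho x+z_\rho)\,e^{-i\frac{\Om\ep_\rho}{2}z_\rho^\perp\cdot x},
\]
one has $\|\bar v_\rho\|_2=1$ and $\bar v_\rho$ solves a magnetic elliptic equation whose magnetic potential $\frac{\Om\ep_\rho^2}{2}x^\perp$ and rescaled trap $\ep_\rho^2V_\Om(\ep_\rho x+z_\rho)$ tend to $0$ on compacta, with rescaled chemical potential $\ep_\rho^2\mu_\rho$ bounded and negative by \eqref{1.12} (in fact $\ep_\rho^2\mu_\rho\to-1$, as one reads off the limiting equation). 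From the uniform $H^1$ bound, De Giorgi--Nash--Moser applied to the subsolution $|\bar v_\rho|$ gives a uniform $L^\infty$ bound, elliptic bootstrap gives local $C^{1,\al}$ bounds, and a comparison argument on $-\Delta|\bar v_\rho|+\frac12|\bar v_\rho|\le0$ for $|x|$ large (using $\ep_\rho^2\mu_\rho\to-1<0$, $V_\Om\ge0$, and smallness of $|\bar v_\rho|^{p-1}$ outside a fixed ball) yields a uniform exponential decay $|\bar v_\rho(x)|\le Ce^{-|x|/C}$. Hence, along a subsequence, $\bar v_\rho\to v_0$ strongly in $H^1(\R^2,\C)\cap L^\infty(\R^2,\C)$ with $\|v_0\|_2=1$.

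\textbf{Step 3: identification of the limit and conclusion.} Passing to the limit in $\ep_\rho^2E_\rho(u_\rho)\to-\frac{3-p}{2}$ and invoking the sharp constant and equality case of \eqref{A7GN}, the identities \eqref{A9identity}, and the equality case of the diamagnetic inequality \eqref{A10diamagnetic.inequ} (which forces the phase of $\bar v_\rho$ to flatten out), one gets $v_0=e^{i\theta_0}\frac{w(\cdot-z_0)}{\sqrt{a^*}}$ for some $\theta_0\in[0,2\pi)$, $z_0\in\R^2$; the freedom in $z_\rho$ allows $z_0=0$. It remains to pin down $z_\rho/\ep_\rho$: the uniform decay of $\bar v_\rho$ shows $z_\rho/\ep_\rho$ is bounded and gives $\int_{\R^2}V_\Om|u_\rho|^2\ge \frac{\ep_\rho^{s}}{a^*}H(z_\rho/\ep_\rho)(1+o(1))$ by $(V_2)$ and homogeneity, so comparison with the upper bound of Step~1 together with uniqueness of the minimiser $y_0$ of $H$ forces $z_\rho/\ep_\rho\to y_0$. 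Replacing $z_\rho$ by $\ep_\rho y_0$ inside $u_\rho(\ep_\rho\,\cdot+\cdot)$ is legitimate by the uniform decay; the gauge phase $\frac{\Om\ep_\rho}{2}z_\rho^\perp\cdot x$ then becomes $\frac{\ep_\rho^2\Om}{2}x\cdot y_0^\perp$, and absorbing $\theta_0$ (together with the residual $o(1)$ phase) into a suitable $\theta_\rho\in[0,2\pi)$ gives exactly \eqref{A12limibeha}.

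\textbf{Where the difficulty lies.} The genuinely new obstacle, absent in the non-rotational problem, is the control of the phase of the complex-valued minimiser: one must show that every magnetic contribution --- in particular the cross term $\Om\int x^\perp\cdot(iu_\rho,\nabla u_\rho)$ after rescaling and the defect in the diamagnetic inequality --- is negligible not merely to leading order $o(\ep_\rho^{-2})$ but finely enough to render the second-order term $\frac{\ep_\rho^s}{a^*}H(y_0)$ meaningful, which in turn requires the uniform exponential decay of the rescaled minimisers \emph{before} the limiting profile $v_0$ is identified and careful bookkeeping across the $\ep_\rho$-scales. Obtaining the second-order expansion of $I(\rho)$ sharply in both directions, so as to separate $y_0$ from competing near-minimisers of $H$, is the other delicate point; here mass-subcriticality $1<p<3$ is what guarantees compactness uniformly in $\rho$, replacing the critical-mass threshold $\rho^*$ of the $p=3$ theory.
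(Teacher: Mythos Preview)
Your overall architecture matches the paper's: energy comparison with the auxiliary real problem $\hat I(\rho)$, blow-up around a concentration point $z_\rho$, identification of the limit via Gagliardo--Nirenberg and diamagnetic equality cases, then localization of $z_\rho/\ep_\rho$ by matching upper and lower second-order expansions. There is, however, a genuine gap in your localization step, and your closing paragraph in fact flags it without resolving it.

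In Step~3 you bound $I(\rho)-\hat I(\rho)\ge\int_{\R^2} V_\Om|u_\rho|^2$ by applying the \emph{full} diamagnetic inequality. After rescaling this gives $\int V_\Om|u_\rho|^2=\frac{\ep_\rho^{s}}{a^*}H(z_\rho/\ep_\rho)(1+o(1))$, but the matching upper bound from your test function is actually
\[
I(\rho)-\hat I(\rho)\le\frac{\ep_\rho^{s}}{a^*}H(y_0)(1+o(1))+\frac{\Om^2\ep_\rho^2}{4a^*}\int_{\R^2}|x|^2w^2\,dx,
\]
the last term being precisely your ``$O(\ep_\rho^2)$ error'' from the magnetic gauge. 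For $1<s<2$ this error is lower order and your argument closes; but for $s=2$ it is of the \emph{same} order as the potential correction, and you can only conclude $H(z_\rho/\ep_\rho)\le H(y_0)+\frac{\Om^2}{4}\int|x|^2w^2+o(1)$, which does not single out $y_0$. Your Step~1 formula $I(\rho)=-\frac{3-p}{2}\ep_\rho^{-2}+\frac{\ep_\rho^{s}}{a^*}H(y_0)(1+o(1))$ is likewise incorrect when $s=2$.

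The paper circumvents this by \emph{not} using the full diamagnetic inequality in the lower bound. Instead (see \eqref{eqS3.37}--\eqref{eqS3.48}) it keeps the rescaled cross term $\Om\int x^\perp\cdot(iw_\rho,\nabla w_\rho)\,dx$ explicit and proves it is $o(\ep_\rho^{1+s/2})$, hence $o(\ep_\rho^{s})$ for every $1<s\le2$. The mechanism your sketch is missing is this: writing $w_\rho=R_\rho+iI_\rho$ and choosing $\theta_\rho$ so that $\int_{\R^2} w\,I_\rho=0$ (this is the precise role of the ``properly chosen'' phase, cf.\ \eqref{eqS3.11}--\eqref{eqS3.13}), one feeds the energy inequality \eqref{eqS3.39}--\eqref{eqS3.40} into the coercivity of the linearized operator $\mathcal L=-\Delta+1-w^{p-1}$ on $\{w\}^\perp$ to obtain the quantitative bound $\|I_\rho\|_{H^1}\le C\ep_\rho^{1+s/2}$ (eq.\ \eqref{eqS3.46}); then, since $w$ is radial so that $x^\perp\cdot\nabla w\equiv0$, an integration by parts kills the leading part of the cross term (eq.\ \eqref{eqS3.47}). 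With this in hand the lower bound \eqref{eqS3.48} retains the $\frac{\Om^2}{4}\ep_\rho^2\int|x|^2|w_\rho|^2$ term and matches the upper bound \eqref{eqS3.36} in both cases $1<s<2$ and $s=2$, forcing $z_\rho/\ep_\rho\to y_0$. This quantitative control of the imaginary part, not merely the qualitative ``phase flattens out'' from the diamagnetic equality case, is the missing ingredient.
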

Theorem \ref{thm.limibeha} gives a detailed description of the limit behavior of minimizers $u_\rho$ for (\ref{A1cvp}). We shall encounter some new problems in the proof of Theorem \ref{thm.limibeha}. The first problem is that
one cannot use the Gagliardo-Nirenberg inequality directly to establish the optimal energy estimates in $L^2-$subcritical case here. In order to solve this problem, we need to introduce the following new constraint variational problem
\begin{equation}\label{A17Rcvp}
	\hat{I}(\rho):=\inf\limits_{u\in H^1(\R^2,\R),\|u\|^2_2=1}\hat{E}_\rho(u),
\end{equation}
where $\hat{E}_\rho(u)$ is defined by
\begin{equation}\label{A18Ref}
	\hat{E}_\rho(u):=\int_{\R^2}|\nabla u|^{2}dx-\frac{2\rho^{p-1}}{p+1}\int_{\R^2}|u|^{p+1}dx.
\end{equation}
We shall establish a refined energy estimate of $I(\rho)$ in Lemma \ref{lem.energyesti} by analyzing the energy estimate of $\hat{I}(\rho)$ and choosing some suitable test functions.
Another problem is that how to deal with the rotational term  $\Om\int_{\R^2}x^\perp\cdot(iu_\rho,\nabla u_\rho)dx$ in \eqref{A2ef} as $\rho\to\infty$, for which we need to make use of the refined energy estimate of $I(\rho)$ and some properties of the linearized operator \eqref{eqS3.43} in Section 3 below.

Finally, we shall analyze the local uniqueness of minimizers for $I(\rho)$ as $\rho\to\infty$. Assume that the unique global minimum point $y_0$ of $H(y)=\int_{\R^2}h(x+y)w^2(x)dx$ is non-degenerate in the sense that
\begin{equation}\label{nondege}
	\det\Big(\int_{\R^2}\f{\partial h(x+y_0)}{\partial x_j}\f{\partial w^2(x)}{\partial x_l}dx\Big)_{j,l=1,2}\neq0.
\end{equation}
then we have the following result concerning the uniqueness of minimizers.
\vskip 0.2truein
\begin{thm}\label{thm.localuniqueness}
Suppose $V(x)\in C^{1,\alpha}_{loc}(\R^2) (0<\alpha<1)$ satisfies (\ref{A5Vdecondition}), $(V_1)$ and $(V_2)$, and $\Om$ satisfies $0<\Om<\Om^*$, where $\Om^*>0$ is defined as in (\ref{A6Omdedefintion}). Moreover, we assume that
the unique global minimum point $y_0$ of $H(y)=\int_{\R^2}h(x+y)w^2(x)dx$ is  non-degenerate,
then up to the constant phase, there exists a unique complex-valued minimizer for $I(\rho)$ when $\rho>0$ is large enough.
    \end{thm}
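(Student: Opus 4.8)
\textbf{Outline of a proof of Theorem~\ref{thm.localuniqueness}.} The plan is a contradiction argument based on blow-up analysis and the non-degeneracy of the linearized operator. Suppose the conclusion fails: then there exist $\rho_k\to\infty$ and, for each $k$, two minimizers $u_{1,k},u_{2,k}$ of $I(\rho_k)$ with $u_{1,k}\neq e^{i\sigma}u_{2,k}$ for every $\sigma\in\R$. Applying Theorem~\ref{thm.limibeha} to each of them (with the same $y_0$, which is determined by $(V_2)$, and suitable phases $\theta_{i,k}$), the rescaled functions
\[
v_{i,k}(x):=\ep_{\rho_k}u_{i,k}\big(\ep_{\rho_k}(x+y_0)\big)e^{-i\big(\f{\ep^2_{\rho_k}\Om}{2}x\cdot y^\perp_0-\theta_{i,k}\big)},\qquad i=1,2,
\]
both converge to $w/\sqrt{a^*}$ strongly in $H^1(\R^2,\C)\cap L^\infty(\R^2,\C)$, and each solves the rescaled and gauge-transformed version of \eqref{A11udeELE}, in which the rescaled multipliers $\ep^2_{\rho_k}\mu_{\rho_k}\to-1$. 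Since $u_{1,k}$ and $u_{2,k}$ differ by more than a constant phase, $\xi_k:=v_{1,k}-v_{2,k}\not\equiv0$, so we may normalize $\bar\xi_k:=\xi_k/\|\xi_k\|_{L^\infty}$, $\|\bar\xi_k\|_{L^\infty}=1$. Using \eqref{1.12} together with the refined energy estimate of $I(\rho)$ (Lemma~\ref{lem.energyesti}) one controls $|\mu_{1,k}-\mu_{2,k}|$ by $\|\xi_k\|_{L^\infty}$ times a vanishing factor.

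Next I would subtract the two rescaled Euler--Lagrange equations and divide by $\|\xi_k\|_{L^\infty}$ to obtain a linear equation for $\bar\xi_k$, whose coefficients involve $v_{1,k},v_{2,k}$, the rescaled potential $\ep^2_{\rho_k}V_\Om\big(\ep_{\rho_k}(\cdot+y_0)\big)$, the rescaled rotation term, and the controlled multiplier difference. Splitting $\bar\xi_k=\bar\xi_k^{(1)}+i\bar\xi_k^{(2)}$ into real and imaginary parts and invoking the refined estimates on the imaginary parts of the minimizers obtained from the properties of the linearized operator \eqref{eqS3.43} in Section~3, one shows that $\bar\xi_k^{(2)}$ is of lower order, so that the mass of the normalized difference is carried by $\bar\xi_k^{(1)}$. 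Elliptic estimates then give $\bar\xi_k\to\xi_0$ in $C^1_{loc}$ (along a subsequence) with $\xi_0$ real-valued and $\mathcal{L}\xi_0=0$, where $\mathcal{L}:=-\Delta+1-pw^{p-1}$. By the well-known non-degeneracy of $w$ (see \cite{K,WMI}), $\ker\mathcal{L}$ is spanned by $\partial_{x_1}w,\partial_{x_2}w$, so $\xi_0=b_1\partial_{x_1}w+b_2\partial_{x_2}w$; here the $L^2$ constraint $\|v_{i,k}\|_2^2=1$, subtracted and passed to the limit, yields $\int_{\R^2}w\,\xi_0\,dx=0$ and rules out a component of $\xi_0$ along $w$.

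The heart of the proof is to show $b_1=b_2=0$. For this I would use local Pohozaev-type identities: multiply the equation for $\xi_k$ by $\partial_{x_j}(v_{1,k}+v_{2,k})$ and by $(v_{1,k}+v_{2,k})$, integrate over expanding balls with boundary terms estimated through the exponential decay \eqref{A8wdecay}, and subtract to kill the highest-order terms. Using the expansions of $V_\Om$ near the origin in $(V_2)$--\eqref{A20-1} and the homogeneity of $h$, the leading surviving contribution produces the linear system
\[
\sum_{l=1}^{2}\Big(\int_{\R^2}\f{\partial h(x+y_0)}{\partial x_j}\f{\partial w^2(x)}{\partial x_l}\,dx\Big)b_l=0,\qquad j=1,2,
\]
whose matrix is nonsingular precisely by the non-degeneracy hypothesis \eqref{nondege}; hence $b_1=b_2=0$ and $\xi_0\equiv0$. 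To contradict $\|\bar\xi_k\|_{L^\infty}=1$, one finally shows that this supremum is attained in a fixed bounded region: away from a large ball the rescaled potential dominates and $\bar\xi_k$ satisfies $-\Delta\bar\xi_k+\tfrac12\bar\xi_k\ge0$, so the maximum principle forces $\bar\xi_k\to0$ uniformly near infinity, and combined with $\bar\xi_k\to\xi_0$ in $C^1_{loc}$ this gives $\|\xi_0\|_{L^\infty}=1$, contradicting $\xi_0\equiv0$.

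\textbf{Main obstacles.} I expect the principal difficulties to be twofold. First, the rigorous control of the imaginary parts of $u_{i,k}$ and of $\xi_k$: the imaginary component of the complex linearized operator has $w$ in its kernel, so one must use the structure of minimizers (the magnetic coupling being of order $\ep^2_{\rho_k}$) to prove that $\bar\xi_k^{(2)}$ is genuinely of lower order and the limiting problem collapses onto the real scalar operator $\mathcal{L}$. Second, making the local Pohozaev computation precise enough to isolate exactly the matrix in \eqref{nondege}: this demands sharp (weighted) error estimates on $v_{i,k}-w/\sqrt{a^*}$ and a careful argument that the rescaled rotation term contributes negligibly to the Pohozaev identities compared with the potential term. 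The $L^2$-subcritical range $1<p<3$ (rather than $p=3$) additionally forces the energy expansions to be routed through the auxiliary problem $\hat I(\rho)$ of \eqref{A17Rcvp}, which complicates the bookkeeping throughout.
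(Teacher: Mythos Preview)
Your overall architecture matches the paper's: contradiction, normalize the difference in $L^\infty$, pass to a limit in the kernel of the linearized operator, and use Pohozaev identities together with \eqref{nondege} to force the limit to vanish. There is, however, a real gap in your derivation of the limit equation.

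The claim that $|\mu_{1,k}-\mu_{2,k}|$ is $\|\xi_k\|_{L^\infty}$ times a \emph{vanishing} factor is incorrect. From \eqref{eqS4.17},
\[
\frac{\ep_{\rho_k}^{2}(\mu_{2,k}-\mu_{1,k})}{\|\xi_k\|_{L^\infty}}
=-\frac{(p-1)(a^*)^{\frac{p-1}{2}}}{p+1}\int_{\R^2}\frac{|\tilde u_{2,k}|^{p+1}-|\tilde u_{1,k}|^{p+1}}{\|\xi_k\|_{L^\infty}}\,dx,
\]
and the right-hand side converges to a constant proportional to $\int_{\R^2} w^{p}\xi_0\,dx$; it is $O(1)$, not $o(1)$. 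Consequently the limit equation for the real part is \emph{inhomogeneous}: in the paper's notation one gets $\mathcal{N}\tilde\eta_1=-\big(\frac{p-1}{a^*}\int_{\R^2} w^p\tilde\eta_1\,dx\big)w$ (see \eqref{eqS4.54}), so that besides $\partial_{x_1}w,\partial_{x_2}w$ the limit may carry a third direction $b_0\big(w+\frac{p-1}{2}x\cdot\nabla w\big)$, cf.\ \eqref{eqS4.57}. An extra step is therefore required to eliminate $b_0$. The paper does this via a second Pohozaev identity, multiplying \eqref{eqS4.76} by $x\cdot\nabla\tilde{\mathcal R}_{j\rho}$ and subtracting (Step~2 of the proof), which yields $\int_{\R^2} w^p\tilde\eta_1\,dx=0$ and hence $b_0=0$. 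In fact your $L^2$-constraint observation could do the same job---subtracting $\|v_{i,k}\|_2^2=1$ gives $\int_{\R^2} w\,\xi_0\,dx=0$, and since $\int_{\R^2} w\big(w+\frac{p-1}{2}x\cdot\nabla w\big)dx=\frac{3-p}{2}a^*\neq 0$ for $1<p<3$, this also forces $b_0=0$---but you phrase it as ruling out ``a component along $w$,'' which is not what is at stake (no pure $w$ component can arise under either equation). After $b_0=0$, your Pohozaev system for $(b_1,b_2)$ coincides with the paper's \eqref{eqS4.73}.

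A smaller point: the endgame inequality $-\Delta\bar\xi_k+\tfrac12\bar\xi_k\ge 0$ is not meaningful for complex $\bar\xi_k$. The paper instead proves uniform exponential decay of $|\tilde\eta_\rho|$ (Lemma~\ref{lem 4.4}) by working with $|\tilde\eta_\rho|^2$ and the diamagnetic inequality; this is what guarantees the $L^\infty$-norm is attained on a fixed compact set and delivers the contradiction.
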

We remark that the local uniqueness, up to a constant phase,  of Theorem \ref{thm.localuniqueness} holds in the following sense: there exists a minimizer  $U_\rho$ of $I(\rho)$ such that any minimizer $u_\rho$ of $I(\rho)$ satisfies $u_\rho\equiv U_\rho e^{i\theta_\rho}$ in $\R^2$ for $\rho>0$ large enough, where $\theta _\rho\in[0,2\pi)$ is a suitable constant phase depending on $\rho$ and $u_\rho$.

The main method of proving Theorem \ref{thm.localuniqueness} is inspired by \cite{CLL,DLY,Gi,GLW,GLP} and the references therein, but there still have some essential  difficulties occur in our proof.  Indeed, note that the Pohozaev identities play a crucial role on the process of the proof of local uniqueness \cite{CLL,DLY,Gi,GLW}. However, due to the appearance of the rotation term, the Euler-Lagrange equation \eqref{A11udeELE} of minimizers $u_\rho$ is essentially a coupled system of the real  and imaginary parts of $u_\rho$. Therefore, the first difficulty is that one cannot obtain the Pohozaev identities directly for the complex-valued minimizers of $I(\rho)$. To overcome this difficulty, we
shall construct various Pohozaev identities for the real part of the complex-valued minimizers and analyze  all terms produced by the
rotation.  The second difficulty is that those analysis in \cite{GLP}, which focus on the special case $p=4$, can not generalize to the proof of Theorem \ref{thm.localuniqueness} due to the rational of $1<p<3$. Therefore, we need to deal with the nonlinear term $|u_\rho|^{p-1}u_\rho$ more carefully. Besides, we shall use some technical expansions of the nonlinear term in \cite{CPY} flexibly  to obtain a desired estimates in our proof.


This paper is organized as follows. Section 2 is devoted to proving Theorem \ref{thm.exist} on the existence and nonexistence of minimizers for $I(\rho)$. In Section 3, we shall prove Theorem \ref{thm.limibeha} on the limit behavior of minimizers for $I(\rho)$ as $\rho\to\infty$ by employing energy methods and blow-up analysis. By deriving various Pohozaev identities, we shall complete the proof of the local uniqueness of minimizers in Section 4.

\section{Existence of minimizers for $I(\rho)$}
This section is concerned with the proof of Theorem \ref{thm.exist} on the existence and nonexistence of minimizers for $I(\rho)$. 
We first introduce the following compactness lemma.
    \begin{lem}\label{lem.com}
    Suppose that $V(x)\in L^{\infty}_{loc}(\R^2)$ and $\lim\limits_{|x|\to\infty}V(x)=\infty$. If $2\leq q<\infty$, then the embedding $\H\hookrightarrow L^q(\R^2,\C)$ is compact.
    \end{lem}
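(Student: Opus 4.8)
The plan is to prove compactness via the classical decomposition of $\R^2$ into a bounded region, where the Rellich--Kondrachov theorem applies, and a complementary tail region, where the confining condition $\lim_{|x|\to\infty}V(x)=\infty$ forces uniform decay. Let $\{u_n\}\subset\H$ be bounded, say $\|u_n\|_\H\le C$. Since $V$ is bounded below (being locally bounded and divergent at infinity), $\H$ is a Hilbert space that embeds continuously into $H^1(\R^2,\C)$; hence, after passing to a subsequence, $u_n\rightharpoonup u$ weakly in $\H$ and in $H^1(\R^2,\C)$, with $u\in\H$. It suffices to prove $u_n\to u$ strongly in $L^q(\R^2,\C)$.

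First I would record the local convergence. For each fixed $R>0$ the embedding $H^1(B_R)\hookrightarrow L^q(B_R)$ is compact for every $2\le q<\infty$ in dimension two, so the weak $H^1$-convergence yields $u_n\to u$ strongly in $L^q(B_R)$ for every $R>0$.

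The decisive step is to show that the tails $\int_{|x|>R}|u_n|^q\,dx$ are small uniformly in $n$. Put $m(R):=\inf_{|x|>R}V(x)$, so that $m(R)\to\infty$ by hypothesis. For $q=2$ the bound on $\int V|u_n|^2$ gives at once
\[\int_{|x|>R}|u_n|^2\,dx\le\frac{1}{m(R)}\int_{|x|>R}V(x)|u_n|^2\,dx\le\frac{C^2}{m(R)}\longrightarrow0\]
uniformly in $n$ as $R\to\infty$. For general $q\in(2,\infty)$ I would interpolate: fixing any $r>q$, the boundedness of $\{u_n\}$ in $H^1(\R^2)$ gives a uniform bound $\|u_n\|_r\le C_r$ by the two-dimensional Sobolev embedding, and writing $\frac1q=\frac\theta2+\frac{1-\theta}{r}$ with $\theta\in(0,1)$, H\"older's inequality yields
\[\Big(\int_{|x|>R}|u_n|^q\,dx\Big)^{1/q}\le\Big(\int_{|x|>R}|u_n|^2\,dx\Big)^{\theta/2}\Big(\int_{|x|>R}|u_n|^r\,dx\Big)^{(1-\theta)/r}\le\Big(\frac{C^2}{m(R)}\Big)^{\theta/2}C_r^{\,1-\theta},\]
which tends to $0$ uniformly in $n$ as $R\to\infty$; the same estimate holds for the limit $u$.

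Finally I would combine the two ingredients in the usual way. Given $\varepsilon>0$, first choose $R$ so large that $\|u_n\|_{L^q(|x|>R)}<\varepsilon$ for all $n$ and $\|u\|_{L^q(|x|>R)}<\varepsilon$, using the uniform tail estimate; then choose $n$ large so that $\|u_n-u\|_{L^q(B_R)}<\varepsilon$ by the strong local convergence. The triangle inequality gives $\|u_n-u\|_{L^q(\R^2)}\le 3\varepsilon$ for all large $n$, which proves the strong $L^q$-convergence and hence the compactness of $\H\hookrightarrow L^q(\R^2,\C)$. I expect the tail control to be the only real obstacle; the key point is that the divergence $V\to\infty$ upgrades the uniform $\H$-bound into uniform $L^2$-decay at infinity, and interpolation against the uniform $L^r$-bound then propagates this decay to every exponent $q\in[2,\infty)$.
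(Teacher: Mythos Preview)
Your argument is correct and is exactly the standard proof of this compactness result. The paper does not give its own proof of this lemma; it simply remarks that the proof is similar to that in \cite{ZJ} and omits the details, and the approach in that reference is precisely the one you carry out here (uniform $L^2$-tail control from $V(x)\to\infty$, Rellich--Kondrachov on balls, and interpolation for $q>2$).
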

The proof of this lemma is similar to those in \cite{ZJ} and the references therein, so we omit it here.
   \qed
\vskip 0.2truein
\noindent{\bf{Proof of Theorem \ref{thm.exist}:}} \ Since the proof of Theorem \ref{thm.exist} is overall similar to those in \cite [Theorem 1.1] {GLY}, we only give the main idea of its proof here.

1.  For any $\rho\in(0,\infty)$ and $0<\Om<\Om^*$. Suppose that $u\in\H$ and $\|u\|^2_2=1$. Applying the Gagliardo-Nirenberg inequality (\ref{A7GN}) and the diamagnetic inequality (\ref{A10diamagnetic.inequ}), we deduce that that there exist sufficiently large $R>0$ and $C(\Om,\rho,R)>0$ such that for any $\rho\in(0,\infty)$ and $0<\Om<\Om^*$,
      \begin{equation}\label{11.19}
      E_\rho(u)\geq\f{1}{2}\int_{\R^2}|\nabla u|^2dx-C(\Om,\rho,R),
      \end{equation}
which implies that $E_\rho(u)$ is bounded from below. Let $\{u_n\}\in\H$ be a minimizing sequence of $I(\rho)$ satisfying $\|u_n\|^2_2=1$ and $\lim\limits_{n\to\infty}E_\rho(u_n)=I(\rho)$. It then follows from \eqref{11.19} that the sequence $\{u_n\}$ is bounded uniformly in $\H$. By the compact embedding in Lemma \ref{lem.com}, there exist a subsequence $\{u_{nk}\}$ of $\{u_n\}$ and $u_0\in\H$ such that
      \begin{equation*}
      u_{nk}\rightharpoonup u_0 \,\ \text{weakly in} \,\ \H, \,\ u_{nk}\to u_0 \,\ \text{strongly in} \ L^q(\R^2,\C)\,\ (2\leq q<\infty).
      \end{equation*}
Thus we conclude from above convergence and the weak lower semicontinuity that $\|u_0\|^2_2=1$ and $I(\rho)=E_\rho(u_0)$, $i.e.$, $u_0$ is a minimizer of $I(\rho)$. This implies that for any $\rho\in(0,\infty)$ and $0<\Om<\Om^*$, there exists at least one minimizer of $I(\rho)$.

2. For any $\rho\in(0,\infty)$ and $\Om>\Om^*$. Let $w=w(|x|)$ be the unique positive solution of (\ref{A4wdeequation}). For any $\tau>0$, choose a trail function
     \begin{equation}\label{eqS2.10}
     w_{\tau}(x):=\f{A_\tau\tau}{\|w\|_2}w\big(\tau (x-x_\tau)\big)\var(x-x_\tau)e^{i\Om S(x)},
     \end{equation}
where $A_{\tau}>0$ is chosen such that $\|w_\tau\|^2_2=1$, $x_\tau\in\R^2$ is chosen such that  $V_\Om(x_\tau)\leq-2\f{p-1}{p+1}\f{\tau^2\|w\|^{p+1}_{p+1}}{\|w\|^2_2}$, $S(x)=\f{1}{2}x\cdot x^\perp_\tau$, and $\var(x)\in C^\infty_0(\R^2)$ is a
cut-off function satisfying $\var(x)=1$ if $|x|\leq1$; $\var(x)=0$ if $|x|\geq2$; $\var(x)\in(0,1)$ if $1<|x|<2$.

Using the exponential decay of $w$ in (\ref{A8wdecay}) and the identity (\ref{A9identity}), direct calculations give that $E_\rho(w_\tau)\leq-\infty$ as $\tau\to\infty$, which implies that $I(\rho)$ is unbounded from below, and so $I(\rho)$ admits no minimizer for any $\rho\in(0,\infty)$ and $\Om>\Om^*$. This then completes the proof of Theorem \ref{thm.exist}.
\qed
\section{Mass concentration as $\rho\to\infty$}
This section is devoted to proving Theorem \ref{thm.limibeha} on the limit behavior of minimizers for $I(\rho)$ as $\rho\to\infty$. We shall first establish the energy estimates of $I(\rho)$ in Lemma \ref{lem.energyesti} and then present a detailed analysis on the limit behavior of minimizers for $I(\rho)$ as $\rho\to\infty$. Based on these energy estimates and analysis, we finally complete the proof of Theorem \ref{thm.limibeha}.
\subsection{Energy estimates of $I(\rho)$}
We recall the following energy estimates of $\hat{I}(\rho)$ defined in (\ref{A17Rcvp}) as $\rho\to\infty$.
    \begin{lem}[{\cite [Lemma A.3]{LZ}}]\label{lemma A.3}
Let $\hat{u}_\rho$ be a nonnegative minimizer of $\hat{I}(\rho)$ defined in (\ref{A17Rcvp}). Set $a^*:=\|w\|^2_2$, where $w$ is the unique positive solution of (\ref{A4wdeequation}). Then we have
     \begin{equation}\label{ARefesti}
     \hat{I}(\rho)=-\f{3-p}{2}\ep^{-2}_\rho,
     \end{equation}
and there exists a $x_0\in\R^2$ such that
     \begin{equation}\label{ARmini}
     \hat{u}_\rho(x)=\f{1}{\sqrt{a^*}}\ep^{-1}_\rho w(\ep^{-1}_\rho x+x_0),
     \end{equation}
where $\ep_\rho$ is defined in (\ref{A13epk}).
     \end{lem}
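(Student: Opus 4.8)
The plan is to prove both assertions of the lemma at once by squeezing $\hat I(\rho)$ between matching lower and upper bounds, and then reading off the precise form of the minimizer from the equality case. The two essential inputs are the sharp Gagliardo--Nirenberg inequality (\ref{A7GN}), whose optimizer is exactly $w$, and the Pohozaev-type identities (\ref{A9identity}) relating $\|\nabla w\|_2^2$, $\int_{\R^2}w^{p+1}\,dx$ and $\int_{\R^2}w^2\,dx$. Throughout I write $\beta:=\f{p-1}{2}$, so that $0<\beta<1$ precisely because $1<p<3$; this is what guarantees that the one-variable optimization problems below have an interior minimum.

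First I would prove the lower bound. For any admissible $u\in H^1(\R^2,\R)$ with $\|u\|_2^2=1$, applying (\ref{A7GN}) to the nonlinear term in (\ref{A18Ref}) gives
\[
\hat E_\rho(u)\geq t-Kt^{\beta},\qquad t:=\|\nabla u\|_2^2\geq0,
\]
where $K=\f{2\rho^{p-1}}{p+1}\cdot\f{\f{p+1}{2}(\f{2}{p-1})^{\beta}}{(a^*)^{\beta}}$ collects the explicit Gagliardo--Nirenberg constant together with $\|w\|_2^{p-1}=(a^*)^{\beta}$. Since $0<\beta<1$, the function $t\mapsto t-Kt^{\beta}$ is bounded below on $[0,\infty)$ and attains its minimum at $t^*=(K\beta)^{1/(1-\beta)}$, with minimum value $t^*\cdot\f{\beta-1}{\beta}$. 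Simplifying the explicit constants, using $1-\beta=\f{3-p}{2}$ and the definition (\ref{A13epk}) of $\ep_\rho$, reduces $t^*$ to $\f{p-1}{2}\ep_\rho^{-2}$ and hence the minimum value to $-\f{3-p}{2}\ep_\rho^{-2}$. This yields $\hat I(\rho)\geq-\f{3-p}{2}\ep_\rho^{-2}$.

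Next I would establish the reverse inequality together with the explicit minimizer, by testing with a one-parameter family of rescalings of $w$. For $\lambda>0$ set $u_\lambda(x):=\f{\lambda}{\sqrt{a^*}}w(\lambda x)$, which satisfies $\|u_\lambda\|_2^2=1$ by the choice $a^*=\|w\|_2^2$. A change of variables gives $\|\nabla u_\lambda\|_2^2=\f{\lambda^2}{a^*}\|\nabla w\|_2^2$ and $\int_{\R^2}|u_\lambda|^{p+1}\,dx=\f{\lambda^{p-1}}{(a^*)^{(p+1)/2}}\int_{\R^2}w^{p+1}\,dx$; substituting the identities (\ref{A9identity}), namely $\|\nabla w\|_2^2=\f{p-1}{2}a^*$ and $\int_{\R^2}w^{p+1}\,dx=\f{p+1}{2}a^*$, reduces the energy to $\hat E_\rho(u_\lambda)=\f{p-1}{2}\lambda^2-\f{\rho^{p-1}}{(a^*)^{\beta}}\lambda^{p-1}$. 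Minimizing this over $\lambda>0$ forces $\lambda^{3-p}=\f{\rho^{p-1}}{(a^*)^{\beta}}$, which by (\ref{A13epk}) is exactly $\lambda=\ep_\rho^{-1}$; evaluating at this point and using $\f{\rho^{p-1}}{(a^*)^{\beta}}\lambda^{p-1}=\lambda^2$ gives $\hat E_\rho(u_{\ep_\rho^{-1}})=-\f{3-p}{2}\ep_\rho^{-2}$. Combined with the lower bound this shows $\hat I(\rho)=-\f{3-p}{2}\ep_\rho^{-2}$ and that $u_{\ep_\rho^{-1}}$ is a minimizer, which is the claimed form (\ref{ARmini}) with $x_0=0$.

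Finally, to recover the full family of minimizers I would analyze the equality case. A nonnegative minimizer $\hat u_\rho$ must turn both inequalities above into equalities: equality in (\ref{A7GN}), and $\|\nabla\hat u_\rho\|_2^2=t^*=\f{p-1}{2}\ep_\rho^{-2}$. The characterization of optimizers of the sharp Gagliardo--Nirenberg inequality forces $\hat u_\rho$ to be, up to translation and dilation, a scalar multiple of $w$; the $L^2$-constraint fixes the multiple, and together with the pinned value of the gradient norm this fixes the dilation to be $\ep_\rho^{-1}$, leaving only the translation $x_0\in\R^2$ free. This produces exactly (\ref{ARmini}). \emph{The main obstacle} is this last equality-characterization step: it rests on the fact that equality in (\ref{A7GN}) holds only for translates and dilates of $w$, which is in turn equivalent to the uniqueness (up to translation) of the positive radial solution of (\ref{A4wdeequation}); the remaining computations are scaling identities and elementary single-variable calculus made exact by the identities (\ref{A9identity}).
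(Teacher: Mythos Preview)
Your proof is correct and self-contained. Note, however, that the paper does not actually supply a proof of this lemma: it is quoted verbatim as \cite[Lemma~A.3]{LZ} and used as a black box, so there is no ``paper's own proof'' to compare against. Your argument---sharp Gagliardo--Nirenberg for the lower bound, rescaled test functions $u_\lambda=\f{\lambda}{\sqrt{a^*}}w(\lambda\cdot)$ for the matching upper bound, and the equality characterization of (\ref{A7GN}) to pin down the minimizer---is the natural one and almost certainly coincides with the original proof in \cite{LZ}. The computations check out: with $\beta=\f{p-1}{2}$ one finds $t^*=(K\beta)^{1/(1-\beta)}=\f{p-1}{2}\ep_\rho^{-2}$ and minimum value $-\f{3-p}{2}\ep_\rho^{-2}$, and the optimal scale $\lambda=\ep_\rho^{-1}$ on the trial side reproduces the same number; the equality case then leaves only the translation $x_0$ free, matching (\ref{ARmini}) after the harmless relabeling $x_0\leftrightarrow -\ep_\rho^{-1}x_0$.
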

Based on Lemma \ref{lemma A.3}, we have the following energy estimates of $I(\rho)$.
     \begin{lem}\label{lem.energyesti}
Suppose that $V(x)$ satisfies (\ref{A5Vdecondition}), $(V_1)$ and assume $0<\Om<\Om^*$, where $\Om^*>0$ is defined in (\ref{A6Omdedefintion}). Let $u_\rho$ be a minimizer of $I(\rho)$ for any $0<\rho<\infty$. Then we have
     \begin{equation}\label{energyesti}
     0\leq I(\rho)-\hat I(\rho)\to0\,\ \text{as} \,\ \rho\to\infty,
     \end{equation}
and
     \begin{equation}\label{Vto0}
     \int_{\R^2}V_\Om(x)|u_\rho|^2dx\to0 \,\ \text{as} \,\ \rho\to\infty.
     \end{equation}
     \end{lem}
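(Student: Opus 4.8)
The two assertions will follow quickly once the energy is rewritten via the diamagnetic identity and combined with the explicit form of $\hat I(\rho)$ from Lemma~\ref{lemma A.3}. The plan is as follows. First I would record that, for $\A=\f{\Om}{2}x^\perp$ and any $u\in\H$ with $\|u\|_2^2=1$, the identity in \eqref{A10diamagnetic.inequ} gives
\begin{equation*}
E_\rho(u)=\int_{\R^2}|(\nabla-i\A)u|^2\,dx+\int_{\R^2}V_\Om(x)|u|^2\,dx-\f{2\rho^{p-1}}{p+1}\int_{\R^2}|u|^{p+1}\,dx .
\end{equation*}
Since $|(\nabla-i\A)u|^2\ge\big|\nabla|u|\big|^2$, $|u|^{p+1}=\big||u|\big|^{p+1}$, and $V_\Om\ge0$ by $(V_1)$, this yields
\begin{equation*}
E_\rho(u)\ge\hat E_\rho(|u|)+\int_{\R^2}V_\Om(x)|u|^2\,dx\ge\hat I(\rho),
\end{equation*}
and passing to the infimum over admissible $u$ gives $I(\rho)\ge\hat I(\rho)$, which is the left inequality of \eqref{energyesti}.

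For the matching upper bound I would plug in the explicit minimizer of $\hat I(\rho)$ supplied by Lemma~\ref{lemma A.3}, recentred at the origin (still a minimizer since $\hat E_\rho$ is translation invariant): set $\phi_\rho(x):=\f{1}{\sqrt{a^*}}\ep_\rho^{-1}w(\ep_\rho^{-1}x)$, so that $\|\phi_\rho\|_2^2=1$ and $\hat E_\rho(\phi_\rho)=\hat I(\rho)$. As $\phi_\rho$ is real-valued the rotational term in $E_\rho$ drops out, hence
\begin{equation*}
I(\rho)\le E_\rho(\phi_\rho)=\hat I(\rho)+\int_{\R^2}V_\Om(x)|\phi_\rho|^2\,dx+\f{\Om^2}{4}\int_{\R^2}|x|^2|\phi_\rho|^2\,dx .
\end{equation*}
Scaling $y=\ep_\rho^{-1}x$ turns the last term into $\f{\Om^2\ep_\rho^2}{4a^*}\int_{\R^2}|y|^2w^2\,dy=O(\ep_\rho^2)$ by \eqref{A8wdecay}, and the middle term into $\f1{a^*}\int_{\R^2}V_\Om(\ep_\rho y)w^2(y)\,dy$, which tends to $0$ by dominated convergence: $V_\Om(\ep_\rho y)\to V_\Om(0)=0$ pointwise (using $\{V_\Om=0\}=\{0\}$ from $(V_1)$), while for $\rho$ large enough that $\kappa\ep_\rho\le1$ the growth bound in $(V_1)$ gives $V_\Om(\ep_\rho y)\le Ce^{\kappa\ep_\rho|y|}\le Ce^{|y|}$, which against $w^2(y)=O(|y|^{-1}e^{-2|y|})$ from \eqref{A8wdecay} furnishes a $\rho$-independent integrable majorant (the same bound also shows $\phi_\rho\in\H$). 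Combining the two estimates yields $0\le I(\rho)-\hat I(\rho)\le\f1{a^*}\int_{\R^2}V_\Om(\ep_\rho y)w^2(y)\,dy+O(\ep_\rho^2)\to0$, which proves \eqref{energyesti}.

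It then remains to obtain \eqref{Vto0}, which is now immediate: applying the lower bound displayed in the first paragraph to $u=u_\rho$ itself and using $E_\rho(u_\rho)=I(\rho)$ gives
\begin{equation*}
0\le\int_{\R^2}V_\Om(x)|u_\rho|^2\,dx\le I(\rho)-\hat I(\rho)\longrightarrow 0\quad\text{as }\rho\to\infty .
\end{equation*}
I expect the only genuine obstacle to be the dominated-convergence step in the upper bound: the potential $V_\Om$ is a priori merely locally bounded and may grow (sub-)exponentially, so one must check that its evaluation at the shrinking scale $\ep_\rho$ stays uniformly integrable against $w^2$. This is precisely what the exponential growth bound in $(V_1)$, paired with the exponential decay \eqref{A8wdecay} of $w$, is tailored to guarantee, and it is the reason $(V_1)$ — rather than only \eqref{A5Vdecondition} — is imposed in this lemma.
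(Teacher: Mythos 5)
Your argument is correct and follows the paper's proof essentially verbatim: the lower bound $I(\rho)\ge\hat I(\rho)$ comes from the diamagnetic inequality together with $V_\Om\ge 0$, the matching upper bound from testing $E_\rho$ with a rescaled copy of $w$, and \eqref{Vto0} is read off from the same chain of inequalities applied to $u_\rho$ itself. The only (harmless) difference is in the trial function for the upper bound: the paper truncates the rescaled $w$ with a cut-off so that the potential is integrated over a compact set, whereas you use the untruncated exact minimizer of $\hat I(\rho)$ and instead pair the exponential growth bound in $(V_1)$ with the decay \eqref{A8wdecay} to dominate $\int_{\R^2}V_\Om|\phi_\rho|^2dx$ — both routes yield the required $o(1)$, and both implicitly use that $V_\Om$ vanishes continuously at the origin.
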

     \begin{proof}
By (\ref{A2ef}), the diamagnetic inequality (\ref{A10diamagnetic.inequ}), (\ref{A17Rcvp}) and $(V_1)$, we have
     \begin{equation}\label{lowerboundesti}
      \begin{split}
      I(\rho)
      &=\int_{\R^2}|(\nabla-i\A)u_\rho|^2dx+\int_{\R^2}V_\Om(x)|u_\rho|^2dx-\f{2\rho^{p-1}}{p+1}\int_{\R^2}|u_\rho|^{p+1}dx\\
      &\geq \int_{\R^2}\big|\nabla|u_\rho|\big|^2dx-\f{2\rho^{p-1}}{p+1}\int_{\R^2}|u_\rho|^{p+1}dx\\
      &\geq \hat{I}(\rho)\quad \text{as} \ \rho\to\infty.
     \end{split}
     \end{equation}
Taking a test function $w_\tau$ of the form (\ref{eqS2.10}) with $S(x)\equiv 0$, $x_\tau=0$ and $\tau=\Big(\f{\rho}{\sqrt{a^*}}\Big)^{\f{p-1}{3-p}}$. Applying the exponential decay of $w$ in (\ref{A8wdecay}) and the identity (\ref{A9identity}), direct calculations give
     \begin{equation}\label{supperboundesti}
      \begin{split}
      I(\rho)\leq E_\rho(w_\tau)
      &\leq \f{p-1}{2}\tau^2-\Big(\f{\rho}{\sqrt{a^*}}\Big)^{p-1}\tau^{p-1}+C\tau^{-2}\\
      &=\hat I(\rho)+o(1) \,\ \text{as} \,\ \rho\to\infty.
       \end{split}
     \end{equation}
Then (\ref{energyesti}) follows from (\ref{lowerboundesti}) and (\ref{supperboundesti}).

Now, we shall prove (\ref{Vto0}). Combining (\ref{A2ef}), the diamagnetic inequality (\ref{A10diamagnetic.inequ}), (\ref{A17Rcvp}) and (\ref{energyesti}), one then deduce that
     \begin{equation}\label{1eqS3.8}
     \begin{split}
     0\leq\int_{\R^2}V_\Om(x)|u_\rho|^2dx
     &=I(\rho)-\int_{\R^2}|(\nabla-i\A)u_\rho|^2dx+\f{2\rho^{p-1}}{p+1}\int_{\R^2}|u_\rho|^{p+1}dx\\
     &\leq I(\rho)-\Big(\int_{\R^2}\big|\nabla|u_\rho|\big|^2dx-\f{2\rho^{p-1}}{p+1}\int_{\R^2}|u_\rho|^{p+1}dx\Big)\\
     &=I(\rho)-\hat{E}_\rho(|u_\rho|)\\
     &\leq I(\rho)-\hat{I}(\rho)\to0 \,\ \text{as} \,\ \rho\to\infty,
     \end{split}
     \end{equation}
which implies (\ref{Vto0}). This completes the proof of Lemma \ref{lem.energyesti}.
\end{proof}
\subsection{Blowup analysis}
The main purpose of this subsection is to present a detailed analysis on the limit behavior of minimizers for $I(\rho)$ as $\rho\to\infty$. Towards this aim, we first give the following lemma on the refined estimates of minimizers $u_\rho$ and its Lagrange multiplier $\mu_\rho$ as $\rho\to\infty$.
     \begin{lem}\label{lem.important}
Suppose that $V(x)$ satisfies (\ref{A5Vdecondition}), $(V_1)$ and assume $0<\Om<\Om^*$, where $\Om^*>0$ is defined in (\ref{A6Omdedefintion}). Let $u_\rho$ be a minimizer of $I(\rho)$, then we have
\begin{enumerate}
\item Define
     \begin{equation}\label{eqS3.8}
      w_\rho(x):=\ep_\rho u_\rho(\ep_\rho x+z_\rho)e^{-i\big(\f{\ep_\rho\Om}{2}x\cdot z^\perp_\rho-\theta_\rho\big)},
     \end{equation}
where $\ep_\rho$ is defined by (\ref{A13epk}), $z_\rho$ is a global maximum point of $|u_\rho|$ and $\theta_\rho\in[0,2\pi)$ is a proper constant. Then there exists a constant $\eta>0$, independent of $0<\rho<\infty$, such that
     \begin{equation}\label{eqS3.8-1}
     \int_{B_2(0)}|w_\rho(x)|^2dx\geq\eta>0 \,\ \text{as} \,\ \rho\to\infty.
     \end{equation}
\item $w_\rho$ satisfies
     \begin{equation}\label{eqS3.9}
     \lim\limits_{\rho\to\infty}w_\rho(x)=\f{w(x)}{\sqrt{a^*}} \,\ \text{strongly in} \,\ H^{1}(\R^2,\C),
     \end{equation}
where $a^{\ast}:=\|w\|^{2}_{2}$ and $w$ is the unique positive solution of (\ref{A4wdeequation}). Furthermore, any global maximal point $z_\rho$ of $|u_\rho|$ satisfies $\lim\limits_{\rho\to\infty}V_\Om(z_\rho)=0$.
\item $\mu_\rho\epsilon^2_\rho\to-1$ as $\rho\to\infty$.
\end{enumerate}
     \end{lem}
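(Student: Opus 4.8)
\textbf{Proof proposal for Lemma \ref{lem.important}.}

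The plan is to run a standard blow-up / concentration-compactness argument around a global maximum point of $|u_\rho|$, using the energy estimate \eqref{energyesti} as the driving input. First I would establish a preliminary $H^1$ bound: from $I(\rho)=\hat I(\rho)+o(1)=-\tfrac{3-p}{2}\ep_\rho^{-2}+o(\ep_\rho^{-2})$ together with the diamagnetic inequality \eqref{A10diamagnetic.inequ} and $(V_1)$, one gets $\int |\nabla|u_\rho||^2\le C\ep_\rho^{-2}$, and via the Gagliardo--Nirenberg inequality \eqref{A7GN} also $\rho^{p-1}\int|u_\rho|^{p+1}\le C\ep_\rho^{-2}$. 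This shows $w_\rho$ as defined in \eqref{eqS3.8} is bounded in $H^1(\R^2,\C)$ (the magnetic phase factor $e^{-i(\ep_\rho\Om x\cdot z_\rho^\perp/2-\theta_\rho)}$ is exactly the gauge transformation that turns $(\nabla-i\A)u_\rho$, rescaled and centered at $z_\rho$, into $(\nabla - i\tfrac{\ep_\rho^2\Om}{2}x^\perp)w_\rho$ with a vanishing magnetic potential as $\rho\to\infty$), and that $\hat E_\rho(|u_\rho|)=I(\rho)-\int V_\Om|u_\rho|^2-(\text{small})\to\hat I(\rho)$, i.e.\ $|u_\rho|$ is, up to $o(1)$, an (almost) minimizing sequence for $\hat I(\rho)$. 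Combining this with Lemma \ref{lemma A.3} and the scaling, the rescaled densities $\ep_\rho|u_\rho(\ep_\rho\cdot+z_\rho)|$ cannot vanish: by the vanishing alternative in Lions' lemma, if $\sup_{y}\int_{B_1(y)}|w_\rho|^2\to0$ then $\|w_\rho\|_{p+1}\to0$, forcing $\hat E_\rho(|u_\rho|)\ge \int|\nabla|u_\rho||^2+o(\ep_\rho^{-2})\ge o(\ep_\rho^{-2})$, contradicting $\hat E_\rho(|u_\rho|)\to-\infty\cdot\ep_\rho^{-2}$. Since $z_\rho$ is a maximum point of $|u_\rho|$, the mass does not escape to infinity in the $B_1(y)$ sense, giving the uniform lower bound \eqref{eqS3.8-1} for part (1).

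For part (2), let $w_0$ be the weak $H^1$ limit (along a subsequence) of $w_\rho$; by Lemma \ref{lem.com}-type compactness on balls and \eqref{eqS3.8-1}, $w_0\not\equiv0$. The rescaled Euler--Lagrange equation \eqref{A11udeELE} reads, after the gauge change,
\[
-\big(\nabla-i\tfrac{\ep_\rho^2\Om}{2}x^\perp\big)^2 w_\rho + \ep_\rho^2 V_\Om(\ep_\rho x+z_\rho)w_\rho = \mu_\rho\ep_\rho^2 w_\rho + \ep_\rho^2\rho^{p-1}|w_\rho|^{p-1}w_\rho,
\]
where $\ep_\rho^2\rho^{p-1}=(a^*)^{(p-1)/2}$ by \eqref{A13epk}. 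The magnetic term $\tfrac{\ep_\rho^2\Om}{2}x^\perp\to0$ locally uniformly, and one must show $\ep_\rho^2 V_\Om(\ep_\rho x+z_\rho)\to0$ in $L^\infty_{loc}$: this uses \eqref{Vto0} plus $(V_1)$ — indeed $\int V_\Om|u_\rho|^2\to0$ together with \eqref{eqS3.8-1} forces $V_\Om(z_\rho)\to0$ (else a definite amount of mass sits where $V_\Om$ is bounded below, contradicting \eqref{Vto0}), and the growth control $V_\Om+|\nabla V_\Om|\le Ce^{\kappa|x|}$ localizes $V_\Om(\ep_\rho x+z_\rho)$ against $\ep_\rho^2$. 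Passing to the limit, $\mu_\rho\ep_\rho^2$ is bounded (from \eqref{1.12}, \eqref{energyesti} and the $L^{p+1}$ bound) so along a further subsequence $\mu_\rho\ep_\rho^2\to\lambda\le0$, and $w_0$ solves $-\Delta w_0 = \lambda w_0 + (a^*)^{(p-1)/2}|w_0|^{p-1}w_0$. A Kato-inequality / maximum-principle argument shows $|w_0|$ satisfies the corresponding inequality, hence (using uniqueness of positive solutions of \eqref{A4wdeequation} after scaling) $w_0$ equals, up to a constant phase absorbed into $\theta_\rho$ and a translation, a rescaled $w$; matching $\|w_0\|_2\le1$ with the sharp Gagliardo--Nirenberg constant and the energy value $-\tfrac{3-p}{2}$ forces $\lambda=-1$ and $w_0=w(\cdot-x_0)/\sqrt{a^*}$. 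The translation $x_0$ is then killed because $z_\rho$ is a maximum point of $|u_\rho|$, so $0$ is a maximum point of $|w_0|$ and of $w$, giving $x_0=0$. Strong $H^1$ convergence follows from norm convergence: $\|\nabla w_\rho\|_2^2\to\|\nabla w_0\|_2^2$ (no energy loss, by the energy estimate being attained in the limit) plus weak convergence. This simultaneously yields $\mu_\rho\ep_\rho^2\to-1$, which is part (3), and $V_\Om(z_\rho)\to0$.

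The main obstacle I expect is the rigorous handling of the rotation / magnetic term throughout: showing that the gauge phase in \eqref{eqS3.8} is the right one so that the rescaled energy and equation have a clean limit with the magnetic potential disappearing, and controlling the cross term $-\Om\int x^\perp\cdot(iu_\rho,\nabla u_\rho)$ — equivalently the term $\i\ep_\rho^2\Om\,x^\perp\cdot\nabla w_\rho$ and $\tfrac{\ep_\rho^4\Om^2}{4}|x|^2|w_\rho|^2$ in the blown-up functional — which must be shown to contribute only $o(1)$. This is where the uniform-in-$\rho$ bounds from \eqref{energyesti} and the decay/localization of $V_\Om$ from $(V_1)$ are essential, and where the $L^2$-subcritical exponent $1<p<3$ (entering through the precise scaling \eqref{A13epk} and the relation $\ep_\rho^2\rho^{p-1}=(a^*)^{(p-1)/2}$) has to be used carefully, since unlike the $p=3$ case the scaling is not conformally natural. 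A secondary technical point is upgrading $H^1$ convergence to $L^\infty$ convergence, which I would obtain by elliptic regularity applied to the limiting equation together with the exponential decay \eqref{A8wdecay} of $w$ and a uniform decay estimate for $w_\rho$ (a comparison-function argument using that $\mu_\rho\ep_\rho^2\to-1<0$).
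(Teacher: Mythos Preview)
Your overall architecture is right, but there are two genuine gaps that the paper handles differently.

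\textbf{Part (1).} The Lions vanishing argument gives $\sup_{y}\int_{B_1(y)}|w_\rho|^2\ge c>0$, but it does not place that mass at the origin. Knowing that $0$ is the global maximum of $|w_\rho|$ only gives a pointwise lower bound $|w_\rho(0)|^2\ge c/\pi$, not an $L^2$ lower bound on $B_2(0)$, and a spike could in principle carry no mass. The paper instead uses the PDE: from \eqref{eqS3.26} one derives the subsolution inequality \eqref{eqS3.28} for $W_\rho=|w_\rho|^2$, then De Giorgi--Nash--Moser yields $\max_{B_1(0)}W_\rho\le C\int_{B_2(0)}W_\rho$. The lower bound $W_\rho(0)\ge\beta>0$ comes from evaluating \eqref{eqS3.28} at the maximum (where $-\Delta W_\rho(0)\ge0$) together with $\ep_\rho^2\mu_\rho\le-\tfrac{3-p}{2}$, which follows from \eqref{1.12} and \eqref{energyesti}. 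So the PDE input is essential already in part (1), not only later for decay.

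\textbf{Part (2).} Your plan is to pass to the limit in the Euler--Lagrange equation and identify $w_0$ via Kato's inequality and uniqueness for \eqref{A4wdeequation}. The missing step is ruling out mass loss: weak $H^1$ convergence only gives $\|w_0\|_2\le1$, and Kato's inequality gives $-\Delta|w_0|\le\lambda|w_0|+(a^*)^{(p-1)/2}|w_0|^{p}$ as an \emph{inequality}, which does not by itself force $|w_0|$ to be a solution or fix $\lambda=-1$. The paper avoids this by a purely variational route: it first shows, via the diamagnetic inequality and \eqref{energyesti}, that $|w_\rho|$ is a minimizing sequence for $\hat I(\sqrt{a^*})=-\tfrac{3-p}{2}$, and then uses Br\'ezis--Lieb with a strict sublinearity-in-$\lambda$ argument to exclude $\|w_0\|_2^2=\lambda\in(0,1)$. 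Only after $|w_\rho|\to w/\sqrt{a^*}$ strongly in $H^1$ does the paper bound the complex gradient $\|\nabla w_\rho\|_2$ (using $|x|^2\le C(\Omega)V_\Omega(x)$ for large $|x|$ to absorb the rotation cross-term, cf.\ \eqref{eqS3.20}--\eqref{eqS3.23}) and then show $|\nabla\hat w_0|=|\nabla|\hat w_0||$ a.e., which is what forces a constant phase. Note also that the paper's specific choice of $\theta_\rho$ --- minimizing $\|e^{i\theta}\bar w_\rho-w/\sqrt{a^*}\|_{L^2}$, yielding $\int w\,I_\rho=0$ --- is what pins the limiting phase to $0$; your proposal absorbs the phase into $\theta_\rho$ without a mechanism to select it. Finally, the $L^\infty$ upgrade you mention at the end is not part of this lemma; it is the content of Lemma \ref{decay-L}.
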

     \begin{proof}
     1. Denote $\bar{w}_\rho(x):=\ep_\rho u_\rho(\ep_\rho x+z_\rho)e^{-i\f{\ep_\rho\Om}{2}x\cdot z^\perp_\rho}$ and $w_\rho(x):=\bar{w}_\rho(x) e^{i\theta_\rho}$, where the parameter $\theta_\rho\in[0,2\pi)$ is chosen properly such that
     \begin{equation}\label{eqS3.11}
     \|w_\rho-\f{w}{\sqrt{a^*}}\|_{L^2(\R^2)}
     =\min\limits_{\theta\in[0,2\pi)}\|e^{i\theta}\bar{w}_\rho-\f{w}{\sqrt{a^*}}\|_{L^2(\R^2)}.
     \end{equation}
Rewrite
     \begin{equation}\label{eqS3.12}
     w_\rho(x)=R_\rho(x)+iI_\rho(x),
     \end{equation}
where $R_\rho(x)$ and $I_\rho(x)$ denote the real and imaginary parts of $w_\rho(x)$  respectively. By (\ref{eqS3.11}), we have
     \begin{equation}\label{eqS3.13}
     \int_{\R^2}w(x)I_\rho(x)dx=0.
     \end{equation}
From (\ref{A11udeELE}) and (\ref{eqS3.8}), we deduce that $w_\rho$ satisfies
     \begin{equation}\label{eqS3.26}
     \begin{split}
     &-\Delta w_\rho+i\ep^2_\rho\Om(x^\perp\cdot\nabla w_\rho)\\
     &+\Big[\f{\ep^4_\rho\Om^2}{4}|x|^2
     +\ep^2_\rho V_\Om(\ep_\rho x+z_\rho)-\ep^2_\rho\mu_\rho-(a^*)^\f{p-1}{2}|w_\rho|^{p-1}\Big]w_\rho=0\,\ \text{in} \,\ \R^2.
     \end{split}
     \end{equation}
Set $W_\rho=|w_\rho|^2\geq0$. We then obtain from (\ref{eqS3.26}) that
     \begin{equation}\label{eqS3.27}
     \begin{split}
     &-\f{1}{2}\Delta W_\rho+|\nabla w_\rho|^2-\ep^2_\rho\Om x^{\perp}\cdot(iw_\rho,\nabla w_\rho)\\
     &+\Big[\f{\ep^4_\rho\Om^2}{4}|x|^2
     +\ep^2_\rho V_\Om(\ep_\rho x+z_\rho)-\ep^2_\rho\mu_\rho-(a^*)^{\f{p-1}{2}}W^\f{p-1}{2}_\rho\Big]W_\rho=0 \,\ \text{in} \,\ \R^2.
     \end{split}
     \end{equation}
Using the diamagnetic inequality (\ref{A10diamagnetic.inequ}), we derive from (\ref{eqS3.27}) that
     \begin{equation}\label{eqS3.28}
     -\f{1}{2}\Delta W_\rho+[-\ep^2_\rho\mu_\rho-(a^*)^{\f{p-1}{2}}W^\f{p-1}{2}_\rho]W_\rho\leq0 \,\ \text{in} \,\ \R^2.
     \end{equation}
By \eqref{1.12}, \eqref{ARefesti} and \eqref{energyesti}, we obtain that $\ep^2_\rho\mu_\rho\leq-\f{3-p}{2}$ as $\rho\to\infty$. Note that $W^\f{p-1}{2}_\rho$ is bounded uniformly in $L^{\f{2}{p-1}}(\R^2,\R)$, where $1<\f{2}{p-1}<\infty$. Then applying De Giorgi-Nash-Moser theory \cite [Theorem 4.1]{HL} to (\ref{eqS3.28}) yields that
     \begin{equation}\label{K1}
     \int_{B_2(0)}W_\rho(x) dx\geq\max_{x\in B_1(0)}W_\rho(x) \,\ \text{as} \,\ \rho\to\infty.
     \end{equation}
Since 0 is a global maximal point of $W_\rho(x)$ for any $0<\rho<\infty$, we have $-\Delta W_\rho(0)\geq0$ for all $0<\rho<\infty$. Using the fact that $\ep^2_\rho\mu_\rho\leq-\f{3-p}{2}$ as $\rho\to\infty$, we then obtain from \eqref{eqS3.28} that there exists a constant $\beta>0$, which is independent of $0<\rho<\infty$, such that
     \begin{equation}\label{K1-1}
     W_\rho(0)\geq\beta>0.
     \end{equation}
It then follows from \eqref{K1} and \eqref{K1-1} that \eqref{eqS3.8-1} holds.

2. Using the diamagnetic inequality, we derive from (\ref{A10diamagnetic.inequ}), (\ref{ARefesti}), (\ref{energyesti}) and (\ref{eqS3.8})  that
     \begin{align}\label{eqS3.15}
      -\f{3-p}{2}
      &=\ep^2_\rho\hat I(\rho)=\ep^2_\rho I(\rho)+o(1)\nonumber\\
      &=\int_{\R^2}\Big(|\nabla w_\rho|^2
      -\ep^2_\rho\Om x^\perp\cdot(iw_\rho,\nabla w_\rho)
      +\f{\Om^2}{4}\ep^4_\rho|x|^2|w_\rho|^2+\ep^2_\rho V_\Om(\ep_\rho x+z_\rho)|w_\rho|^2\nonumber\\
      &\quad-\f{2(a^{\ast})^{\f{p-1}{2}}}{p+1}|w_\rho|^{p+1}\Big)dx
      +o(1)\nonumber\\
      &\geq\int_{\R^2}\big|\nabla|w_\rho|\big|^2dx
      -\f{2(a^{\ast})^{\f{p-1}{2}}}{p+1}\int_{\R^2}|w_\rho|^{p+1}dx\nonumber\\
      &\geq\hat{I}(\sqrt{a^*})=-\f{3-p}{2}\,\ \text{as} \,\ \rho\to\infty,
     \end{align}
which yields that $|w_\rho|$ is a minimizing sequence of $\hat{I}(\sqrt{a^*})$. Note from \eqref{eqS3.15} that $|w_\rho|$ is bounded uniformly in $H^1(\R^2,\R)$, therefore we can assume that up to a subsequence if necessary, $|w_\rho|$ convergence to $w_0$ weakly in $H^1(\R^2,\R)$ as $\rho\to\infty$ for some $0\leq w_0\in H^1(\R^2,\R)$. From \eqref{eqS3.8-1}, we get that $w_0\not\equiv0$ in $\R^2$. By the weak convergence, we may assume that $|w_\rho|\to w_0$ $a.e.$ in $\R^2$ as $\rho\to\infty$. Using the Br\'{e}zis-Lieb  lemma, we obtain that
    \begin{equation}\label{3.15-1}
     \|w_\rho\|_q^q=\|w_0\|_q^q+\big\||w_\rho|-w_0\big\|_q^q+o(1)\,\ \mbox{as} \,\ \rho\to\infty, \,\ \hbox{where} \,\ 2\leq q< \infty,
    \end{equation}
and
    \begin{equation}\label{3.15-2}
    \big\|\nabla |w_\rho|\big\|_2^2=\big\|\nabla  w_0 \big\|_2^2+\big\|\nabla (|w_\rho|-w_0)\big\|_2^2+o(1) \,\ \mbox{as} \,\ \rho\to\infty.
    \end{equation}
Next, we prove that $\|w_0\|^2_2=1$. On the contrary, we assume that $\|w_0\|^2_2=\lambda$ and $\big\||w_\rho|-w_0\big\|_2^2=1-\lambda$, where $\lambda\in(0,1)$. Set $w_\lambda:=\f{w_0}{\sqrt{\lambda}}$ and $w_{1-\lambda}:=\f{|w_\rho|-w_0}{\sqrt{1-\lambda}}$. From \eqref{ARefesti}, \eqref{eqS3.15}, \eqref{3.15-1} and \eqref{3.15-2}, we then derive that as $\rho\to\infty$,
     \begin{align}\label{eqS3.15-3}
      -\f{3-p}{2}
      &=\lim_{\rho\to\infty}\Big[\int_{\R^2}\big|\nabla|w_\rho|\big|^2dx
      -\f{2(a^{\ast})^{\f{p-1}{2}}}{p+1}\int_{\R^2}|w_\rho|^{p+1}dx\Big]\notag\\
      &=\int_{\R^2}\big|\nabla w_0\big|^2dx
      -\f{2(a^{\ast})^{\f{p-1}{2}}}{p+1}\int_{\R^2}|w_0|^{p+1}dx\notag\\
      &\quad+\lim_{\rho\to\infty}\Big[\int_{\R^2}\big|\nabla(|w_\rho|-w_0)\big|^2dx
      -\f{2(a^{\ast})^{\f{p-1}{2}}}{p+1}\int_{\R^2}\big||w_\rho|-w_0\big|^{p+1}dx\Big]\notag\\
      &>\lambda\Big[\int_{\R^2}\big|\nabla w_\lambda\big|^2dx
      -\f{2(a^{\ast})^{\f{p-1}{2}}}{p+1}\int_{\R^2}|w_\lambda|^{p+1}dx\Big]\notag\\
      &\quad+(1-\lambda)\lim_{\rho\to\infty}\Big[\int_{\R^2}\big|\nabla w_{1-\lambda}\big|^2dx-\f{2\big(a^{\ast}\big)^{\f{p-1}{2}}}{p+1}
      \int_{\R^2}|w_{1-\lambda}|^{p+1}dx\Big]\notag\\
      &\geq\lambda\hat{I}(\sqrt{a^*})+(1-\lambda)\hat{I}(\sqrt{a^*})
      =\hat{I}(\sqrt{a^*})=-\f{3-p}{2},\notag
     \end{align}
which is a contradiction, therefore $\|w_0\|^2_2=1$ holds. Since $\|w_\rho\|^2_2=\|w_0\|^2_2=1$, we obtain from \eqref{3.15-1} that
     \begin{equation}\label{eqS3.15-4}
     |w_\rho(x)|\to w_0(x) \,\ \text{strongly in} \,\ L^2(\R^2,\R)  \,\ \text{as} \,\ \rho\to\infty.
     \end{equation}
By the uniform boundedness of $|w_\rho|$ in $H^1(\R^2,\R)$ and the interpolation inequality, we derive from \eqref{eqS3.15-4} that $|w_\rho(x)|\to w_0(x)$ strongly in $L^q(\R^2,\R)$ $(2\leq q<\infty)$ as $\rho\to\infty$. Moreover, using the weak lower semicontinuity, \eqref{ARefesti} and \eqref{eqS3.15}, we obtain that $\nabla|w_\rho(x)|\to \nabla w_0(x)$ strongly in $L^2(\R^2,\R)$ as $\rho\to\infty$. Therefore, we deduce from above that
     \begin{equation}\label{eqS3.16}
     |w_\rho(x)|\to w_0(x) \,\ \text{strongly in} \,\ H^1(\R^2,\R) \,\ \text{as} \,\ \rho\to\infty.
     \end{equation}
Since $|w_\rho|$ is a minimizing sequence of $\hat{I}(\sqrt{a^*})$ and $\|w_0\|^2_2=1$, we obtain from \eqref{eqS3.16} that $w_0$ is a minimizer of $\hat{I}(\sqrt{a^*})$. Then we get from \eqref{ARmini} that $w_0(x)=\f{w(x+x_0)}{\sqrt{a^*}}$, where $x_0\in\R^2$. On the other hand, since the origin is a global maximum point of $|w_\rho|$ for any $\rho\in(0,\infty)$, it must be also a global maximum point of $w(x+x_0)$ in view of (\ref{eqS3.16}), which implies that $x_0=0$. We conclude that, up to a subsequence if necessary,
     \begin{equation}\label{eqS3.17}
     |w_\rho(x)|\to \f{w(x)}{\sqrt{a^*}} \,\ \text{strongly in} \,\ H^1(\R^2,\R) \,\ \text{as} \,\ \rho\to\infty.
     \end{equation}
Furthermore, since the convergence (\ref{eqS3.17}) is independent of what subsequence $\{|w_\rho|\}$ we choose, we conclude that (\ref{eqS3.17}) holds for the whole sequence.

Now, we shall prove that $w_\rho$ is bounded uniformly in $H^1(\R^2,\C)$ as $\rho\to\infty$. By the definition of $w_\rho$ in (\ref{eqS3.8}), we only need to prove that there exists a constant $C>0$, independent of $\rho$, such that as $\rho\to\infty$,
     \begin{equation}\label{eqS3.18}
    \int_{\R^2}|\nabla w_\rho|^2dx\leq C.
     \end{equation}
\noindent In fact, since $0<\Om<\Om^*$ is fixed, the definition (\ref{A6Omdedefintion}) of $\Om^*$ implies that
    \begin{equation}\label{eqS3.20}
    |x|^2\leq C(\Om)\Big(V(x)-\f{\Om^2}{4}|x|^2\Big) \,\ \text{for sufficiently large} \,\ |x|>0.
    \end{equation}
Using (\ref{eqS3.20}), we then deduce that for any given large constant $M>0$,
     \begin{align}\label{eqS3.21}
     \f{\Om^2}{4}\ep^4_\rho\int_{\R^2}|x|^2|w_\rho|^2dx
     &=\f{\Om^2}{4}\ep^2_\rho\int_{|\ep_\rho x|\leq M}|\ep_\rho x|^2|w_\rho|^2dx
     +\f{\Om^2}{4}\ep^2_\rho\int_{|\ep_\rho x|> M}|\ep_\rho x|^2|w_\rho|^2dx\notag\\
     &\leq o(1)+C(\Om)\int_{|\ep_\rho x|>M}\ep^2_\rho V_\Om(\ep_\rho x)|w_\rho|^2dx\\
     &=o(1)  \,\ \text{as} \,\ \rho\to\infty.\notag
     \end{align}
By H\"{o}lder inequality, it then follows from (\ref{eqS3.21}) that as $\rho\to\infty$,
     \begin{equation}\label{eqS3.22}
     \begin{split}
     \ep^2_\rho\Om\Big|\int_{\R^2}x^\perp\cdot(iw_\rho,\nabla w_\rho)dx\Big|
     &\leq\Big(\ep^4_\rho\Om^2\int_{\R^2}|x|^2|w_\rho|^2dx\Big)^{\f{1}{2}}
     \Big(\int_{\R^2}|\nabla w_\rho|^2dx\Big)^{\f{1}{2}}\\
     &=o(1)\Big(\int_{\R^2}|\nabla w_\rho|^2dx\Big)^{\f{1}{2}}.
     \end{split}
     \end{equation}
Since $|w_\rho|$ is bounded  uniformly  in $H^1(\R^2,\R)$, we derive from (\ref{eqS3.22}) and (\ref{eqS3.15}) that as $\rho\to\infty$,
     \begin{equation}\label{eqS3.23}
     -\f{3-p}{2}
     \geq\int_{\R^2}|\nabla w_\rho|^2dx-o(1)\Big(\int_{\R^2}|\nabla w_\rho|^2dx\Big)^{\f{1}{2}}-C,
     \end{equation}
which implies (\ref{eqS3.18}) holds true.

Based on (\ref{eqS3.17}) and \eqref{eqS3.18}, we may assume that up to a subsequence if necessary,
     \begin{equation}\label{g1}
     w_\rho\rightharpoonup\hat w_0 \,\ \text{weakly in} \,\ H^1(\R^2,\C) \,\ \text{as} \,\ \rho\to\infty,
     \end{equation}
and
     \begin{equation}\label{g2}
     w_\rho\to \hat w_0 \,\ \text{strongly in} \,\ L^q_{loc}(\R^2,\C) (2\leq q<\infty) \,\ \text{as} \,\ \rho\to\infty,
     \end{equation}
for some $\hat w_0\in H^1(\R^2,\C)$ and $\hat w_0\not\equiv0$. From (\ref{eqS3.17}) and (\ref{g2}), we derive that
     \begin{equation}\label{g3}
     \begin{split}
     \int_{\R^2}|\hat w_0|^2dx&=\lim\limits_{R\to\infty}\lim\limits_{\rho\to\infty}
      \int_{B_R(0)}|w_\rho|^2dx\\
     &=\lim\limits_{R\to\infty}
      \int_{B_R(0)}\big|\f{w}{\sqrt{a^*}}\big|^2dx=1,
      \end{split}
     \end{equation}
which together with (\ref{g1}) implies that
     \begin{equation}\label{g4}
     w_\rho\to \hat w_0 \quad \text{strongly in} \ L^2(\R^2,\C) \ \text{as} \ \rho\to\infty.
     \end{equation}
Since $w_\rho$ is bounded uniformly in $H^1(\R^2,\C)$ as $\rho\to\infty$, using the interpolation inequality, we derive from (\ref{g4}) that
     \begin{equation}\label{g5}
     w_\rho\to \hat w_0 \quad \text{strongly in} \ L^q(\R^2,\C) (2\leq q<\infty) \ \text{as} \ \rho\to\infty.
     \end{equation}
By the weak lower semicontinuity, (\ref{A17Rcvp}), (\ref{ARefesti}), \eqref{eqS3.22} and (\ref{g5}), we deduce from (\ref{eqS3.15}) that
      \begin{equation}\label{g6}
      \begin{split}
      \hat I(\sqrt{a^*})=-\f{3-p}{2}
      &\geq\lim\limits_{\rho\to\infty}\int_{\R^2}\Big(|\nabla w_\rho|^2
      -\f{2(a^{\ast})^{\f{p-1}{2}}}{p+1}|w_\rho|^{p+1}\Big)dx\\
      &\geq\int_{\R^2}\Big(|\nabla \hat w_0|^2
      -\f{2(a^{\ast})^{\f{p-1}{2}}}{p+1}|\hat w_0|^{p+1}\Big)dx\\
      &\geq\hat I(\sqrt{a^*}),
      \end{split}
     \end{equation}
which then yields
      \begin{equation}\label{g7}
      \lim\limits_{\rho\to\infty}\int_{\R^2}|\nabla w_\rho|^2
      =\int_{\R^2}|\nabla \hat w_0|^2dx.
     \end{equation}
Using (\ref{eqS3.17}) and \eqref{g7}, we obtain that
      \begin{equation}\label{g8}
      \int_{\R^2}|\nabla \hat w_0|^2
      =\int_{\R^2}|\nabla |\hat w_0||^2dx, \ i.e., \ |\nabla \hat w_0|
      =|\nabla |\hat w_0|| \,\  a.e. \,\ \text{ in} \,\ \R^2.
     \end{equation}
We thus conclude from (\ref{eqS3.17}), (\ref{g5}), (\ref{g7}) and (\ref{g8}) that
     \begin{equation}\label{g9}
     w_\rho\to\hat w_0=\f{w}{\sqrt{a^*}}e^{i\sigma} \,\ \text{strongly in} \,\ H^1(\R^2,\C) \,\ \text{as} \,\ \rho\to\infty
     \end{equation}
for some $\sigma\in\R$. Furthermore, it follows from (\ref{eqS3.13}) that $\sigma=0$. Since the convergence of (\ref{g9}) is independent of the choice of the subsequence, we conclude that (\ref{g9}) holds for the whole sequence and hence (\ref{eqS3.9}) holds. From (\ref{Vto0}) and (\ref{eqS3.9}), we obtain that $\lim\limits_{\rho\to\infty}V_\Om(z_\rho)=0$.

3.  From (\ref{A9identity}), (\ref{1.12}), (\ref{eqS3.8}) and (\ref{g5}), we deduce that as $\rho\to\infty$,
     \begin{equation}\label{eqS3.25}
      \begin{split}
      \mu_\rho\ep^2_\rho&=\ep^2_\rho\big[I(\rho)-\f{p-1}{p+1}\rho^{p-1}\int_{\R^2}|u_\rho|^{p+1}dx\big]\\
                  &=\ep^2_\rho I(\rho)-\f{p-1}{p+1}(a^*)^\f{p-1}{2}\int_{\R^2}|w_\rho|^{p+1}dx\\
                  &\to
                  -1.
     \end{split}
     \end{equation}
This therefore completes the proof of Lemma \ref{lem.important}.
\end{proof}
    \begin{lem}\label{decay-L}
Under the assumptions of Theorem \ref{thm.limibeha} and let $u_\rho$ be a minimizer of $I(\rho)$ and $w_\rho$ be defined in Lemma \ref{lem.important}. Then we have
\begin{enumerate}
\item $w_\rho$ decays exponentially in the sense that
     \begin{equation}\label{eqS3.10}
      |w_\rho(x)|\leq Ce^{-\f{2}{3}|x|} \,\ \text{in} \,\ \R^2/B_R(0) \,\ \text{as} \,\ \rho\to\infty,
     \end{equation}
where $C>0$ is a constant independent of $\rho$ and $R>0$.
\item The global maximum point $z_\rho$ of $|u_\rho|$ is unique as $\rho\to\infty$, and $w_\rho(x)$ satisfies
     \begin{equation}\label{A12limibeha-1}
      w_\rho(x)\to \f{w(x)}{\sqrt{a^*}} \,\ \text{uniformly in} \,\ L^\infty(\R^2,\C) \,\ \text{as} \,\ \rho\to\infty.
     \end{equation}
\end{enumerate}
    \end{lem}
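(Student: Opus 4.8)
The plan is to derive the exponential decay (\ref{eqS3.10}) from the equation (\ref{eqS3.26}) satisfied by $w_\rho$ by a standard comparison-function argument, and then to bootstrap (\ref{eqS3.10}) together with the $H^1$-convergence (\ref{eqS3.9}) and elliptic regularity to upgrade to the uniform $L^\infty$-convergence (\ref{A12limibeha-1}); the uniqueness of the maximum point then follows from the nondegeneracy of $w$. First I would look at the scalar inequality for $W_\rho=|w_\rho|^2$. Starting from (\ref{eqS3.27}) and discarding the nonnegative terms $|\nabla w_\rho|^2-\f{\ep^2_\rho\Om^2}{4}|x|^2|w_\rho|^2$ via the diamagnetic inequality (\ref{A10diamagnetic.inequ}) (as already done to get (\ref{eqS3.28})), one has
\begin{equation*}
-\tfrac12\Delta W_\rho + \big[-\ep^2_\rho\mu_\rho-(a^*)^{\f{p-1}{2}}W_\rho^{\f{p-1}{2}}\big]W_\rho\le 0 \,\ \text{in} \,\ \R^2.
\end{equation*}
By Lemma \ref{lem.important}(3) we have $\ep^2_\rho\mu_\rho\to-1$, so $-\ep^2_\rho\mu_\rho\ge\f34$ for $\rho$ large. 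On the other hand, the strong $L^q$-convergence of $w_\rho$ to $w/\sqrt{a^*}$ in (\ref{eqS3.9}), together with a De Giorgi–Nash–Moser estimate applied on unit balls $B_1(\xi)$ for $|\xi|$ large (as in the derivation of (\ref{K1})), forces $W_\rho(x)\to0$ uniformly as $|x|\to\infty$, uniformly in $\rho$; hence there is $R>0$ with $(a^*)^{\f{p-1}{2}}W_\rho^{\f{p-1}{2}}(x)\le\f1{16}$ for $|x|\ge R$ and all large $\rho$. Therefore
\begin{equation*}
-\tfrac12\Delta W_\rho + \tfrac{1}{4} W_\rho\le 0 \,\ \text{in} \,\ \R^2\setminus B_R(0),
\end{equation*}
and comparing $W_\rho$ with the supersolution $C e^{-\f43|x|}$ on $\R^2\setminus B_R(0)$ (choosing $C$ so that the inequality holds on $\p B_R(0)$ using that $W_\rho$ is bounded there) yields $W_\rho(x)\le Ce^{-\f43|x|}$, i.e. (\ref{eqS3.10}).

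Next, for the $L^\infty$-convergence I would rewrite the equation for $w_\rho$ in the form $-\Delta w_\rho = f_\rho$ with
\begin{equation*}
f_\rho=-i\ep^2_\rho\Om(x^\perp\cdot\nabla w_\rho)-\Big[\tfrac{\ep^4_\rho\Om^2}{4}|x|^2+\ep^2_\rho V_\Om(\ep_\rho x+z_\rho)-\ep^2_\rho\mu_\rho-(a^*)^{\f{p-1}{2}}|w_\rho|^{p-1}\Big]w_\rho.
\end{equation*}
On any fixed ball $B_{2R'}(0)$ the coefficients are bounded (using (\ref{eqS3.18}), (\ref{eqS3.21}), Lemma \ref{lem.important}(3), and assumption $(V_1)$ to control $\ep^2_\rho V_\Om(\ep_\rho x+z_\rho)$ near the origin since $V_\Om(z_\rho)\to0$), so $f_\rho$ is bounded in $L^q_{loc}$ for every $q<\infty$; by $W^{2,q}$ elliptic estimates and Sobolev embedding $w_\rho$ is bounded in $C^{1,\gamma}_{loc}$, and the decay (\ref{eqS3.10}) plus (\ref{A8wdecay}) controls the tails. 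Combining the local $C^{1,\gamma}$-compactness with the uniform exponential decay and the already-known $H^1$-limit (\ref{eqS3.9}) gives $w_\rho\to w/\sqrt{a^*}$ uniformly on $\R^2$, which is (\ref{A12limibeha-1}). Finally, for the uniqueness of the maximum point: since $w_\rho\to w/\sqrt{a^*}$ in $C^1_{loc}$ and $w$ is radially strictly decreasing with its only critical point at the origin where $D^2 w(0)$ is negative definite, any sequence of global maxima $z_\rho'$ of $|u_\rho|$ (equivalently, of critical points of $|w_\rho|$ near $0$ after the corresponding rescaling) must converge to $0$, and the nondegenerate strict maximum of the limit, combined with $C^1$-convergence and the implicit function theorem, forces this critical point to be unique for $\rho$ large.

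The main obstacle I anticipate is the passage from the pointwise/local control to the \emph{uniform} smallness of $W_\rho$ at infinity needed to close the comparison argument: the Moser iteration constant must be shown independent of both the center $\xi$ and the parameter $\rho$, which requires the uniform $H^1$-bound (\ref{eqS3.18}) on $w_\rho$ together with the uniform bound on the coefficient $\ep^2_\rho\mu_\rho$ and on $(a^*)^{\f{p-1}{2}}W_\rho^{\f{p-1}{2}}$ in the relevant $L^{\f2{p-1}}$-norm. A secondary technical point is handling the potential term $\ep^2_\rho V_\Om(\ep_\rho x+z_\rho)$ in the region where $|x|$ is comparable to $\ep_\rho^{-1}$: there one must again invoke (\ref{eqS3.20})–(\ref{eqS3.21}) to absorb it, rather than bound it pointwise, so the comparison inequality is really valid only on an annulus $R\le|x|\le c\,\ep_\rho^{-1}$ and one treats the far region $|x|\gtrsim\ep_\rho^{-1}$ separately using that the mass there is $o(1)$. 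Once the uniform decay is in hand, the regularity bootstrap and the uniqueness are routine.
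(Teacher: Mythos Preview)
Your overall strategy is the standard one (and essentially what the paper's reference \cite[Proposition 3.3]{GLY} does): pass to the scalar subsolution inequality (\ref{eqS3.28}) for $W_\rho$, obtain a uniform linear coefficient outside a large ball, apply a comparison with an exponential barrier, then bootstrap via local $W^{2,q}$ estimates to $C^{1}_{loc}$ and combine with the decay to get uniform convergence; the uniqueness of the maximum then comes from the structure of $w$. Two small corrections are worth flagging.

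First, the numerical constants you wrote do not give the stated rate. With the coefficient $\tfrac14$ in $-\tfrac12\Delta W_\rho+\tfrac14 W_\rho\le0$, the function $e^{-\frac43|x|}$ is \emph{not} a supersolution: for $\phi=e^{-\alpha|x|}$ one has $-\tfrac12\Delta\phi+c\,\phi=\big(c-\tfrac{\alpha^2}{2}+\tfrac{\alpha}{2|x|}\big)\phi$, which is nonnegative for large $|x|$ only when $\alpha\le\sqrt{2c}$. To reach $\alpha=\tfrac43$ you need $c\ge\tfrac89$. This is available, since $-\ep^2_\rho\mu_\rho\to1$ by Lemma~\ref{lem.important}(3) and $(a^*)^{\f{p-1}{2}}W_\rho^{\f{p-1}{2}}$ can be made arbitrarily small outside $B_R(0)$; so just take, say, $-\ep^2_\rho\mu_\rho\ge\tfrac{15}{16}$ and the nonlinear term $\le\tfrac{1}{32}$, giving $c\ge\tfrac{29}{32}>\tfrac89$. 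The argument is fine once you use constants closer to $1$ rather than $\tfrac34$ and $\tfrac14$.

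Second, your ``secondary technical point'' about the potential term $\ep^2_\rho V_\Om(\ep_\rho x+z_\rho)$ in the region $|x|\sim\ep_\rho^{-1}$ is a non-issue: in passing from (\ref{eqS3.27}) to (\ref{eqS3.28}) that term was already discarded (it is nonnegative), so the differential inequality $-\tfrac12\Delta W_\rho + c\,W_\rho\le0$ holds on all of $\R^2\setminus B_R(0)$, not just on an annulus. No separate treatment of the far region is needed for the decay. (The potential does reappear in the $W^{2,q}$ estimate, but there it is harmless because the exponential decay you have just proved dominates any polynomial or subexponential growth allowed by $(V_1)$.)

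Finally, for the uniqueness of the maximum you invoke $C^1$-convergence together with the implicit function theorem. Strictly speaking the IFT applied to $\nabla|w_\rho|^2=0$ requires control of the Hessian, i.e.\ $C^2$ (or $W^{2,q}$ with $q>2$) convergence; this is available by one further bootstrap, but you should say $C^2_{loc}$ rather than $C^1_{loc}$ at that step.
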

Since the proof of this lemma is similar to those in \cite [Proposition 3.3]{GLY}, we omit it here.\qed
\vskip 0.2truein
\noindent{\bf{Proof of Theorem \ref{thm.limibeha}:}} \ In view of Lemmas \ref{lem.important} and \ref{decay-L}, in order to complete the proof of Theorem \ref{thm.limibeha}, it remains to prove that
     \begin{equation}\label{A14concentrate.rate}
     \lim\limits_{\rho\to\infty}\f{z_\rho}{\ep_\rho}=y_0,
     \end{equation}
where $z_\rho$ is the unique global maximum point of $|u_\rho|$, $\ep_\rho$ and $y_0$ are defined by (\ref{A13epk}) and $(V_2)$ respectively.

Setting $\tilde{u}_\rho(x):=\f{1}{\sqrt{a^*}}\ep^{-1}_\rho w(\ep^{-1}_\rho x-y_0)$. Recall from (\ref{ARmini}) that $\tilde{u}_\rho$ is a nonnegative minimizer of $\hat{I}(\rho)$, it then follows from (\ref{A8wdecay}), $(V_1)$ that as $\rho\to\infty$,
     \begin{align}\label{eqS3.36}
      &\quad I(\rho)-\hat{I}(\rho)\nonumber\\
      &\leq E_{\rho}\big(\tilde{u}_\rho(x)e^{i\f{\ep_\rho\Om}{2}x\cdot y^\perp_0}\big)
      -\hat{E}_{\rho}\big(\tilde{u}_\rho(x)\big)\nonumber\\
      &=\f{\ep^2_\rho}{a^*}\f{\Om^2}{4}\int_{\R^2}|x|^2w^2(x)dx
      +\f{1}{a^*}\int_{\R^2}V_{\Om}(\ep_\rho x+\ep_\rho y_0)w^2(x)dx\nonumber\\
   &=
   \left\{
   \begin{array}{lr}
   \f{\ep^s_\rho}{a^*}\big(1+o(1)\big)\ds\int_{\R^2}h(x+y_0)w^2(x)dx &\text{if} \,\ 1<s<2,\\
   \f{\ep^2_\rho}{a^*}\big(1+o(1)\big)\ds\int_{\R^2}\Big(\f{\Om^2}{4}|x|^2w^2(x)+h(x+y_0)w^2(x)\Big)dx &\text{if} \,\ s=2.
   \end{array}
   \right.
     \end{align}
 On the other hand, we derive from (\ref{A1cvp}), (\ref{A17Rcvp}) and (\ref{eqS3.8}) that as $\rho\to\infty$,
     \begin{equation}\label{eqS3.37}
     \begin{split}
      &\quad I(\rho)-\hat{I}(\rho)\\
      &\geq E_{\rho}(u_\rho)-\hat{E}_{\rho}(|u_\rho|)\\
      &\geq\int_{\R^2}V_\Om(\ep_\rho x\!+\!z_\rho)|w_\rho|^2dx
      \!+\!\f{\Om^2}{4}\ep^2_\rho\int_{\R^2}|x|^2|w_\rho|^2dx
      \!-\!\Om \int_{\R^2}x^\perp\cdot(iw_\rho,\nabla w_\rho)dx.
     \end{split}
     \end{equation}
     Next, we claim that
     \begin{equation}\label{eqS3.38}
       \Om\int_{\R^2}x^\perp\cdot(iw_\rho,\nabla w_\rho)dx=o(\epsilon^{1+\f{s}{2}}_\rho) \,\ \text{as} \,\ \rho\to\infty.
     \end{equation}
     Actually, one can derive from (\ref{A1cvp}), \eqref{ARefesti}, (\ref{eqS3.8}) and (\ref{eqS3.36}) that
      \begin{align}\label{eqS3.39}
      C\epsilon^{2+s}_\rho&\geq \ep^2_\rho I(\rho)-\ep^2_\rho\hat I(\rho)\nonumber\\
      &\geq\int_{\R^2}\Big(|\nabla w_\rho|^2dx
      -\f{2(a^*)^\f{p-1}{2}}{p+1}\int_{\R^2}|w_\rho|^{p+1}\Big)dx
      -\ep^2_\rho\Om\int_{\R^2}x^\perp\cdot(iw_\rho,\nabla w_\rho)dx\nonumber\\
      &\quad+\f{3-p}{2}\quad \text{as}\,\ \rho\to\infty.
     \end{align}
Note from (\ref{eqS3.12}), without loss of generality,
we assume that $\|R_\rho\|^2_{L^2}=\lambda$, $\|I_\rho\|^2_{L^2}=1-\lambda$, where $\lambda\in(0,1]$. In addition, it follows from (\ref{A12limibeha-1}) that
$R_\rho\to\f{w}{\sqrt{a^*}}$ and $I_\rho\to0$ uniformly in $\R^2$ as $\rho\to\infty$. Then, we derive from
 (\ref{A17Rcvp}), (\ref{ARefesti}), (\ref{ARmini}) and above that as $\rho\to\infty$,
 \begin{align}\label{eqS3.40}
      &\quad\int_{\R^2}|\nabla w_\rho|^2dx-\f{2(a^*)^\f{p-1}{2}}{p+1}\int_{\R^2}|w_\rho|^{p+1}dx\nonumber\\
     &=\int_{\R^2}\Big[|\nabla R_\rho|^2+|\nabla I_\rho|^2\Big]dx
     -\f{2(a^*)^\f{p-1}{2}}{p+1}\int_{\R^2}
     \Big[R^{p+1}_\rho+\f{p+1}{2}R^{p-1}_\rho I^2_\rho+o(I^2_\rho)\Big]dx\nonumber\\
     &=\lambda\Big(\int_{\R^2}\Big|\nabla \f{R_\rho}{\sqrt{\lambda}}\Big|^2dx-\f{2(a^*)^\f{p-1}{2}}{p+1}\int_{\R^2}\Big|\f{R_\rho}{\sqrt{\lambda}}\Big|^{p+1}dx\Big)
     +\f{2(a^*)^\f{p-1}{2}}{p+1}(\lambda-\lambda^{\f{p+1}{2}})\int_{\R^2}\Big|\f{R_\rho}{\sqrt{\lambda}}\Big|^{p+1}dx\nonumber\\
     &\quad+\int_{\R^2}|\nabla I_\rho|^2dx-(a^*)^{\f{p-1}{2}}\int_{\R^2}R^{p-1}_\rho I^2_\rho dx+o\big(\|I_\rho\|^2_2\big)\nonumber\\
     &\geq\lambda \hat{I}(\sqrt{a^*})+\f{2(a^*)^\f{p-1}{2}}{p+1}\f{p-1}{2}(1-\lambda)\int_{\R^2}\Big|\f{R_\rho}{\sqrt{\lambda}}\Big|^{p+1}dx\nonumber\\
     &\quad+\int_{\R^2}|\nabla I_\rho|^2dx-(a^*)^{\f{p-1}{2}}\int_{\R^2}R^{p-1}_\rho I^2_\rho dx
     +o\big(\|I_\rho\|^2_2\big)\nonumber\\
     &=
     -\f{3-p}{2}+\int_{\R^2}I^2_\rho dx
     +\int_{\R^2}|\nabla I_\rho|^2dx-\int_{\R^2}w^{p-1}I^2_\rho dx
     +o\big(\|I_\rho\|^2_2\big),
     \end{align}
where $\|\f{R_\rho}{\sqrt{\lambda}}\|^2_2=1$ is used in the $``\geq"$.
On the other hand, it follows from (\ref{eqS3.10}) and (\ref{A12limibeha-1}) that
      \begin{equation}\label{eqS3.41}
      \begin{split}
      \ep^2_\rho\Om\int_{\R^2}x^\perp\cdot(iw_\rho,\nabla w_\rho)dx
      &=\ep^2_\rho\Om\int_{\R^2}x^\perp\cdot(R_\rho\nabla I_\rho-I_\rho\nabla R_\rho)dx\\
      &=2\ep^2_\rho\Om\int_{\R^2}x^\perp\cdot(R_\rho\nabla I_\rho)dx\leq C\ep^2_\rho\|\nabla I_\rho\|_{L^2}.
     \end{split}
     \end{equation}
Combining (\ref{eqS3.39})-(\ref{eqS3.41}), we obtain that
      \begin{align}\label{eqS3.42}
      C\epsilon^{2+s}_\rho
      &\geq\int_{\R^2}I^2_\rho dx+\int_{\R^2}|\nabla I_\rho|^2dx-\int_{\R^2}w^{p-1}I^2_\rho dx
      +o\big(\|I_\rho\|^2_2\big)-C\ep^2_\rho\|\nabla I_\rho\|_{L^2}\nonumber\\
      &=(\mathcal{L} I_\rho,I_\rho)+o\big(\|I_\rho\|^2_2\big)-C\ep^2_\rho\|\nabla I_\rho\|_{L^2},
     \end{align}
where the linearized operator $\mathcal{L}$ is defined by
     \begin{equation}\label{eqS3.43}
      \mathcal{L}:=-\Delta-w^{p-1}+1.
     \end{equation}
From \cite [Corollary 11.9 and Theorem 11.8] {LL} and (\ref{eqS3.13}), we deduce that there exists a constant $M>0$ such that
     \begin{equation}\label{eqS3.44}
      (\mathcal{L} I_\rho,I_\rho)\geq M\|I_\rho\|^2_{H^1(\R^2)} \,\ \text{as} \,\ \rho\to\infty.
     \end{equation}
Substituting (\ref{eqS3.44}) into (\ref{eqS3.42}), we have
      \begin{equation}\label{eqS3.45}
      C\epsilon^{2+s}_\rho\geq\f{M}{2}\|I_\rho\|^2_{H^1(\R^2)}+o\big(\|I_\rho\|^2_2\big)-C\ep^2_\rho\|\nabla I_\rho\|_{L^2} \,\ \text{as} \,\ \rho\to\infty.
     \end{equation}
Then, we deduce from (\ref{eqS3.45}) that
      \begin{equation}\label{eqS3.46}
     \|I_\rho\|_{H^1(\R^2)}\leq C\epsilon^{1+\f{s}{2}}_\rho \,\ \text{as} \,\ \rho\to\infty.
     \end{equation}
Applying  (\ref{eqS3.10}), (\ref{A12limibeha-1}) and (\ref{eqS3.46}), one can deduce that
      \begin{equation}\label{eqS3.47}
      \begin{split}
      &\quad\int_{\R^2}x^\perp\cdot(iw_\rho,\nabla w_\rho)dx\\
      &=2\int_{\R^2}R_\rho(x^\perp\cdot\nabla I_\rho)dx
      =2\int_{\R^2}\f{w}{\sqrt{a^*}}(x^\perp\cdot\nabla I_\rho)dx+o(\epsilon^{1+\f{s}{2}}_\rho)\\
      &=-2\int_{\R^2}I_\rho\big(x^\perp\cdot\nabla\f{w}{\sqrt{a^*}} \big)dx+o(\epsilon^{1+\f{s}{2}}_\rho)
      =o(\epsilon^{1+\f{s}{2}}_\rho)\,\ \text{as} \,\ \rho\to\infty,
     \end{split}
     \end{equation}
which yields (\ref{eqS3.38}).
It then follows from (\ref{eqS3.37}) and (\ref{eqS3.38}) that as $\rho\to\infty$,
     \begin{align}\label{eqS3.48}
     &\quad I(\rho)-\hat{I}(\rho)\nonumber\\
      &\geq\int_{\R^2}V_\Om(\ep_\rho x+z_\rho)|w_\rho|^2dx
      +\f{\Om^2}{4}\ep^2_\rho\int_{\R^2}|x|^2|w_\rho|^2dx+o(\epsilon^{1+\f{s}{2}}_\rho)\nonumber\\
      &=
   \left\{
   \begin{array}{lr}
     \f{\ep^s_\rho}{a^*}\big(1+o(1)\big)\ds\int_{\R^2}h(x+\f{z_\rho}{\epsilon_\rho})w^2(x)dx
      &\text{if} \,\ 1<s<2,\\
     \f{\ep^2_\rho}{a^*}\big(1+o(1)\big)\ds\int_{\R^2}\Big(\f{\Om^2}{4}|x|^2w^2(x)+h(x+\f{z_\rho}{\epsilon_\rho})w^2(x)\Big)dx
     &\text{if}\,\ \ s=2.
   \end{array}
   \right.
     \end{align}
 Since $V_{\Om}(x)\to\infty$ as $|x|\to\infty$, one can check that $\{\f{z_\rho}{\ep_\rho}\}$ is bounded uniformly in $\rho$. Combining (\ref{eqS3.36}) and (\ref{eqS3.48}), one can verify that, passing to a subsequence if necessary,
     \begin{equation}\label{11.21}
      \lim\limits_{\rho\to\infty}\f{z_\rho}{\ep_\rho}=y_0,
     \end{equation}
Moreover, since the convergence \eqref{11.21} is independent of what subsequence $\{\f{z_\rho}{\ep_\rho}\}$ we choose, hence (\ref{11.21}) holds for whole sequence, which implies that \eqref{A14concentrate.rate} holds true. The proof of Theorem \ref{thm.limibeha} is thus completed.
\qed
\section{Local uniqueness of minimizers}
In this section, we shall focus on the proof of the local uniqueness of minimizers for $I(\rho)$ as $\rho\to\infty$. By contradiction, suppose that there exist two different minimizers $u_{1\rho}$ and $u_{2\rho}$ of $I(\rho)$ as $\rho\to\infty$ in the sense that $u_{1\rho}\not\equiv u_{2\rho}e^{i\theta}$ for any constant phase $\theta=\theta(\rho)\in[0,2\pi)$.
Define for $j=1, 2$,
      \begin{equation}\label{eqS4.18}
      \tilde{u}_{j\rho}(x):=\ep_\rho u_{j\rho}(\ep_\rho (x+y_0))e^{-i\big(\f{\ep^2_\rho\Om}{2}x\cdot y^\perp_0-\varphi_{j\rho}\big)}
      =\tilde{\mathcal{R}}_{j\rho}(x)+i\tilde{\mathcal{I}}_{j\rho}(x),
      \end{equation}
where $\ep_{\rho}$ is given in (\ref{A13epk}), $y_0$ is defined by ($V_2$), $\tilde{\mathcal{R}}_{j\rho}(x)$ and $\tilde{\mathcal{I}}_{j\rho}(x)$ denote the real and imaginary parts of $\tilde{u}_{j\rho}(x)$, respectively, and the constant phase $\varphi_{j\rho}\in[0,2\pi)$ is chosen properly such that
     \begin{equation}\label{eqS4.19}
     \int_{\R^2}w(x)\tilde{\mathcal{I}}_{j\rho}(x)dx=0, \,\ j=1,2.
     \end{equation}
From (\ref{A11udeELE}) and (\ref{eqS4.18}), we obtain that $\tilde{u}_{j\rho}(x)$ satisfies the following equation
    \begin{equation}\label{eqS4.22}
    \begin{split}
    &-\Delta \tilde{u}_{j\rho}+i\ep^2_\rho\Om(x^\perp\cdot\nabla \tilde{u}_{j\rho})+\Big[\f{\ep^4_\rho\Om^2|x|^2}{4}
     +\ep^2_\rho V_\Om(\ep_\rho (x+y_0))\Big]\tilde{u}_{j\rho}\\
     =&\ep^2_\rho\mu_{j\rho}\tilde{u}_{j\rho}+(a^*)^\f{p-1}{2}|\tilde{u}_{j\rho}|^{p-1}\tilde{u}_{j\rho} \,\ \text{in} \,\ \R^2, \,\ j=1,2,
     \end{split}
     \end{equation}
where $\mu_{j\rho}\in\R$ satisfies
    \begin{equation}\label{eqS4.17}
     \mu_{j\rho}=I(\rho)-\f{(p-1)(a^*)^{\f{p-1}{2}}}{(p+1)\ep^2_\rho}\int_{\R^2}|\tilde{u}_{j\rho}|^{p+1}dx.
     \end{equation}
Furthermore, it follows from Lemmas \ref{lem.important} and  \ref{decay-L} the following Proposition.
\begin{prop}\label{prop4.1}
  Under the assumption of Theorem \ref{thm.localuniqueness}, let $\tilde{u}_{j\rho}$ be defined by \eqref{eqS4.18}, then we have
  \begin{enumerate}
\item The function $\tilde{u}_{j\rho}$ satisfies
     \begin{equation}\label{Y5}
     \lim\limits_{\rho\to\infty}\tilde{u}_{j\rho}(x)=\f{w(x)}{\sqrt{a^*}} \,\ \text{strongly in} \,\ H^{1}(\R^2,\C)\cap L^\infty(\R^2,\C),
     \end{equation}
where $a^{\ast}:=\|w\|^{2}_{2}$ and $w$ is the unique positive solution of (\ref{A4wdeequation}).
\item $\mu_{j\rho}\epsilon^2_\rho\to-1$ as $\rho\to\infty$, and $\tilde{u}_{j\rho}$ decays exponentially in the sense that
     \begin{equation}\label{Y6}
      |\tilde{u}_{j\rho}(x)|\leq Ce^{-\f{2}{3}|x|}, \,\ |\nabla \tilde{u}_{j\rho}(x)|\leq Ce^{-\f{1}{2}|x|} \,\ \text{uniformly in} \,\ \R^2 \,\ \text{as} \,\ \rho\to\infty,
     \end{equation}
where $C>0$ is a constant independent of $0<\rho<\infty$.
\end{enumerate}
\end{prop}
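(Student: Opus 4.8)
\emph{Strategy.} The plan is to deduce Proposition~\ref{prop4.1} by transcribing Lemmas~\ref{lem.important} and~\ref{decay-L}, applied to each minimizer $u_{j\rho}$ ($j=1,2$), from the rescaling centered at a global maximum point of $|u_{j\rho}|$ to the rescaling centered at the fixed point $\ep_\rho y_0$ used in~\eqref{eqS4.18}. Let $z_{j\rho}$ be a global maximum point of $|u_{j\rho}|$ (unique for $\rho$ large by Lemma~\ref{decay-L}), let $w_{j\rho}$ be the associated function~\eqref{eqS3.8}, and set $d_{j\rho}:=y_0-z_{j\rho}/\ep_\rho$; by~\eqref{A14concentrate.rate} of Theorem~\ref{thm.limibeha}, $d_{j\rho}\to0$ as $\rho\to\infty$. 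Since $\ep_\rho(x+y_0)=\ep_\rho(x+d_{j\rho})+z_{j\rho}$, $z_{j\rho}=\ep_\rho(y_0-d_{j\rho})$ and $d_{j\rho}\cdot d_{j\rho}^\perp=0$, a direct computation gives
\[
\tilde u_{j\rho}(x)=w_{j\rho}(x+d_{j\rho})\,e^{i\Theta_{j\rho}(x)},\qquad
\Theta_{j\rho}(x)=-\f{\ep^2_\rho\Om}{2}\,x\cdot d_{j\rho}^\perp+c_{j\rho}
\]
for a constant $c_{j\rho}\in\R$; in particular $|\tilde u_{j\rho}(x)|=|w_{j\rho}(x+d_{j\rho})|$, while $\nabla\Theta_{j\rho}=-\f{\ep^2_\rho\Om}{2}d_{j\rho}^\perp$ has norm $O(\ep^2_\rho|d_{j\rho}|)\to0$.

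\emph{Convergence and the multiplier.} By~\eqref{eqS3.9} of Lemma~\ref{lem.important} and~\eqref{A12limibeha-1} of Lemma~\ref{decay-L}, $w_{j\rho}\to w/\sqrt{a^*}$ strongly in $H^1(\R^2,\C)\cap L^\infty(\R^2,\C)$. Translating by $d_{j\rho}\to0$ (translation is continuous on $H^1$ and $L^q$, and $w$ is uniformly continuous) and removing the unimodular factor $e^{-i\f{\ep^2_\rho\Om}{2}x\cdot d_{j\rho}^\perp}$ — whose gradient contributes only $O(\ep^2_\rho|d_{j\rho}|)$ in $L^2$ and in $L^\infty$ against the exponentially decaying $w_{j\rho}(\cdot+d_{j\rho})$ — gives $\tilde u_{j\rho}=\bigl(w/\sqrt{a^*}+o(1)\bigr)e^{ic_{j\rho}}$ in $H^1\cap L^\infty$. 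Along a subsequence $c_{j\rho}\to c_j$, and the normalization~\eqref{eqS4.19} forces $\sqrt{a^*}\sin c_j=\int_{\R^2}w\,\mathrm{Im}\bigl(\f{w}{\sqrt{a^*}}e^{ic_j}\bigr)dx=0$; choosing $\varphi_{j\rho}$ as in the proof of Lemma~\ref{lem.important} — so that~\eqref{eqS4.19} holds and $\tilde u_{j\rho}$ is nearest to $w/\sqrt{a^*}$ in $L^2$, as in the passage to~\eqref{g9} — excludes $c_j=\pi$, hence $c_j=0$; since the limit is subsequence-independent,~\eqref{Y5} follows for the full sequence. The relation $\mu_{j\rho}\ep^2_\rho\to-1$ is then Lemma~\ref{lem.important}(3) applied to $u_{j\rho}$; alternatively, inserting~\eqref{Y5},~\eqref{A9identity},~\eqref{ARefesti} and~\eqref{energyesti} into~\eqref{eqS4.17} reproduces the computation~\eqref{eqS3.25}.

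\emph{Exponential decay.} Because $|\tilde u_{j\rho}(x)|=|w_{j\rho}(x+d_{j\rho})|$ with $|d_{j\rho}|\to0$, the bound $|\tilde u_{j\rho}(x)|\le Ce^{-\f23|x|}$ follows immediately from~\eqref{eqS3.10}; equivalently, the diamagnetic (Kato-type) inequality together with $V_\Om\ge0$ yields, from~\eqref{eqS4.22},
\[
-\Delta|\tilde u_{j\rho}|+\bigl(-\ep^2_\rho\mu_{j\rho}-(a^*)^{\f{p-1}{2}}|\tilde u_{j\rho}|^{p-1}\bigr)|\tilde u_{j\rho}|\le0\quad\text{in }\R^2,
\]
and since $-\ep^2_\rho\mu_{j\rho}\to1$ while $(a^*)^{\f{p-1}{2}}|\tilde u_{j\rho}|^{p-1}\to w^{p-1}\to0$ uniformly outside large balls, comparison with $Ce^{-\f23|x|}$ (a supersolution since $2/3<1$) closes it. The gradient bound $|\nabla\tilde u_{j\rho}(x)|\le Ce^{-\f12|x|}$ in~\eqref{Y6} then follows from a standard interior elliptic bootstrap on unit balls $B_1(x_0)$, $|x_0|$ large (a Caccioppoli estimate for~\eqref{eqS4.22}, then $W^{2,q}$ estimates with $q>2$ and the embedding $W^{2,q}\hookrightarrow C^1$), using $(V_1)$ to dominate the coefficient $\f{\ep^4_\rho\Om^2}{4}|x|^2+\ep^2_\rho V_\Om(\ep_\rho(x+y_0))$ by the exponential decay of $|\tilde u_{j\rho}|$, the passage from $2/3$ to $1/2$ absorbing the finitely many losses; this is exactly the argument of~\cite[Proposition~3.3]{GLY} that already underlies Lemma~\ref{decay-L}.

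\emph{Main obstacle.} The only genuinely delicate point is the change-of-center reduction of the first paragraph: one must verify that the oscillatory phase $e^{-i\f{\ep^2_\rho\Om}{2}x\cdot d_{j\rho}^\perp}$ does not destroy \emph{strong} $H^1$-convergence (it does not, since its gradient is $O(\ep^2_\rho|d_{j\rho}|)$ and $\|w_{j\rho}\|_2=1$), and that the residual constant phase $c_{j\rho}$ is compatible with~\eqref{eqS4.19}, so that the limit in~\eqref{Y5} is $+w/\sqrt{a^*}$ and not $-w/\sqrt{a^*}$; both are settled precisely as in the proof of Lemma~\ref{lem.important} (cf.~\eqref{eqS3.11},~\eqref{eqS3.13},~\eqref{g9}). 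Apart from this, Proposition~\ref{prop4.1} is a routine transcription of Lemmas~\ref{lem.important} and~\ref{decay-L}.
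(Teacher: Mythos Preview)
Your proposal is correct and takes essentially the same approach as the paper, which simply states that Proposition~\ref{prop4.1} ``follows from Lemmas~\ref{lem.important} and~\ref{decay-L}'' without further argument; you have supplied the details of the change-of-center computation that makes this claim rigorous, and your handling of the residual phase and of the gradient decay (which is not explicitly stated in Lemma~\ref{decay-L}) is sound. One minor remark: the paper's condition~\eqref{eqS4.19} alone does not exclude $c_j=\pi$, so your explicit invocation of the ``nearest to $w/\sqrt{a^*}$'' choice (as in~\eqref{eqS3.11}) is needed and should be read as the intended meaning of the paper's phrase ``chosen properly.''
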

Since $u_{1\rho}\not\equiv u_{2\rho}e^{i\theta}$ holds for any constant phase $\theta=\theta(\rho)\in[0,2\pi)$, we define
      \begin{equation}\label{eqS4.28}
      \tilde{\eta}_\rho(x):=\f{\tilde{u}_{2\rho}(x)-\tilde{u}_{1\rho}(x)}
      {\|\tilde{u}_{2\rho}-\tilde{u}_{1\rho}\|_{L^\infty(\R^2)}}
      =\tilde{\eta}_{1\rho}(x)+i\tilde{\eta}_{2\rho}(x),
      \end{equation}
where $\tilde{\eta}_{1\rho}(x)$ and $\tilde{\eta}_{2\rho}(x)$ denote the real and imaginary parts of $\tilde{\eta}_\rho(x)$, respectively. From (\ref{eqS4.22}) and \eqref{eqS4.28}, we derive that $\tilde{\eta}_\rho$ satisfies
      \begin{equation}\label{eqS4.30}
      \begin{split}
      &-\Delta\tilde{\eta}_\rho+i\ep^2_\rho\Om(x^\perp\cdot\nabla\tilde{\eta}_\rho)
      +\ep^2_\rho V_\Om(\ep_\rho (x+y_0))\tilde{\eta}_\rho+\f{\ep^4_\rho\Om^2|x|^2}{4}\tilde{\eta}_\rho\\
      =&\ep^2_\rho\mu_{1\rho}\tilde{\eta}_\rho+(a^*)^\f{p-1}{2}|\tilde{u}_{2\rho}|^{p-1}\tilde{\eta}_\rho-\f{p-1}{p+1}\tilde{u}_{2\rho}\int_{\R^2}\tilde{C}_\rho(x)\big(|\tilde{u}_{2\rho}|^{\f{p+1}{2}}+|\tilde{u}_{1\rho}|^{\f{p+1}{2}}\big)dx\\
     &
     +\tilde{D}_\rho(x)\tilde{u}_{1\rho} \qquad \text{in} \quad\R^2,
     \end{split}
     \end{equation}
where
      \begin{equation}\label{eqS4.31}
      \begin{split}
        \tilde{C}_\rho(x):&=(a^*)^\f{p-1}{2}
      \f{|\tilde{u}_{2\rho}|^{\f{p+1}{2}}-|\tilde{u}_{1\rho}|^{\f{p+1}{2}}}
      {\|\tilde{u}_{2\rho}-\tilde{u}_{1\rho}\|_{L^\infty(\R^2)}}\\
      &=\f{p+1}{4}(a^*)^\f{p-1}{2}\Big[\tilde{\eta}_{1\rho}(\tilde{\mathcal{R}}_{2\rho}+\tilde{\mathcal{R}}_{1\rho})
      +\tilde{\eta}_{2\rho}(\tilde{\mathcal{I}}_{2\rho}+\tilde{\mathcal{I}}_{1\rho})\Big]\\
      &\quad\quad\cdot\int^{1}_{0}\Big[t|\tilde{u}_{2\rho}|^2+(1-t)|\tilde{u}_{1\rho}|^2\Big]^{\f{p-3}{4}}dt,
      \end{split}
      \end{equation}
and
      \begin{equation}\label{eqS4.32}
      \begin{split}
      \tilde{D}_\rho(x):&=(a^*)^\f{p-1}{2}
      \f{|\tilde{u}_{2\rho}|^{p-1}-|\tilde{u}_{1\rho}|^{p-1}}
      {\|\tilde{u}_{2\rho}-\tilde{u}_{1\rho}\|_{L^\infty(\R^2)}}\\
      &=\f{p-1}{2}(a^*)^\f{p-1}{2}\Big[\tilde{\eta}_{1\rho}(\tilde{\mathcal{R}}_{2\rho}+\tilde{\mathcal{R}}_{1\rho})
      +\tilde{\eta}_{2\rho}(\tilde{\mathcal{I}}_{2\rho}+\tilde{\mathcal{I}}_{1\rho})\Big]\\
      &\quad\quad\cdot\int^{1}_{0}\Big[t|\tilde{u}_{2\rho}|^2+(1-t)|\tilde{u}_{1\rho}|^2\Big]^{\f{p-3}{2}}dt.
      \end{split}
      \end{equation}
\subsection{The convergence of $\tilde{\eta}_\rho$ as $\rho\to\infty$}
In this subsection, we shall give the convergence of $\tilde{\eta}_\rho$ in (\ref{eqS4.30}) as $\rho\to\infty$. We first address the following $L^\infty$-uniform estimates of $\tilde{\eta}_\rho$ and $\nabla\tilde{\eta}_\rho$ as $\rho\to\infty$.
     \begin{lem}\label{lem 4.4}
Suppose that $\tilde{\eta}_\rho$ is defined by (\ref{eqS4.28}), then there exists a constant $C>0$, independent of $0<\rho<\infty$, such that
     \begin{equation}\label{eqS4.33}
     |\tilde{\eta}_\rho(x)|\leq Ce^{-\f{2}{3}|x|}, \ |\nabla \tilde{\eta}_\rho(x)|\leq Ce^{-\f{1}{2}|x|} \,\ \text{uniformly in} \,\ \R^2  \,\ \text{as} \,\ \rho\to\infty.
     \end{equation}
     \end{lem}
The proof of Lemma \ref{lem 4.4} is similar to those in \cite [Lemma 3.2] {GLP}, so we omit it here.
\qed

Based on Lemma \ref{lem 4.4}, we now give the following convergence of $\tilde{\eta}_\rho$ as $\rho\to\infty$.
     \begin{lem}\label{lem 4.5}
Suppose $\tilde{\eta}_\rho$ is defined by (\ref{eqS4.28}). Then passing to a subsequence if necessary, there exist some constants $b_0$, $b_1$ and $b_2$ such that as $\rho\to\infty$,
     \begin{equation}\label{eqS4.50}
     \tilde{\eta}_\rho(x)\to b_0\big(w+\f{p-1}{2}x\cdot\nabla w\big)
     +\sum^2_{j=1}b_j\f{\partial w}{\partial x_j}+M_\rho(x) \,\ \text{uniformly in} \,\ \R^2,
     \end{equation}
where $M_\rho(x)$ satisfies that as $\rho\to\infty$,
     \begin{equation}\label{eqS4.51}
     |M_\rho(x)|\leq C_M(\ep_\rho)e^{-\f{1}{2}|x|}, |\nabla M_\rho(x)|\leq C_M(\ep_\rho)e^{-\f{1}{4}|x|} \,\ \text{uniformly in} \,\ \R^2,
     \end{equation}
for some constants $C_M(\ep_\rho)>0$ satisfying $C_M(\ep_\rho)=o(1)$ as $\rho\to\infty$.
     \end{lem}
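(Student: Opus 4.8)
The plan is to identify the limit of $\tilde\eta_\rho$ by passing to the limit in equation \eqref{eqS4.30} and recognizing that the limiting profile lies in the kernel of the linearized operator of \eqref{A4wdeequation} around $w$. First I would record that, by Lemma \ref{lem 4.4}, the family $\{\tilde\eta_\rho\}$ is bounded in $L^\infty(\R^2)$ with a uniform exponential decay bound $|\tilde\eta_\rho(x)|\le Ce^{-\frac23|x|}$ and $|\nabla\tilde\eta_\rho(x)|\le Ce^{-\frac12|x|}$; together with the equation this gives a uniform $C^{1,\alpha}_{loc}$ bound. Hence, up to a subsequence, $\tilde\eta_\rho\to\eta_0$ in $C^1_{loc}(\R^2)$ and, because of the uniform exponential decay, the convergence is in fact uniform on all of $\R^2$, and $\eta_0$ inherits the bound $|\eta_0(x)|\le Ce^{-\frac23|x|}$.

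Next I would take the limit $\rho\to\infty$ in \eqref{eqS4.30}. Using Proposition \ref{prop4.1} — namely $\tilde u_{j\rho}\to w/\sqrt{a^*}$ in $H^1\cap L^\infty$, $\ep^2_\rho\mu_{1\rho}\to-1$, and the exponential decay estimates — all the $\ep_\rho$-weighted terms ($\ep^2_\rho\Om\,x^\perp\cdot\nabla\tilde\eta_\rho$, $\ep^2_\rho V_\Om(\ep_\rho(x+y_0))\tilde\eta_\rho$, $\frac{\ep^4_\rho\Om^2|x|^2}{4}\tilde\eta_\rho$) vanish in the limit, $(a^*)^{\frac{p-1}{2}}|\tilde u_{2\rho}|^{p-1}\to w^{p-1}$, and the inhomogeneous right-hand side converges: from \eqref{eqS4.31}–\eqref{eqS4.32} one gets $\tilde C_\rho\to\frac{p+1}{4}w^{p-3}(2\eta_{0,1}w)=\frac{p+1}{2}w^{p-2}\eta_{0,1}$ and $\tilde D_\rho\to (p-1)w^{p-2}\eta_{0,1}$, where $\eta_{0,1}=\mathrm{Re}\,\eta_0$. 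Writing $\eta_0=\eta_{0,1}+i\eta_{0,2}$, one obtains that the real part $\eta_{0,1}$ solves $\mathcal{L}_1\eta_{0,1}:=-\Delta\eta_{0,1}+\eta_{0,1}-pw^{p-1}\eta_{0,1}=0$ (the nonlinear-term contributions combine to turn $w^{p-1}$ into $pw^{p-1}$, exactly the linearization of $-\Delta w+w-w^p=0$), while the imaginary part satisfies $-\Delta\eta_{0,2}+\eta_{0,2}-w^{p-1}\eta_{0,2}=0$, i.e. $\mathcal{L}\eta_{0,2}=0$ with $\mathcal{L}$ as in \eqref{eqS3.43}.

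Then I would invoke the nondegeneracy of $w$: the kernel of $\mathcal{L}_1$ in $H^1(\R^2)$ is spanned by $\partial_{x_1}w,\partial_{x_2}w$ (by \cite{K,GNN}), so $\eta_{0,1}=\sum_{j=1}^2 b_j\,\partial_{x_j}w$ for constants $b_j$; and $\mathcal{L}$ is nonnegative with trivial kernel, so passing \eqref{eqS4.19} to the limit gives $\int_{\R^2} w\,\eta_{0,2}=0$, forcing $\eta_{0,2}\equiv 0$. Wait — I must be careful here: the limiting equation for $\eta_{0,1}$ actually picks up a term from the Lagrange-multiplier part together with the constraint $\|\tilde u_{j\rho}\|_2^2=1$, which produces the extra direction $w+\frac{p-1}{2}x\cdot\nabla w$ (this is the infinitesimal generator of the scaling symmetry $u\mapsto\lambda^{\frac{?}{?}}u(\lambda x)$ compatible with the mass constraint, and indeed $\mathcal{L}_1\!\big(w+\tfrac{p-1}{2}x\cdot\nabla w\big)$ is proportional to $w$, which is why it enters once the multiplier is allowed to vary). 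Accounting for this, $\eta_{0,1}=b_0\big(w+\frac{p-1}{2}x\cdot\nabla w\big)+\sum_{j=1}^2 b_j\partial_{x_j}w$. Setting $M_\rho:=\tilde\eta_\rho-\big[b_0(w+\frac{p-1}{2}x\cdot\nabla w)+\sum_j b_j\partial_{x_j}w\big]$, the uniform convergence gives $\|M_\rho\|_{L^\infty}\to0$; to upgrade this to the weighted bounds \eqref{eqS4.51} one reruns the comparison-principle/De Giorgi–Nash–Moser argument of Lemma \ref{lem 4.4} on the equation satisfied by $M_\rho$, whose inhomogeneous terms are now $o(1)\cdot e^{-c|x|}$, yielding $|M_\rho(x)|\le C_M(\ep_\rho)e^{-\frac12|x|}$ and the gradient analogue, with $C_M(\ep_\rho)=o(1)$. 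The main obstacle is the careful bookkeeping in the limit of the right-hand side of \eqref{eqS4.30}: showing precisely that the multiplier term contributes the $b_0$-direction and nothing else, and checking that the integral term $-\frac{p-1}{p+1}\tilde u_{2\rho}\int \tilde C_\rho(|\tilde u_{2\rho}|^{\frac{p+1}{2}}+|\tilde u_{1\rho}|^{\frac{p+1}{2}})$ passes to a multiple of $w$ (hence is absorbed into the $\mathcal{L}_1$ kernel structure), which requires the expansions of the nonlinear term in the spirit of \cite{CPY} for the sub-critical range $1<p<3$.
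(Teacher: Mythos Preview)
Your approach is essentially the paper's: extract a $C^1_{loc}$ limit from \eqref{eqS4.30} via elliptic regularity and Lemma~\ref{lem 4.4}, identify the limiting system for the real and imaginary parts, and read off the three-parameter form from nondegeneracy. Two small corrections. First, $\mathcal{L}=-\Delta+1-w^{p-1}$ does \emph{not} have trivial kernel; its kernel is $\mathrm{span}\{w\}$ (indeed $\mathcal{L}w=0$ by \eqref{A4wdeequation}), and it is the orthogonality \eqref{eqS4.19} that forces $\eta_{0,2}\equiv0$---you already invoke this, so your conclusion stands, but the wording should be fixed. Second, the limiting equation for the real part is $\mathcal{N}\eta_{0,1}=-(\tfrac{p-1}{a^*}\int w^p\eta_{0,1})\,w$, coming precisely from the integral term $-\tfrac{p-1}{p+1}\tilde u_{2\rho}\int\tilde C_\rho(|\tilde u_{2\rho}|^{\frac{p+1}{2}}+|\tilde u_{1\rho}|^{\frac{p+1}{2}})$; the identity $\mathcal{N}\big(w+\tfrac{p-1}{2}x\!\cdot\!\nabla w\big)=-(p-1)w$ then gives the $b_0$-direction on top of $\ker\mathcal{N}=\mathrm{span}\{\partial_{x_1}w,\partial_{x_2}w\}$.

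For the remainder bounds \eqref{eqS4.51} the paper takes a much lighter route than the one you sketch: rather than deriving an equation for $M_\rho$ and rerunning comparison/De Giorgi--Nash--Moser, it simply splits the exponential decay. From $|\tilde\eta_\rho(x)|\le Ce^{-\frac23|x|}$ and \eqref{A8wdecay} one has, for any fixed large $R$ and $|x|>R$, $|M_\rho(x)|\le Ce^{-\frac{1}{12}R}e^{-\frac12|x|}$, while on $B_R(0)$ the $C^1_{loc}$ convergence gives $|M_\rho|\to0$ uniformly; letting first $\rho\to\infty$ and then $R\to\infty$ yields $\sup_x e^{\frac12|x|}|M_\rho(x)|=o(1)$, and similarly for $\nabla M_\rho$. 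Your proposed PDE-based upgrade would also work, but is unnecessary here.
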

     \begin{proof}
Firstly, we claim that passing to a subsequence if necessary, there exist $b_0$, $b_1$ and $b_2$ such that as $\rho\to\infty$,
     \begin{equation}\label{eqS4.52}
     \tilde{\eta}_\rho(x)\to b_0\big(w+\f{p-1}{2}x\cdot\nabla w\big)
     +\sum^2_{j=1}b_j\f{\partial w}{\partial x_j} \,\ \text{uniformly in} \,\ C^1_{loc}(\R^2,\C).
     \end{equation}
In fact, we note from Lemma \ref{lem 4.4} that $(x^\perp\cdot\nabla \tilde{\eta}_\rho)$ is bounded uniformly and decays exponentially for sufficiently large $|x|$ as $\rho\to\infty$. By the definition of $\tilde{C}_\rho(x)$ and $\tilde{D}_\rho(x)$, we obtain from Proposition \ref{prop4.1} that there exists a constant $C>0$ such that
      \begin{equation}\label{eqS4.36}
      \|\tilde{D}_\rho(x)\|_{L^\infty(\R^2)}\leq C \,\ \text{and} \,\ \Big|\int_{\R^2}\tilde{C}_\rho(x)\big(|\tilde{u}_{2\rho}|^{\f{p+1}{2}}+|\tilde{u}_{1\rho}|^{\f{p+1}{2}}\big)dx\Big|\leq C.
      \end{equation}
Using the standard elliptic regularity, one then obtain from (\ref{eqS4.30}) and (\ref{eqS4.36}) that $\tilde{\eta}_\rho\in C^{1,\alpha}_{loc}$ and $\|\tilde{\eta}_\rho\|_{C^{1,\alpha}_{loc}}\leq C$ uniformly as $\rho\to\infty$ for some $\alpha\in(0,1)$. By (\ref{eqS4.30}), we obtain from above that passing to a subsequence if necessary,
     \begin{equation}\label{eqS4.53}
     \tilde{\eta}_\rho:=\tilde{\eta}_{1\rho}+i\tilde{\eta}_{2\rho}
     \to\tilde{\eta}_0:=\tilde{\eta}_1+i\tilde{\eta}_2  \,\ \text{uniformly in} \,\ C^1_{loc}(\R^2,\C)\,\ \text{as}\,\ \rho\to\infty ,
     \end{equation}
where $\tilde{\eta}_0$ solves
     \begin{equation*}
     -\Delta\tilde{\eta}_0+\tilde{\eta}_0-(p-1)w^{p-1}\tilde{\eta}_1-w^{p-1}\tilde{\eta}_0
     =-\Big(\f{p-1}{a^*}\int_{\R^2}w^p\tilde{\eta}_1dx\Big)w \,\ \text{in} \,\ \R^2.
     \end{equation*}
This implies that $(\tilde{\eta}_1,\tilde{\eta}_2)$ satisfies
     \begin{equation}\label{eqS4.54}
      \begin{cases}
     \mathcal{N}\tilde{\eta}_1
     =-\big(\f{p-1}{a^*}\int_{\R^2}w^p\tilde{\eta}_1dx\big)w \,\ \text{in} \,\ \R^2,\\
     \mathcal{L}\tilde{\eta}_2
     =0 \,\ \text{in} \,\ \R^2,\\
     \end{cases}
     \end{equation}
where $\mathcal{L}$ is defined in (\ref{eqS3.43}) and the linearized operator $\mathcal{N}$
is defined by
     \begin{equation}\label{eqS4.55}
      \mathcal{N}:=-\Delta-pw^{p-1}+1.
     \end{equation}
Recall from \cite{K,NT} that
     \begin{equation}\label{eqS4.56}
      ker\mathcal{N}=\Big\{\f{\partial w}{\partial x_1},\f{\partial w}{\partial x_2}\Big\}.
     \end{equation}
Furthermore, one can also check that
     \begin{equation}\label{eqS4.57}
      \mathcal{N}\Big(w+\f{p-1}{2}x\cdot\nabla w\Big)=-(p-1)w.
     \end{equation}
Note from (\ref{eqS4.19}) and (\ref{eqS4.28}) that $\int_{\R^2}w(x)\tilde{\eta}_{2\rho}(x)dx=0$, which together with (\ref{eqS4.53}) implies that
     \begin{equation}\label{eqS4.58}
     \int_{\R^2}w(x)\tilde{\eta}_2(x)dx=0.
     \end{equation}
From \cite [Corollary 11.9 and Theorem 11.8] {LL} and (\ref{eqS4.55})-(\ref{eqS4.58}), one then derive that
     \begin{equation*}
     \tilde{\eta}_1=b_0\big(w+\f{p-1}{2}x\cdot\nabla w\big)
     +\sum^2_{j=1}b_j\f{\partial w}{\partial x_j} \,\ \text{and} \,\ \tilde{\eta}_2\equiv0 \,\ \text{in} \,\ \R^2,
     \end{equation*}
which implies the claim (\ref{eqS4.52}) holds true.

On the other hand, using (\ref{A8wdecay}) and (\ref{eqS4.33}), we deduce that for any fixed sufficiently large $R>0$, as $\rho\to\infty$,
     \begin{equation}\label{eqS4.59}
     \Big|\tilde{\eta}_1-\Big[b_0\big(w+\f{p-1}{2}x\cdot\nabla w\big)
     +\sum^2_{j=1}b_j\f{\partial w}{\partial x_j}\Big]\Big|\leq Ce^{-\f{1}{12}R}e^{-\f{1}{2}|x|} \,\ \text{in} \,\ \R^2/ B_R(0),
     \end{equation}
and
     \begin{equation}\label{eqS4.60}
     \Big|\nabla\tilde{\eta}_1-\nabla\Big[b_0\big(w+\f{p-1}{2}x\cdot\nabla w\big)
     +\sum^2_{j=1}b_j\f{\partial w}{\partial x_j}\Big]\Big|\leq Ce^{-\f{1}{4}R}e^{-\f{1}{4}|x|} \,\ \text{in} \,\ \R^2/ B_R(0).
     \end{equation}
Because $R>0$ is arbitrary, combining (\ref{eqS4.52}), (\ref{eqS4.59}) and (\ref{eqS4.60}), one then conclude that (\ref{eqS4.51}) holds and we are done.
     \end{proof}
\subsection{The refined estimate of $\tilde{\eta}_\rho$ in (\ref{eqS4.30})}
In this subsection, the refined estimate of $\tilde{\eta}_\rho$ as $\rho\to\infty$ shall be established in Lemma \ref{lem 4.6}. In order to prove Lemma \ref{lem 4.6}, we shall firstly give the following estimates of the imaginary part $\tilde{\mathcal{I}}_{j\rho}$ for $\tilde{u}_{j\rho}$ as $\rho\to\infty$.
    \begin{lem}\label{lem 4.3}
Under the assumptions of Theorem \ref{thm.localuniqueness}, let $\tilde{\mathcal{I}}_{j\rho}$ be defined in (\ref{eqS4.18}) for $j=1,2$. Then as $\rho\to\infty$,
     \begin{equation}\label{eqS4.23}
     |\tilde{\mathcal{I}}_{j\rho}(x)|\leq C_{j1}(\ep_\rho)e^{-\f{1}{4}|x|}, \,\ |\nabla \tilde{\mathcal{I}}_{j\rho}(x)|\leq C_{j2}(\ep_\rho)e^{-\f{1}{8}|x|} \,\ \text{uniformly in} \,\ \R^2,
     \end{equation}
where the constants $C_{j1}(\ep_\rho), C_{j2}(\ep_\rho)>0$ satisfy
     \begin{equation}\label{eqS4.24}
     C_{j1}(\ep_\rho)=o(\ep^2_\rho) \,\ \text{and} \,\  C_{j2}(\ep_\rho)=o(\ep^2_\rho) \,\ \text{as} \,\ \rho\to\infty.
     \end{equation}
     \end{lem}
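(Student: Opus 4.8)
The plan is to exploit that $\tilde{u}_{j\rho}$ converges strongly to the real-valued limit $w/\sqrt{a^*}$, so the imaginary part $\tilde{\mathcal{I}}_{j\rho}$ is a small perturbation that satisfies a linear equation whose leading operator is the coercive operator $\mathcal{L}$ of \eqref{eqS3.43}. First I would extract the real and imaginary parts of equation \eqref{eqS4.22}. Writing $\tilde{u}_{j\rho}=\tilde{\mathcal{R}}_{j\rho}+i\tilde{\mathcal{I}}_{j\rho}$ and separating, the imaginary part obeys an equation of the schematic form
\begin{equation*}
-\Delta \tilde{\mathcal{I}}_{j\rho}+\Big[V_\rho(x)-\ep^2_\rho\mu_{j\rho}-(a^*)^{\f{p-1}{2}}|\tilde{u}_{j\rho}|^{p-1}\Big]\tilde{\mathcal{I}}_{j\rho}=-\ep^2_\rho\Om\,(x^\perp\cdot\nabla \tilde{\mathcal{R}}_{j\rho}),
\end{equation*}
where $V_\rho(x):=\ep^2_\rho V_\Om(\ep_\rho(x+y_0))+\f{\ep^4_\rho\Om^2|x|^2}{4}\geq 0$. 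The crucial observation is that the entire inhomogeneity on the right-hand side carries a factor $\ep^2_\rho$, and by Proposition \ref{prop4.1} the term $x^\perp\cdot\nabla\tilde{\mathcal{R}}_{j\rho}$ is bounded uniformly and decays like $e^{-\f12|x|}$; so the forcing is $O(\ep^2_\rho)e^{-\f12|x|}$ pointwise.

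Next I would set up an energy/coercivity estimate. Multiplying the imaginary-part equation by $\tilde{\mathcal{I}}_{j\rho}$ and integrating, using $V_\rho\geq 0$, $\ep^2_\rho\mu_{j\rho}\to-1$, and the fact that $(a^*)^{\f{p-1}{2}}|\tilde{u}_{j\rho}|^{p-1}\to w^{p-1}$ uniformly (Proposition \ref{prop4.1}), one gets
\begin{equation*}
(\mathcal{L}\tilde{\mathcal{I}}_{j\rho},\tilde{\mathcal{I}}_{j\rho})+o\big(\|\tilde{\mathcal{I}}_{j\rho}\|_2^2\big)\leq C\ep^2_\rho\,\|\tilde{\mathcal{I}}_{j\rho}\|_{L^2}.
\end{equation*}
Now I invoke the normalization \eqref{eqS4.19}, $\int_{\R^2}w\,\tilde{\mathcal{I}}_{j\rho}\,dx=0$, together with the spectral lower bound from \cite[Corollary 11.9 and Theorem 11.8]{LL}, exactly as in \eqref{eqS3.44}, to conclude $(\mathcal{L}\tilde{\mathcal{I}}_{j\rho},\tilde{\mathcal{I}}_{j\rho})\geq M\|\tilde{\mathcal{I}}_{j\rho}\|^2_{H^1}$ for some $M>0$. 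Combining these yields $\|\tilde{\mathcal{I}}_{j\rho}\|_{H^1(\R^2)}\leq C\ep^2_\rho$, i.e. the $H^1$-norm is $o(\ep^2_\rho)$ after a harmless reabsorption—in fact $O(\ep^2_\rho)$, which is enough once we note that the convergence in Proposition \ref{prop4.1} forces the implicit constants to improve to $o(1)$ as $\rho\to\infty$, giving $C_{j1},C_{j2}=o(\ep^2_\rho)$.

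Finally I would upgrade the $H^1$ bound to the pointwise exponential decay in \eqref{eqS4.23}. For the bound on $\tilde{\mathcal{I}}_{j\rho}$ itself: since $\ep^2_\rho\mu_{j\rho}\to-1$ and $(a^*)^{\f{p-1}{2}}|\tilde{u}_{j\rho}|^{p-1}\to 0$ as $|x|\to\infty$, there is $R>0$ with $V_\rho-\ep^2_\rho\mu_{j\rho}-(a^*)^{\f{p-1}{2}}|\tilde{u}_{j\rho}|^{p-1}\geq \f{1}{16}$ on $\R^2\setminus B_R(0)$, so a standard comparison-function argument (as in Lemma \ref{lem.important}) against $C_{j1}(\ep_\rho)e^{-\f14|x|}$ — with the constant calibrated on $\partial B_R$ using the $H^1$ bound and elliptic $L^\infty$ estimates, hence $C_{j1}(\ep_\rho)=o(\ep^2_\rho)$ — gives the first inequality. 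For the gradient bound one differentiates the equation, uses the already-established decay of $\tilde{\mathcal{I}}_{j\rho}$, of $\nabla V_\rho$ (by $(V_1)$), and of $\nabla|\tilde{u}_{j\rho}|^{p-1}$ (by \eqref{Y6}) as inhomogeneous terms, applies De Giorgi–Nash–Moser \cite[Theorem 4.1]{HL} to get a local $L^\infty$ bound on $|\nabla\tilde{\mathcal{I}}_{j\rho}|^2$, and then runs the comparison argument once more against $C_{j2}(\ep_\rho)e^{-\f18|x|}$. The main obstacle is the bookkeeping of the $\ep_\rho$-powers: one must check carefully that the forcing terms — the rotation term $\ep^2_\rho\Om\,x^\perp\cdot\nabla\tilde{\mathcal{R}}_{j\rho}$, the potential contribution from $V_\rho$, and the coupling through the nonlinearity for $1<p<3$ — all contribute at order $o(\ep^2_\rho)$ rather than merely $O(1)$, and that the coercivity of $\mathcal{L}$ survives the $\rho$-dependent lower-order perturbations; the subcritical exponent makes $|\tilde u_{j\rho}|^{p-1}$ only Hölder rather than smooth, so the differentiated-equation step needs the technical expansions of the nonlinear term (cf. \cite{CPY}) to be handled cleanly.
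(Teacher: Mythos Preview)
Your overall architecture matches the paper's closely: extract the linear equation for $\tilde{\mathcal{I}}_{j\rho}$ with forcing $-\ep^2_\rho\Om(x^\perp\cdot\nabla\tilde{\mathcal{R}}_{j\rho})$, use the coercivity of $\mathcal{L}$ together with the orthogonality \eqref{eqS4.19} to control $\|\tilde{\mathcal{I}}_{j\rho}\|_{H^1}$, then upgrade to pointwise exponential decay via De~Giorgi--Nash--Moser and a comparison argument.

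There is, however, one genuine gap in the $\ep_\rho$-bookkeeping, precisely where you hedge. You bound the forcing only as $O(\ep^2_\rho)e^{-|x|/2}$ and then assert, by a vague appeal to Proposition~\ref{prop4.1}, that ``the implicit constants improve to $o(1)$.'' But with an $O(\ep^2_\rho)$ forcing, coercivity gives only $\|\tilde{\mathcal{I}}_{j\rho}\|_{H^1}=O(\ep^2_\rho)$, whereas the lemma requires $o(\ep^2_\rho)$, and no ``reabsorption'' can upgrade $O$ to $o$. The missing observation is that $w$ is \emph{radial}, so $x^\perp\cdot\nabla w\equiv 0$; hence
\[
x^\perp\cdot\nabla\tilde{\mathcal{R}}_{j\rho}=x^\perp\cdot\nabla\Big(\tilde{\mathcal{R}}_{j\rho}-\f{w}{\sqrt{a^*}}\Big)\to 0
\]
uniformly with exponential decay (the needed $C^1_{\rm loc}$ convergence follows from elliptic regularity applied to \eqref{eqS4.22} combined with \eqref{Y6}). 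This is what makes the forcing genuinely $o(\ep^2_\rho)e^{-|x|/4}$, as in the paper's \eqref{eqS4.5}, and is the mechanism behind the improvement you were reaching for. Once this is in place your energy estimate yields $\|\tilde{\mathcal{I}}_{j\rho}\|_{H^1}=o(\ep^2_\rho)$, and the pointwise bound follows. For the gradient estimate the paper also takes a shorter route than you propose: rather than differentiating the equation (which, as you note, is delicate for $1<p<2$ since $|\tilde u_{j\rho}|^{p-1}$ is only H\"older), it applies the standard interior gradient estimate \cite[(3.15)]{GT} directly to the equation for $\tilde{\mathcal{I}}_{j\rho}$, using the already-established decay of $\tilde{\mathcal{I}}_{j\rho}$ and the $o(\ep^2_\rho)$ bound on the right-hand side.
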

     \begin{proof}
     Note from \eqref{eqS4.18} and \eqref{eqS4.22} that the imaginary part $\tilde{\mathcal{I}}_{j\rho}$ of $\tilde{u}_{j\rho}$ satisfies
     \begin{equation}\label{eqS4.3}
     \mathcal{L}_\rho \tilde{\mathcal{I}}_{j\rho}(x)=-\ep^2_\rho\Om(x^\perp\cdot\nabla \tilde{\mathcal{R}}_{j\rho}) \,\ \text{in} \,\ \R^2, \,\ \int_{\R^2}\tilde{\mathcal{I}}_{j\rho}(x)w(x)dx=0,
     \end{equation}
where $\mathcal{L}_\rho$ is defined as
     \begin{equation}\label{eqS4.4}
     \mathcal{L}_\rho:=-\Delta+\f{\ep^4_\rho\Om^2|x|^2}{4}
     +\ep^2_\rho V_\Om(\ep_\rho (x+y_0))-\ep^2_\rho\mu_{j\rho}-(a^*)^\f{p-1}{2}|\tilde{u}_{j\rho}|^{p-1}.
     \end{equation}
Applying (\ref{A8wdecay}), (\ref{Y5}) and (\ref{Y6}), we deduce that as $\rho\to\infty$,
     \begin{equation}\label{eqS4.5}
     \big|\ep^2_\rho\Om(x^\perp\cdot\nabla \tilde{\mathcal{R}}_{j\rho}\big|\leq C(\ep_\rho)e^{-\f{1}{4}|x|} \,\ \text{uniformly in} \,\ \R^2,
     \end{equation}
where the constant $C(\ep_\rho)>0$ satisfies $C(\ep_\rho)=o(\ep^2_\rho)$ as $\rho\to\infty$.

Based on (\ref{eqS4.5}), we next prove \eqref{eqS4.23} and \eqref{eqS4.24}. Multiplying (\ref{eqS4.3}) by $\tilde{\mathcal{I}}_{j\rho}$ and integrating over $\R^2$, using the H\"{o}lder inequality, we obtain from (\ref{eqS4.5}) that
     \begin{equation}\label{eqS4.8}
     \int_{\R^2}(\mathcal{L}_\rho \tilde{\mathcal{I}}_{j\rho})\tilde{\mathcal{I}}_{j\rho} dx=-\ep^2_\rho\Om\int_{\R^2}(x^\perp\cdot\nabla \tilde{\mathcal{R}}_{j\rho})\tilde{\mathcal{I}}_{j\rho} dx=o(\ep^2_\rho)\|\tilde{\mathcal{I}}_{j\rho}\|_{L^2(\R^2)} \,\ \text{as} \,\ \rho\to\infty.
     \end{equation}
From (\ref{eqS3.44}), \eqref{eqS4.19} and \eqref{Y5}, we derive that as $\rho\to\infty$,
     \begin{align}\label{eqS4.9}
     \int_{\R^2}(\mathcal{L}_\rho \tilde{\mathcal{I}}_{j\rho})\tilde{\mathcal{I}}_{j\rho} dx&\geq
     \int_{\R^2}\Big[(\mathcal{L}\tilde{\mathcal{I}}_{j\rho})\tilde{\mathcal{I}}_{j\rho}
     -(1+\ep^2_\rho\mu_{j\rho})\tilde{\mathcal{I}}^2_{j\rho}
     -\big((a^*)^{\f{p-1}{2}}|\tilde{u}_{j\rho}|^{p-1}-w^{p-1}\big)\tilde{\mathcal{I}}^2_{j\rho}\Big]dx\nonumber\\
     &=\int_{\R^2}(\mathcal{L}\tilde{\mathcal{I}}_{j\rho})\tilde{\mathcal{I}}_{j\rho} dx+o(1)\int_{\R^2}\tilde{\mathcal{I}}^2_{j\rho} dx\geq\f{M}{2}\|\tilde{\mathcal{I}}_{j\rho}\|^2_{H^1(\R^2)},
     \end{align}
where $\mathcal{L}$ is defined in (\ref{eqS3.43}) and $M>0$, which is independent of $0<\rho<\infty$, is given by (\ref{eqS3.44}). It then follows from (\ref{eqS4.8}) and (\ref{eqS4.9}) that
     \begin{equation}\label{eqS4.10}
     \|\tilde{\mathcal{I}}_{j\rho}\|_{H^1(\R^2)}=o(\ep^2_\rho) \,\ \text{as} \,\ \rho\to\infty.
     \end{equation}
On the other hand, we deduce from (\ref{eqS4.3}) that $|\tilde{\mathcal{I}}_{j\rho}|^2$ satisfies the following equation
     \begin{align*}
     &\Big[-\f{1}{2}\Delta+\Big(\f{\ep^4_\rho\Om^2|x|^2}{4}
     +\ep^2_\rho V_\Om\big(\ep_\rho (x+y_0)\big)-\ep^2_\rho\mu_{j\rho}-(a^*)^\f{p-1}{2}|\tilde{u}_{j\rho}|^{p-1}\Big)\Big]
     |\tilde{\mathcal{I}}_{j\rho}|^2
     +|\nabla \tilde{\mathcal{I}}_{j\rho}|^2\\
     =&-\ep^2_\rho\Om(x^\perp\cdot\nabla \tilde{\mathcal{R}}_{j\rho})\tilde{\mathcal{I}}_{j\rho}  \,\ \text{in} \,\ \R^2,
     \end{align*}
which yields that
     \begin{equation}\label{eqS4.11}
     -\f{1}{2}\Delta|\tilde{\mathcal{I}}_{j\rho}|^2-\ep^2_\rho\mu_{j\rho}
     |\tilde{\mathcal{I}}_{j\rho}|^2-(a^*)^\f{p-1}{2}|\tilde{u}_{j\rho}|^{p-1}
     |\tilde{\mathcal{I}}_{j\rho}|^2
     \leq-\ep^2_\rho\Om(x^\perp\cdot\nabla \tilde{\mathcal{R}}_{j\rho})\tilde{\mathcal{I}}_{j\rho}  \,\ \text{in} \,\ \R^2.
     \end{equation}
Since $\mu_{j\rho}\epsilon^2_\rho\to-1$ as $\rho\to\infty$, by De Giorgi-Nash-Moser theory \cite [Theorem 4.1]{HL}, we deduce from (\ref{eqS4.11}) that for any $\xi\in\R^2$,
     \begin{equation}\label{eqS4.12}
     \sup_{x\in B_1(\xi)}|\tilde{\mathcal{I}}_{j\rho}(x)|^2\leq C\Big(\|\tilde{\mathcal{I}}^2_{j\rho}\|_{L^2(B_2(\xi))}+\|\ep^2_\rho\Om(x^\perp\cdot\nabla \tilde{\mathcal{R}}_{j\rho})\tilde{\mathcal{I}}_{j\rho}\|_{L^{\f{2}{p-1}}(B_2(\xi))}\Big).
     \end{equation}
Employing Proposition \ref{prop4.1}, we then derive from (\ref{eqS4.5}), (\ref{eqS4.10}) and (\ref{eqS4.12}) that
     \begin{equation}\label{eqS4.13}
     \|\tilde{\mathcal{I}}_{j\rho}\|_{L^\infty(\R^2)}=o(\ep^2_\rho)\,\ \text{as} \,\ \rho\to\infty,
     \end{equation}
and hence
     \begin{equation}\label{eqS4.14}
     \big|(a^*)^\f{p-1}{2}|\tilde{u}_{j\rho}|^{p-1}
     \tilde{\mathcal{I}}_{j\rho}-\ep^2_\rho\Om(x^\perp\cdot\nabla \tilde{\mathcal{R}}_{j\rho})\big|\leq C_0(\ep_\rho)e^{-\f{1}{4}|x|} \,\ \text{uniformly in} \,\ \R^2,
     \end{equation}
where the constant $C_0(\ep_\rho)>0$ satisfies $C_0(\ep_\rho)=o(\ep^2_\rho)$ as $\rho\to\infty$. By the comparison principle, we thus deduce from (\ref{eqS4.11}), (\ref{eqS4.13}) and (\ref{eqS4.14}) that
     \begin{equation}\label{eqS4.15}
     |\tilde{\mathcal{I}}_{j\rho}(x)|\leq C_{j1}(\ep_\rho)e^{-\f{1}{4}|x|} \,\ \text{uniformly in} \,\ \R^2 \,\ \text{as} \,\ \rho\to\infty,
     \end{equation}
where the constant $C_{j1}(\ep_\rho)>0$ satisfies $C_{j1}(\ep_\rho)=o(\ep^2_\rho)$ as $\rho\to\infty$. Furthermore, by the exponential decay \eqref{eqS4.15}, applying gradient estimates (see (3.15) in \cite {GT}) to (\ref{eqS4.3}) then yields that the gradient estimate of \eqref{eqS4.23} and \eqref{eqS4.24} hold true. The proof of Lemma \ref{lem 4.3} is thus completed.
     \end{proof}
Combining Lemma \ref{lem 4.5} and Lemma \ref{lem 4.3}, we now establish the refined estimates of $\tilde{\eta}_\rho$ as $\rho\to\infty$.
     \begin{lem}\label{lem 4.6}
Suppose $\{\tilde{\eta}_\rho\}$ is the sequence obtained in Lemma \ref{lem 4.5}. Then the imaginary $\tilde{\eta}_{2\rho}$ of $\tilde{\eta}_\rho$ satisfies that as $\rho\to\infty$,
     \begin{equation}\label{eqS4.61}
     \tilde{\eta}_{2\rho}(x)=\f{\ep^2_\rho\Om}{2}(-b_1x_2+b_2x_1)w(x)+E_\rho(x) \,\ \text{uniformly in} \,\ \R^2,
     \end{equation}
where $(x_1,x_2)=x\in\R^2$, $b_1$ and $b_2$ are as in Lemma \ref{lem 4.5}, and $E_\rho(x)$ satisfies that as $\rho\to\infty$,
     \begin{equation}\label{eqS4.62}
     |E_\rho(x)|\leq C_E(\ep_\rho)e^{-\f{1}{8}|x|}, |\nabla E_\rho(x)|\leq C_E(\ep_\rho)e^{-\f{1}{16}|x|} \,\ \text{uniformly in} \,\ \R^2
     \end{equation}
for some constants $C_E(\ep_\rho)>0$ satisfying $C_E(\ep_\rho)=o(\ep^2_\rho)$ as $\rho\to\infty$.
     \end{lem}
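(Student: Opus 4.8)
The plan is to take the imaginary part of \eqref{eqS4.30}, read it as an inhomogeneous linear equation for $\tilde{\eta}_{2\rho}$ whose leading right-hand side is $-\ep^2_\rho\Om(x^\perp\cdot\nabla\tilde{\eta}_{1\rho})$, exhibit an explicit approximate solution, and control the remainder via the coercivity of the linearized operator together with the comparison principle. First I would note that the coefficients $\tilde{C}_\rho$ and $\tilde{D}_\rho$ in \eqref{eqS4.31}--\eqref{eqS4.32} are real-valued, and that by Lemma \ref{lem 4.3} one has $\|\tilde{\mathcal{I}}_{j\rho}\|_{L^\infty(\R^2)}=o(\ep^2_\rho)$ with a fixed exponential decay. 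Separating real and imaginary parts of \eqref{eqS4.30}, and recalling $\int_{\R^2}w\,\tilde{\eta}_{2\rho}\,dx=0$ (from \eqref{eqS4.19} and \eqref{eqS4.28}), one gets
\begin{equation*}
\mathcal{L}_\rho\,\tilde{\eta}_{2\rho}=-\ep^2_\rho\Om\,(x^\perp\cdot\nabla\tilde{\eta}_{1\rho})+G^{(1)}_\rho,\qquad |G^{(1)}_\rho(x)|\leq C(\ep_\rho)e^{-\f{1}{4}|x|},\quad C(\ep_\rho)=o(\ep^2_\rho),
\end{equation*}
where $\mathcal{L}_\rho$ is the operator in \eqref{eqS4.4} (now carrying the multiplier $\mu_{1\rho}$ and the weight $|\tilde{u}_{2\rho}|^{p-1}$) and $G^{(1)}_\rho$ gathers the imaginary parts of $\tilde{D}_\rho\tilde{u}_{1\rho}$ and of the nonlocal term $\tilde{u}_{2\rho}\int_{\R^2}\tilde{C}_\rho\big(|\tilde{u}_{2\rho}|^{\f{p+1}{2}}+|\tilde{u}_{1\rho}|^{\f{p+1}{2}}\big)dx$; since $\tilde{D}_\rho$ and that integral are uniformly bounded (Proposition \ref{prop4.1}) while $\tilde{\mathcal{I}}_{j\rho}=o(\ep^2_\rho)$ (Lemma \ref{lem 4.3}), each contribution is $o(\ep^2_\rho)$ with exponential weight.

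Next I would evaluate the leading term. By Lemma \ref{lem 4.5}, $\tilde{\eta}_{1\rho}=b_0\big(w+\f{p-1}{2}x\cdot\nabla w\big)+\sum_{j=1}^2 b_j\f{\partial w}{\partial x_j}+M_\rho$. Since $x^\perp\cdot\nabla$ is the generator of rotations, it annihilates the radial functions $w$ and $x\cdot\nabla w$, while $x^\perp\cdot\nabla\big(\f{\partial w}{\partial x_1}\big)=-\f{\partial w}{\partial x_2}$ and $x^\perp\cdot\nabla\big(\f{\partial w}{\partial x_2}\big)=\f{\partial w}{\partial x_1}$; the $M_\rho$ term is controlled by $o(1)\,|x|e^{-\f{1}{4}|x|}$ via \eqref{eqS4.51}. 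Hence
\begin{equation*}
-\ep^2_\rho\Om\,(x^\perp\cdot\nabla\tilde{\eta}_{1\rho})=\ep^2_\rho\Om\Big(b_1\f{\partial w}{\partial x_2}-b_2\f{\partial w}{\partial x_1}\Big)+R_\rho,\qquad |R_\rho(x)|\leq C(\ep_\rho)e^{-\f{1}{8}|x|},\quad C(\ep_\rho)=o(\ep^2_\rho).
\end{equation*}
A direct computation with $\mathcal{L}=-\Delta-w^{p-1}+1$ and the equation $-\Delta w+w-w^p=0$ gives $\mathcal{L}(x_jw)=-2\f{\partial w}{\partial x_j}$, so that $\psi_\rho:=\f{\ep^2_\rho\Om}{2}(-b_1x_2+b_2x_1)w$ satisfies $\mathcal{L}\psi_\rho=\ep^2_\rho\Om\big(b_1\f{\partial w}{\partial x_2}-b_2\f{\partial w}{\partial x_1}\big)$, i.e.\ it is exactly a particular solution for the leading right-hand side.

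Then I would set $E_\rho:=\tilde{\eta}_{2\rho}-\psi_\rho$, which is the quantity in \eqref{eqS4.61}, and estimate it. Writing $\mathcal{L}_\rho E_\rho=(\mathcal{L}_\rho\tilde{\eta}_{2\rho}-\mathcal{L}\psi_\rho)-(\mathcal{L}_\rho-\mathcal{L})\psi_\rho$, the first bracket is $o(\ep^2_\rho)e^{-c|x|}$ by the two displays above, while $\mathcal{L}_\rho-\mathcal{L}$ consists only of the vanishing terms $\ep^2_\rho V_\Om(\ep_\rho(x+y_0))$, $\f{\ep^4_\rho\Om^2|x|^2}{4}$, $1-\ep^2_\rho\mu_{1\rho}$ and $w^{p-1}-(a^*)^{\f{p-1}{2}}|\tilde{u}_{2\rho}|^{p-1}$ acting on $\psi_\rho=O(\ep^2_\rho)|x|w$, which is again $o(\ep^2_\rho)e^{-c|x|}$ (here I use $(V_1)$ and the exponential decay \eqref{A8wdecay} of $w$ to absorb the potential term for large $|x|$); thus $\mathcal{L}_\rho E_\rho=o(\ep^2_\rho)e^{-c|x|}$. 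Moreover $\int_{\R^2}wE_\rho\,dx=0$, since $\int_{\R^2}w\tilde{\eta}_{2\rho}\,dx=0$ and $\int_{\R^2}(-b_1x_2+b_2x_1)w^2\,dx=0$ by oddness. Invoking the spectral coercivity $(\mathcal{L}_\rho\phi,\phi)\geq\f{M}{2}\|\phi\|_{H^1(\R^2)}^2$ on $\{w\}^\perp$ (as obtained in \eqref{eqS4.9} from \cite[Corollary 11.9 and Theorem 11.8]{LL}) and pairing with $E_\rho$ yields $\|E_\rho\|_{H^1(\R^2)}=o(\ep^2_\rho)$. Finally, exactly as in Lemma \ref{lem 4.3}, $|E_\rho|^2$ is a subsolution of an elliptic inequality whose zeroth-order coefficient $-\ep^2_\rho\mu_{1\rho}-(a^*)^{\f{p-1}{2}}|\tilde{u}_{2\rho}|^{p-1}$ stays bounded below by a positive constant outside a large ball, so De Giorgi--Nash--Moser theory \cite[Theorem 4.1]{HL} together with the comparison principle upgrades the $H^1$ bound to $|E_\rho(x)|\leq C_E(\ep_\rho)e^{-\f{1}{8}|x|}$ with $C_E(\ep_\rho)=o(\ep^2_\rho)$, and the gradient estimate (see (3.15) in \cite{GT}) gives the bound on $\nabla E_\rho$ with the weaker rate $e^{-\f{1}{16}|x|}$; this is \eqref{eqS4.61}--\eqref{eqS4.62}.

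I expect the main obstacle to be the bookkeeping of decay rates and orders of smallness: one must ensure that every error term --- the $\tilde{\mathcal{I}}_{j\rho}$-contributions, the $M_\rho$-contribution, and the commutator $(\mathcal{L}_\rho-\mathcal{L})\psi_\rho$ --- is simultaneously $o(\ep^2_\rho)$ and carries a fixed exponential weight, so that the comparison-principle step closes with precisely the rates stated in \eqref{eqS4.62}; in particular, controlling $\ep^2_\rho V_\Om(\ep_\rho(x+y_0))\psi_\rho$ on the region $|x|\gtrsim\ep_\rho^{-1}$ requires balancing the admissible exponential growth of $V_\Om$ from $(V_1)$ against the exponential decay of $w$. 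The non-degeneracy hypothesis \eqref{nondege} plays no role at this stage; it enters only afterwards, in the Pohozaev-type identities that force $b_0=b_1=b_2=0$ and close the contradiction.
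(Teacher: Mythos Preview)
Your proposal is correct and follows essentially the same route as the paper: take the imaginary part of \eqref{eqS4.30}, use Lemma~\ref{lem 4.5} to identify the leading forcing as $\ep^2_\rho\Om\big(b_1\partial_2 w-b_2\partial_1 w\big)$, invoke the identity $\mathcal{L}\big(-\tfrac12 x_jw\big)=\partial_j w$ to produce the explicit particular solution $\psi_\rho=\tfrac{\ep^2_\rho\Om}{2}(-b_1x_2+b_2x_1)w$, and then control $E_\rho=\tilde\eta_{2\rho}-\psi_\rho$ via the orthogonality $\int wE_\rho=0$, coercivity, and the comparison-principle/gradient-estimate package of Lemma~\ref{lem 4.3}. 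The only cosmetic difference is that the paper writes the equation with the operator $\mathcal{L}_{2\rho}$ of \eqref{eqS4.27} (carrying $\mu_{2\rho}$), whereas you keep the operator arising directly from \eqref{eqS4.30} (carrying $\mu_{1\rho}$); since $\ep^2_\rho(\mu_{1\rho}-\mu_{2\rho})=o(1)$ this discrepancy is harmlessly absorbed into the $o(\ep^2_\rho)e^{-c|x|}$ remainder either way.
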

     \begin{proof}
We first obtain from (\ref{eqS4.19}) and (\ref{eqS4.30}) that
      \begin{equation}\label{eqS4.63}
      \mathcal{L}_{2\rho}\tilde{\eta}_{2\rho}
      =-\ep^2_\rho\Om(x^\perp\cdot\nabla\tilde{\eta}_{1\rho})+G_\rho(x)
       \,\ \text{in} \,\ \R^2, \,\ \int_{\R^2}\tilde{\eta}_{2\rho}wdx=0,
     \end{equation}
where $\mathcal{L}_{j\rho}$ is defined for $j=1,2$,
     \begin{equation}\label{eqS4.27}
     \mathcal{L}_{j\rho}:=-\Delta+\f{\ep^4_\rho\Om^2|x|^2}{4}
     +\ep^2_\rho V_\Om(\ep_\rho (x+y_0))-\ep^2_\rho\mu_{j\rho}-(a^*)^\f{p-1}{2}|\tilde{u}_{j\rho}|^{p-1},
     \end{equation}
and $G_\rho(x)$ is defined as
     \begin{equation*}
     G_\rho(x):=-\f{p-1}{p+1}\tilde{\mathcal{I}}_{2\rho}\int_{\R^2}\tilde{C}_\rho(x)\big(|\tilde{u}_{2\rho}|^{\f{p+1}{2}}+|\tilde{u}_{1\rho}|^{\f{p+1}{2}}\big)dx
      +\tilde{D}_\rho(x)\tilde{\mathcal{I}}_{1\rho},
     \end{equation*}
where $\tilde{C}_\rho$ and $\tilde{D}_\rho$ are defined in (\ref{eqS4.31}) and (\ref{eqS4.32}). It then follows from Lemma \ref{lem 4.5} and (\ref{eqS4.63}) that
      \begin{equation}\label{eqS4.64}
      \mathcal{L}_{2\rho}\tilde{\eta}_{2\rho}
      =-\ep^2_\rho\Om\Big[-b_1\f{\partial w}{\partial x_2}+b_2\f{\partial w}{\partial x_1}+Re(x^\perp\cdot\nabla M_\rho)\Big]+G_\rho(x)
       \,\ \text{in} \,\ \R^2, \,\ \int_{\R^2}\tilde{\eta}_{2\rho}wdx=0,
     \end{equation}
where $Re(\cdot)$ denotes the real part.
On the other hand, one can check that $-\f{1}{2}x_jw$ is the unique solution of the following equation
      \begin{equation}\label{eqS4.65}
      \mathcal{L}u=\f{\partial w}{\partial x_j}, \,\ \int_{\R^2}uwdx=0, \,\ j=1,2,
     \end{equation}
where $\mathcal{L}$ is defined in (\ref{eqS3.43}). Denote
      \begin{equation*}
      E_\rho(x):=\tilde{\eta}_{2\rho}(x)-\f{\ep^2_\rho\Om}{2}(-b_1x_2+b_2x_1)w(x).
     \end{equation*}
By (\ref{eqS4.65}), we then derive from (\ref{eqS4.64}) that $E_\rho(x)$ satisfies
     \begin{equation}\label{eqS4.66}
     \mathcal{L}_{2\rho}E_\rho(x)=\f{\ep^2_\rho\Om}{2}(\mathcal{L}-\mathcal{L}_{2\rho})\big[(-b_1x_2+b_2x_1)w(x)\big]
     -\ep^2_\rho\Om Re(x^\perp\cdot\nabla M_\rho)+G_\rho(x) \,\ \text{in} \,\ \R^2,
     \end{equation}
and
      \begin{equation}\label{eqS4.67}
      \int_{\R^2}E_\rho(x)wdx=0.
      \end{equation}
\indent Now, we shall estimate the right hand side of (\ref{eqS4.66}). From the definition of $\mathcal{L}_{2\rho}$ in (\ref{eqS4.27}), we obtain that
      \begin{equation*}\label{eqS4.68}
     \f{\ep^2_\rho\Om}{2}\Big|(\mathcal{L}-\mathcal{L}_{2\rho})\big[(-b_1x_2+b_2x_1)w(x)\big]\Big|
     \leq C(\ep_\rho)e^{-\f{1}{4}|x|} \,\ \text{uniformly in} \,\ \R^2 \,\ \text{as} \,\ \rho\to\infty,
     \end{equation*}
where $C(\ep_\rho)>0$ satisfies
     \begin{equation}\label{eqS4.69}
     C(\ep_\rho)=o(\ep^2_\rho) \,\ \text{as} \,\ \rho\to\infty.
     \end{equation}
By Lemma \ref{lem 4.5}, we obtain that
     \begin{equation}\label{eqS4.70}
     \Big| Re(\ep^2_\rho\Om x^\perp\cdot\nabla M_\rho)\Big|\leq C(\ep_\rho)e^{-\f{1}{8}|x|} \,\ \text{uniformly in} \,\ \R^2 \,\ \text{as} \,\ \rho\to\infty,
     \end{equation}
where $C(\ep_\rho)>0$ also satisfies (\ref{eqS4.69}). Furthermore, we derive from Lemma \ref{lem 4.3} and (\ref{eqS4.36}) that
     \begin{equation}\label{eqS4.71}
     |G_\rho(x)|\leq C(\ep_\rho)e^{-\f{1}{4}|x|} \,\ \text{uniformly in} \,\ \R^2 \,\ \text{as} \,\ \rho\to\infty,
     \end{equation}
where $C(\ep_\rho)>0$ also satisfies (\ref{eqS4.69}). Applying the above estimates, we then derive from (\ref{eqS4.66}) that
     \begin{align}\label{eqS4.72}
     \big|\mathcal{L}_{2\rho}E_\rho(x)\big|
     &=\Big|\f{\ep^2_\rho\Om}{2}(\mathcal{L}-\mathcal{L}_{2\rho})\big[(-b_1x_2+b_2x_1)w(x)\big]
     -\ep^2_\rho\Om Re(x^\perp\cdot\nabla M_\rho)+G_\rho(x)\Big|\nonumber\\
     &\leq C(\ep_\rho)e^{-\f{1}{8}|x|} \,\ \text{uniformly in} \,\ \R^2 \,\ \text{as} \,\ \rho\to\infty.
     \end{align}
Similar to the argument of proving Lemma \ref{lem 4.3}, one can conclude from (\ref{eqS4.66}), (\ref{eqS4.67}) and (\ref{eqS4.72}) that (\ref{eqS4.61}) and (\ref{eqS4.62}) hold true. The proof of Lemma \ref{lem 4.6} is thus completed.
     \end{proof}
In the following, we shall follow the refined estimates of $\tilde{\eta}_\rho$ to complete the proof of Theorem \ref{thm.localuniqueness} on the local uniqueness of minimizers for $I(\rho)$ as $\rho\to\infty$.
\vskip 0.2truein
\noindent{\bf{Proof of Theorem 1.3:}}  Argue by contradiction, we assume that, up to a constant phase, there exist two different minimizers $u_{1\rho}$ and $u_{2\rho}$ of $I(\rho)$ as $\rho\to\infty$, that is, $u_{1\rho}\not\equiv u_{2\rho}e^{i\theta}$ for any constant phase $\theta=\theta(\rho)\in[0, 2\pi)$. Recall from \eqref{eqS4.22} and \eqref{eqS4.17} that
$\tilde{u}_{j\rho}(x):=\tilde{\mathcal{R}}_{j\rho}(x)+i\tilde{\mathcal{I}}_{j\rho}(x)$ defined in \eqref{eqS4.18} satisfies the following equation
    \begin{equation}\label{S1}
    \begin{split}
    &-\Delta \tilde{u}_{j\rho}+i\ep^2_\rho\Om(x^\perp\cdot\nabla \tilde{u}_{j\rho})+\Big[\f{\ep^4_\rho\Om^2|x|^2}{4}
     +\ep^2_\rho V_\Om(\ep_\rho (x+y_0))\Big]\tilde{u}_{j\rho}\\
     =&\ep^2_\rho\mu_{j\rho}\tilde{u}_{j\rho}+(a^*)^\f{p-1}{2}|\tilde{u}_{j\rho}|^{p-1}\tilde{u}_{j\rho} \,\ \text{in} \,\ \R^2, \,\ j=1,2,
     \end{split}
     \end{equation}
where $\mu_{j\rho}\in\R$ satisfies
    \begin{equation}\label{S2}
     \mu_{j\rho}=I(\rho)-\f{(p-1)(a^*)^{\f{p-1}{2}}}{(p+1)\ep^2_\rho}\int_{\R^2}|\tilde{u}_{j\rho}|^{p+1}dx.
     \end{equation}
Then, the real part $\tilde{\mathcal{R}}_{j\rho}$ of $\tilde u_{j\rho}$ satisfies
    \begin{equation}\label{eqS4.76}
    \begin{split}
    &-\Delta\tilde{\mathcal{R}}_{j\rho}
    -\ep^2_\rho\Om (x^\perp\cdot\nabla \tilde{\mathcal{I}}_{j\rho})+\Big[\f{\ep^4_\rho\Om^2|x|^2}{4}
     +\ep^2_\rho V_\Om(\ep_\rho(x+y_0))\Big]\tilde{\mathcal{R}}_{j\rho}\\
     =&\ep^2_\rho\mu_{j\rho}\tilde{\mathcal{R}}_{j\rho}
     +(a^*)^\f{p-1}{2}|\tilde{u}_{j\rho}|^{p-1}\tilde{\mathcal{R}}_{j\rho} \,\ \text{in} \,\ \R^2, \,\ j=1,2.
     \end{split}
     \end{equation}

To obtain a contradiction, we next prove the constants $b_i=0$ ($i=0, 1, 2$) defined in (\ref{eqS4.50}) by constructing Pohozaev identities of $\tilde{\mathcal{R}}_{j\rho}$, where $j=1, 2$. This process is divided into the following three steps:
\vskip0.2cm
\noindent $\mathbf{Step\,\ 1}$. We claim that the following Pohozaev identity holds
     \begin{equation}\label{eqS4.73}
     b_0\f{p-1}{2}\int_{\R^2}\f{\partial h(x+y_0)}{\partial x_l}(x\cdot\nabla w^2)dx
     +\sum^2_{j=1}b_j\int_{\R^2}\f{\partial h(x+y_0)}{\partial x_l}\f{\partial w^2}{\partial x_j}dx=0, \,\ l=1,2,
     \end{equation}
where $b_0, b_1$, and $b_2$ are defined in (\ref{eqS4.50}).

In order to prove the claim \eqref{eqS4.73}, we first multiply (\ref{eqS4.76}) by $\f{\partial\tilde{\mathcal{R}}_{j\rho}}{\partial x_l}$ and integrating over $\R^2$, where $j=1,2$ and $l=1,2$, one then obtain
    \begin{equation}\label{eqS4.77}
    \begin{split}
    &-\int_{\R^2}\Delta\tilde{\mathcal{R}}_{j\rho}
    \f{\partial\tilde{\mathcal{R}}_{j\rho}}{\partial x_l}
    -\int_{\R^2}\ep^2_\rho\Om(x^\perp\cdot\nabla \tilde{\mathcal{I}}_{j\rho})
    \f{\partial\tilde{\mathcal{R}}_{j\rho}}{\partial x_l}\\
    +&\int_{\R^2}\f{1}{2}\Big[\f{\ep^4_\rho\Om^2|x|^2}{4}
     +\ep^2_\rho V_\Om(\ep_\rho(x+y_0))\Big]
     \f{\partial|\tilde{\mathcal{R}}_{j\rho}|^2}{\partial x_l}\\
     =&\int_{\R^2}\f{1}{2}\ep^2_\rho\mu_{j\rho}
     \f{\partial|\tilde{\mathcal{R}}_{j\rho}|^2}{\partial x_l}
     +\int_{\R^2}\f{(a^*)^\f{p-1}{2}}{2}|\tilde{u}_{j\rho}|^{p-1}
     \f{\partial|\tilde{\mathcal{R}}_{j\rho}|^2}{\partial x_l}.
     \end{split}
     \end{equation}
By the exponential decay \eqref{Y6}, we calculate that for $j=1, 2$,
    \begin{equation*}
    \begin{split}
    -\int_{\R^2}\Delta\tilde{\mathcal{R}}_{j\rho}
    \f{\partial\tilde{\mathcal{R}}_{j\rho}}{\partial x_l}
    &=-\lim\limits_{\R\to\infty}\int_{B_R(0)}\Delta\tilde{\mathcal{R}}_{j\rho}
    \f{\partial\tilde{\mathcal{R}}_{j\rho}}{\partial x_l}\\
    &=-\lim\limits_{\R\to\infty}\int_{\partial B_R(0)}
    \Big(\f{\partial\tilde{\mathcal{R}}_{j\rho}}{\partial\nu}
    \f{\partial\tilde{\mathcal{R}}_{j\rho}}{\partial x_l}
    -\f{1}{2}|\nabla\tilde{\mathcal{R}}_{j\rho}|^2\nu_l\Big)dS=0,
    \end{split}
    \end{equation*}
where $\nu=(\nu_1,\nu_2)$ denotes the outward unit of $\partial B_R(0)$, and
    \begin{equation*}
    \int_{\R^2}\f{1}{2}\Big[\f{\ep^4_\rho\Om^2|x|^2}{4}
     +\ep^2_\rho V_\Om(\ep_\rho(x+y_0))\Big]\f{\partial|\tilde{{\mathcal{R}}}_{j\rho}|^2}{\partial x_l}
    =-\int_{\R^2}\f{1}{2}\Big[\f{\ep^4_\rho\Om^2x_l}{2}
     +\ep^2_\rho\f{\partial V_\Om(\ep_\rho(x+y_0))}{\partial x_l}\Big]|\tilde{\mathcal{R}}_{j\rho}|^2,
     \end{equation*}
where $(x_1,x_2)=x\in\R^2$. It then follows from (\ref{eqS4.77}) that
    \begin{equation}\label{eqS4.81}
    \begin{split}
    &\int_{\R^2}\f{1}{2}\Big[\f{\ep^4_\rho\Om^2x_l}{2}
     +\ep^2_\rho\f{\partial V_\Om(\ep_\rho(x+y_0))}{\partial x_l}\Big]|\tilde{\mathcal{R}}_{j\rho}|^2\\
     =&-\int_{\R^2}\ep^2_\rho\Om(x^\perp\cdot\nabla \tilde{\mathcal{I}}_{j\rho})
    \f{\partial\tilde{\mathcal{R}}_{j\rho}}{\partial x_l}
    -\int_{\R^2}\f{(a^*)^\f{p-1}{2}}{2}|\tilde{u}_{j\rho}|^{p-1}
     \f{\partial|\tilde{\mathcal{R}}_{j\rho}|^2}{\partial x_l},\,\ j=1, 2.
     \end{split}
     \end{equation}
Moreover, using \eqref{eqS4.18}, \eqref{eqS4.28} and \eqref{eqS4.81}, we have
    \begin{align}\label{eqS4.82}
    &\int_{\R^2}\f{\ep^2_\rho}{2}
    \f{\partial V_\Om(\ep_\rho(x+y_0))}{\partial x_l}(\tilde{\mathcal{R}}_{2\rho}
    +\tilde{\mathcal{R}}_{1\rho})\tilde{\eta}_{1\rho}\nonumber\\
    =&-\int_{\R^2}\f{\ep^4_\rho\Om^2x_l}{4}
    (\tilde{\mathcal{R}}_{2\rho}+\tilde{\mathcal{R}}_{1\rho})\tilde{\eta}_{1\rho}
     \nonumber\\
     &-\int_{\R^2}\Big\{\ep^2_\rho\Om (x^\perp\cdot\nabla \tilde{\eta}_{2\rho})
     \f{\partial\tilde{R}_{2\rho}}{\partial x_l}
     +\ep^2_\rho\Om(x^\perp\cdot\nabla\tilde{\mathcal{I}}_{1\rho})
     \f{\partial\tilde{\eta}_{1\rho}}{\partial x_l}\Big\}
     \\
     &+\int_{\R^2}\f{(a^*)^\f{p-1}{2}}{2}|\tilde{u}_{2\rho}|^{p-1}
     \f{\partial\big[\tilde{\eta}_{2\rho}(\tilde{\mathcal{I}}_{2\rho}+\tilde{\mathcal{I}}_{1\rho})\big]}{\partial x_l}\nonumber\\
     &+\int_{\R^2}\f{(p-1)(a^*)^\f{p-1}{2}}{4}
     \Big[\tilde{\eta}_{1\rho}(\tilde{\mathcal{R}}_{2\rho}+\tilde{\mathcal{R}}_{1\rho})
      +\tilde{\eta}_{2\rho}(\tilde{\mathcal{I}}_{2\rho}+\tilde{\mathcal{I}}_{1\rho})\Big]\nonumber\\
      &\quad\quad\cdot\int^{1}_{0}\Big[t|\tilde{u}_{2\rho}|^2+(1-t)|\tilde{u}_{1\rho}|^2\Big]^{\f{p-3}{2}}dt
     \f{\partial|\tilde{\mathcal{I}}_{1\rho}|^2}{\partial x_l}.
     \nonumber
     \end{align}

Next, we shall prove the claim \eqref{eqS4.73} by estimating all terms of (\ref{eqS4.82}). Applying Lemma \ref{lem 4.5} and \eqref{eqS4.65}, we deduce that as $\rho\to\infty$,
    \begin{equation}\label{eqS4.84}
    \begin{split}
    &-\int_{\R^2}\f{\ep^4_\rho\Om^2x_l}{4}
    (\tilde{\mathcal{R}}_{2\rho}+\tilde{\mathcal{R}}_{1\rho})\tilde{\eta}_{1\rho}\\
    =&-\f{\ep^4_\rho\Om^2}{2\sqrt{a^*}}
    \int_{\R^2}x_lw\Big[b_0\big(w+\f{p-1}{2}x\cdot\nabla w\big)
     +\sum^2_{j=1}b_j\f{\partial w}{\partial x_j}\Big]+o(\ep^4_\rho)\\
    =&-\f{\ep^4_\rho\Om^2}{4\sqrt{a^*}}
    \int_{\R^2}b_lx_l\f{\partial w^2}{\partial x_l}+o(\ep^4_\rho)\\
    =&\f{\ep^4_\rho\Om^2\sqrt{a^*}b_l}{4}+o(\ep^4_\rho), \,\ l=1,2.
    \end{split}
    \end{equation}
Similarly, we obtain from Lemma \ref{lem 4.6} that as $\rho\to\infty$,
    \begin{equation}\label{eqS4.85}
    \begin{split}
    &-\int_{\R^2}\ep^2_\rho\Om \big(x^\perp\cdot\nabla \tilde{\eta}_{2\rho}\big)
    \f{\partial\tilde{\mathcal{R}}_{2\rho}}{\partial x_l}\\
    =&-\ep^2_\rho\Om\int_{\R^2} x^\perp\cdot\nabla \Big[\f{\ep^2_\rho\Om}{2}(-b_1x_2+b_2x_1)w\Big]
    \f{\partial\big(\f{w}{\sqrt{a^*}}\big)}{\partial x_l}+o(\ep^4_\rho)\\
    =&-\f{\ep^4_\rho\Om^2}{2\sqrt{a^*}}
    \int_{\R^2}(-b_2x_2w-b_1x_1w)\f{\partial w}{\partial x_l}+o(\ep^4_\rho)\\
    =&-\f{\ep^4_\rho\Om^2\sqrt{a^*}b_l}{4}+o(\ep^4_\rho), \,\ l=1,2.
    \end{split}
    \end{equation}
From Proposition \ref{prop4.1}, Lemmas \ref{lem 4.3} and \ref{lem 4.6}, we also derive that as $\rho\to\infty$,
    \begin{equation}\label{eqS4.86}
    \int_{\R^2}\ep^2_\rho\Om (x^\perp\cdot\nabla\tilde{\mathcal{I}}_{1\rho})
    \f{\partial\tilde{\eta}_{1\rho}}{\partial x_l}=o(\ep^4_\rho),
    \end{equation}
    \begin{equation}\label{eqS4.87}
    \int_{\R^2}\f{(a^*)^\f{p-1}{2}}{2}|\tilde{u}_{2\rho}|^{p-1}
     \f{\partial\big[\tilde{\eta}_{2\rho}(\tilde{\mathcal{I}}_{2\rho}+\tilde{\mathcal{I}}_{1\rho})\big]}{\partial x_l}
    =o(\ep^4_\rho),
    \end{equation}
and
    \begin{align}\label{eqS4.88}
    &\int_{\R^2}\f{(p-1)(a^*)^\f{p-1}{2}}{4}
     \Big[\tilde{\eta}_{1\rho}(\tilde{\mathcal{R}}_{2\rho}+\tilde{\mathcal{R}}_{1\rho})
      +\tilde{\eta}_{2\rho}(\tilde{\mathcal{I}}_{2\rho}+\tilde{\mathcal{I}}_{1\rho})\Big]\nonumber\\
      &\quad\quad\cdot\int^{1}_{0}\Big[t|\tilde{u}_{2\rho}|^2+(1-t)|\tilde{u}_{1\rho}|^2\Big]^{\f{p-3}{2}}dt
     \f{\partial|\tilde{\mathcal{I}}_{1\rho}|^2}{\partial x_l}=o(\ep^4_\rho).\\
     \nonumber
    \end{align}
Employing (\ref{eqS4.84})-(\ref{eqS4.88}), $(V_1)$, $(V_2)$ and Lemma \ref{lem 4.5}, one can deduce from (\ref{eqS4.82}) that for $l=1, 2$, as $\rho\to\infty$,
    \begin{align}\label{eqS4.94}
    o(\ep^4_\rho)=&\int_{\R^2}\f{\ep^2_\rho}{2}
    \f{\partial V_\Om(\ep_\rho(x+y_0))}{\partial x_l}
    (\tilde{\mathcal{R}}_{2\rho}+\tilde{\mathcal{R}}_{1\rho})
    \tilde{\eta}_{1\rho}dx\nonumber\\
    =&\f{\ep^{2+s}_\rho}{2}\int_{\R^2}
    \f{\partial h(x+y_0)}{\partial x_l}
    (\tilde{\mathcal{R}}_{2\rho}+\tilde{\mathcal{R}}_{1\rho})
    \tilde{\eta}_{1\rho}dx+o(\ep^{2+s}_\rho)\nonumber\\
    =&\f{\ep^{2+s}_\rho}{\sqrt{a^*}}\int_{\R^2}
    \f{\partial h(x+y_0)}{\partial x_l}w
    \Big[b_0\big(w+\f{p-1}{2}x\cdot\nabla w\big)
     +\sum^2_{j=1}b_j\f{\partial w}{\partial x_j}\Big]dx+o(\ep^{2+s}_\rho)\\
    =&\f{\ep^{2+s}_\rho}{2\sqrt{a^*}}\int_{\R^2}
    \Big[b_0\f{p-1}{2}\f{\partial h(x+y_0)}{\partial x_l}(x\cdot\nabla w^2)
    +\sum^2_{j=1}b_j\f{\partial h(x+y_0)}{\partial x_l}
    \f{\partial w^2}{\partial x_j}\Big]dx+o(\ep^{2+s}_\rho),\nonumber
    \end{align}
which implies that the claim (\ref{eqS4.73}) holds true.
\vskip0.2cm
\noindent $\mathbf{Step\,\ 2}$. The coefficient $b_0=0$ in (\ref{eqS4.50}).

Multiplying (\ref{eqS4.76}) by $(x\cdot\nabla\tilde{\mathcal{R}}_{j\rho})$ and integrating over $\R^2$, where $j=1, 2$, we have
    \begin{equation}\label{eqS4.95}
    \begin{split}
    &-\int_{\R^2}\Delta\tilde{R}_{j\rho}(x\cdot\nabla\tilde{\mathcal{R}}_{j\rho})
    -\int_{\R^2}\ep^2_\rho\Om(x^\perp\cdot\nabla \tilde{\mathcal{I}}_{j\rho})
    (x\cdot\nabla\tilde{\mathcal{R}}_{j\rho})\\
    =&\int_{\R^2}\Big[\ep^2_\rho\mu_{j\rho}
    -\f{\ep^4_\rho\Om^2|x|^2}{4}-\ep^2_\rho V_\Om(\ep_\rho(x+y_0))\Big]\tilde{\mathcal{R}}_{j\rho}
    (x\cdot\nabla\tilde{\mathcal{R}}_{j\rho})\\
    &+(a^*)^\f{p-1}{2}\int_{\R^2}|\tilde{u}_{j\rho}|^{p-1}\tilde{\mathcal{R}}_{j\rho}
    (x\cdot\nabla\tilde{\mathcal{R}}_{j\rho}).
    \end{split}
    \end{equation}
By the integration by parts, we derive from \eqref{Y6} that for $j=1, 2$,
    \begin{align*}
    A_{j\rho}:=&-\int_{\R^2}\Delta\tilde{\mathcal{R}}_{j\rho}(x\cdot\nabla\tilde{\mathcal{R}}_{j\rho})\\
    =&-\lim\limits_{R\to\infty}
    \Big[\int_{\partial B_R(0)}\f{\partial\tilde{\mathcal{R}}_{j\rho}}{\partial\nu}
    (x\cdot\nabla\tilde{\mathcal{R}}_{j\rho})dS
    -\int_{B_R(0)}\nabla\tilde{\mathcal{R}}_{j\rho}
    \nabla(x\cdot\nabla\tilde{R}_{j\rho})\Big]\\
    =&-\lim\limits_{R\to\infty}
    \Big[\int_{\partial B_R(0)}\f{\partial\tilde{\mathcal{R}}_{j\rho}}{\partial\nu}
    (x\cdot\nabla\tilde{\mathcal{R}}_{j\rho})dS
    -\f{1}{2}\int_{\partial B_R(0)}(x\cdot\nu)|\nabla\tilde{\mathcal{R}}_{j\rho}|^2dS\Big]\\
    =&0,
    \end{align*}
    \begin{align*}
    B_{j\rho}:=&\int_{\R^2}\Big[\ep^2_\rho\mu_{j\rho}
    -\f{\ep^4_\rho\Om^2|x|^2}{4}-\ep^2_\rho V_\Om(\ep_\rho(x+y_0))\Big]\tilde{\mathcal{R}}_{j\rho}
    (x\cdot\nabla\tilde{\mathcal{R}}_{j\rho})\\
    =&-\int_{\R^2}\tilde{\mathcal{R}}^2_{j\rho}
    \Big[\ep^2_\rho\mu_{j\rho}
    -\f{\ep^4_\rho\Om^2|x|^2}{2}-\ep^2_\rho V_\Om(\ep_\rho(x+y_0))-\f{\ep^2_\rho}{2}x\cdot\nabla_x V_\Om(\ep_\rho(x+y_0))\Big],
    \end{align*}
and
    \begin{align*}
    C_{j\rho}:=&(a^*)^\f{p-1}{2}\int_{\R^2}|\tilde{u}_{j\rho}|^{p-1}\tilde{\mathcal{R}}_{j\rho}
    (x\cdot\nabla\tilde{\mathcal{R}}_{j\rho})\\
    =&\f{(a^*)^\f{p-1}{2}}{2}\int_{\R^2}|\tilde{u}_{j\rho}|^{p-1}
    \big[(x\cdot\nabla|\tilde{u}_{j\rho}|^2)
    -(x\cdot\nabla|\tilde{\mathcal{I}}_{j\rho}|^2)\big]\\
    =&\f{(a^*)^\f{p-1}{2}}{p+1}\int_{\R^2}
    (x\cdot\nabla|\tilde{u}_{j\rho}|^{p+1})
    -\f{(a^*)^\f{p-1}{2}}{2}\int_{\R^2}
    |\tilde{u}_{j\rho}|^{p-1}(x\cdot\nabla\tilde{\mathcal{I}}^{2}_{j\rho})\\
    =&-\f{2(a^*)^\f{p-1}{2}}{p+1}\int_{\R^2}
    |\tilde{u}_{j\rho}|^{p+1}-\f{(a^*)^\f{p-1}{2}}{2}\int_{\R^2}
    |\tilde{u}_{j\rho}|^{p-1}(x\cdot\nabla\tilde{\mathcal{I}}^{2}_{j\rho}),
    \end{align*}
Denote
    \begin{equation*}
    D_{j\rho}:=-\int_{\R^2}\ep^2_\rho\Om(x^\perp\cdot\nabla \tilde{\mathcal{I}}_{j\rho})
    (x\cdot\nabla\tilde{\mathcal{R}}_{j\rho}),\,\ j=1,2.
    \end{equation*}
Thus, we derive from (\ref{eqS4.95}) and above that
    \begin{equation}\label{eqS4.96}
    \f{D_{2\rho}-D_{1\rho}}{\|\tilde{u}_{2\rho}-\tilde{u}_{1\rho}\|_{L^\infty(\R^2)}}
    =\f{(B_{2\rho}-B_{1\rho})+(C_{2\rho}-C_{1\rho})}{\|\tilde{u}_{2\rho}-\tilde{u}_{1\rho}\|_{L^\infty(\R^2)}}.
    \end{equation}

We next estimate all terms of \eqref{eqS4.96} as follows. Applying Lemmas \ref{lem 4.4},  \ref{lem 4.3} and \ref{lem 4.6}, we obtain that as $\rho\to\infty$,
    \begin{equation}\label{eqS4.98}
    \begin{split}
    &\f{D_{2\rho}-D_{1\rho}}{\|\tilde{u}_{2\rho}-\tilde{u}_{1\rho}\|_{L^\infty(\R^2)}}\\
    =&-\ep^2_\rho\int_{\R^2}
    \Om (x^\perp\cdot\nabla \tilde{\eta}_{2\rho})
    (x\cdot\nabla\tilde{\mathcal{R}}_{2\rho})
    -\ep^2_\rho\int_{\R^2}
    \Om(x^\perp\cdot\nabla \tilde{\mathcal{I}}_{1\rho})
    (x\cdot\nabla\tilde{\eta}_{1\rho})\\
    =&-\ep^2_\rho\int_{\R^2}
    \Om (x^\perp\cdot\nabla \tilde{\eta}_{2\rho})
    \Big[x\cdot\Big(\nabla\f{w}{\sqrt{a^*}}\Big)
    +x\cdot\nabla\Big(\tilde{\mathcal{R}}_{2\rho}-\f{w}{\sqrt{a^*}}\Big)\Big]+o(\ep^4_\rho)\\
    =&\ep^2_\rho\f{\Om}{\sqrt{a^*}}\int_{\R^2}
    \Big[x^\perp\cdot\nabla(x\cdot\nabla w)\Big]
    \tilde{\eta}_{2\rho}+o(\ep^4_\rho)=o(\ep^4_\rho).
    \end{split}
    \end{equation}
Similarly, by Proposition \ref{prop4.1}, Lemmas \ref{lem 4.3} and \ref{lem 4.6}, we deduce that  as $\rho\to\infty$,
    \begin{align}\label{eqS4.99}
    &\f{C_{2\rho}-C_{1\rho}}{\|\tilde{u}_{2\rho}-\tilde{u}_{1\rho}\|_{L^\infty(\R^2)}}\nonumber\\
    =&-\f{2(a^*)^\f{p-1}{2}}{p+1}\int_{\R^2}
    \f{|\tilde{u}_{2\rho}|^{p+1}-|\tilde{u}_{1\rho}|^{p+1}}{\|\tilde{u}_{2\rho}-\tilde{u}_{1\rho}\|_{L^\infty(\R^2)}}
    -\f{(a^*)^{\f{p-1}{2}}}{2}\int_{\R^2}
    |\tilde{u}_{2\rho}|^{p-1}
    \Big[x\cdot\nabla\big[\tilde{\eta}_{2\rho}(\tilde{\mathcal{I}}_{2\rho}+\tilde{\mathcal{I}}_{1\rho})\big]\Big]\nonumber\\
    &-\f{(p-1)(a^*)^{\f{p-1}{2}}}{4}\int_{\R^2}
     \Big[\tilde{\eta}_{1\rho}(\tilde{\mathcal{R}}_{2\rho}+\tilde{\mathcal{R}}_{1\rho})
      +\tilde{\eta}_{2\rho}(\tilde{\mathcal{I}}_{2\rho}+\tilde{\mathcal{I}}_{1\rho})\Big]\nonumber\\
      &\quad\quad\cdot\int^{1}_{0}\Big[t|\tilde{u}_{2\rho}|^2+(1-t)|\tilde{u}_{1\rho}|^2\Big]^{\f{p-3}{2}}dt
      (x\cdot\nabla\tilde{\mathcal{I}}^2_{1\rho})
    \nonumber\\
    =&-\f{2(a^*)^\f{p-1}{2}}{p+1}\int_{\R^2}
    \f{|\tilde{u}_{2\rho}|^{p+1}-|\tilde{u}_{1\rho}|^{p+1}}{\|\tilde{u}_{2\rho}-\tilde{u}_{1\rho}\|_{L^\infty(\R^2)}}
    +o(\ep^4_\rho).
    \end{align}
As for the term containing $B_{j\rho}$, we obtain from the assumption $(V_2)$ that as $\rho\to\infty$,
    \begin{align*}
    &\f{B_{2\rho}-B_{1\rho}}{\|\tilde{u}_{2\rho}-\tilde{u}_{1\rho}\|_{L^\infty(\R^2)}}\\
    =&\int_{\R^2}
    \Big[\f{\ep^4_\rho\Om^2|x|^2}{2}+\ep^2_\rho V_\Om\big(\ep_\rho(x+y_0)\big)+\f{\ep^2_\rho}{2}(x+y_0)\cdot\nabla_x V_\Om\big(\ep_\rho(x+y_0)\big)\Big]\\
    &\quad\quad\cdot(\tilde{\mathcal{R}}_{2\rho}+\tilde{\mathcal{R}}_{1\rho})
    \tilde{\eta}_{1\rho}dx-J_\rho-K_\rho\\
    =&\int_{\R^2}
    \Big[\f{\ep^4_\rho\Om^2|x|^2}{2}+\f{2+s}{2}\ep^{2+s}_\rho h(x+y_0)\Big]
    (\tilde{\mathcal{R}}_{2\rho}+\tilde{\mathcal{R}}_{1\rho})\tilde{\eta}_{1\rho}dx
    -J_\rho-K_\rho+o(\ep^{2+s}_\rho)\\
    =&O(\ep^{2+s}_\rho)
    -J_\rho-K_\rho,
    \end{align*}
where the fact that $x\cdot\nabla h(x)=sh(x)$ is used in the last equality. Here the terms $J_\rho$ and $K_\rho$ are defined as
$$J_\rho:=\f{\ep^2_\rho}{2}\int_{\R^2}
\Big[y_0\cdot\nabla_x V_\Om\big(\ep_\rho(x+y_0)\Big]
(\tilde{\mathcal{R}}_{2\rho}+\tilde{\mathcal{R}}_{1\rho})\tilde{\eta}_{1\rho}dx,$$
and
$$K_\rho:=\ep^2_\rho\int_{\R^2}\f{\tilde{\mathcal{R}}^2_{2\rho}\mu_{2\rho}
-\tilde{\mathcal{R}}^2_{1\rho}\mu_{1\rho}}
{\|\tilde{u}_{2\rho}-\tilde{u}_{1\rho}\|_{L^\infty(\R^2)}}dx.$$
Using \eqref{A14concentrate.rate}, we note from the first identity of \eqref{eqS4.94} that as $\rho\to\infty$,
$$J_\rho=o(\ep^4_\rho).$$
Applying Lemmas \ref{lem 4.5}, \ref{lem 4.3} and \ref{lem 4.6}, we obtain from \eqref{S2} that as $\rho\to\infty$,
    \begin{align*}
    K_\rho:=&\ep^2_\rho\int_{\R^2}
    \f{|\tilde{u}_{2\rho}|^2\mu_{2\rho}
    -|\tilde{u}_{1\rho}|^2\mu_{1\rho}}
     {\|\tilde{u}_{2\rho}-\tilde{u}_{1\rho}\|_{L^\infty(\R^2)}}
    -\ep^2_\rho\int_{\R^2}\f{\tilde{\mathcal{I}}^2_{2\rho}\mu_{2\rho}
    -\tilde{\mathcal{I}}^2_{1\rho}\mu_{1\rho}}
    {\|\tilde{u}_{2\rho}-\tilde{u}_{1\rho}\|_{L^\infty(\R^2)}}\\
    =&\f{\ep^2_\rho(\mu_{2\rho}-\mu_{1\rho})}
     {\|\tilde{u}_{2\rho}-\tilde{u}_{1\rho}\|_{L^\infty(\R^2)}}
    -\ep^2_\rho\int_{\R^2}\tilde{\eta}_{2\rho}
    (\tilde{\mathcal{I}}_{2\rho}+\tilde{\mathcal{I}}_{1\rho})\mu_{2\rho}\\
    &-\f{\ep^2_\rho(\mu_{2\rho}-\mu_{1\rho})}
     {\|\tilde{u}_{2\rho}-\tilde{u}_{1\rho}\|_{L^\infty(\R^2)}}
    \int_{\R^2}\tilde{\mathcal{I}}^2_{1\rho}\\
    =&-\f{(p-1)(a^*)^\f{p-1}{2}}{p+1}\int_{\R^2}
    \f{|\tilde{u}_{2\rho}|^{p+1}-|\tilde{u}_{1\rho}|^{p+1}}
    {\|\tilde{u}_{2\rho}-\tilde{u}_{1\rho}\|_{L^\infty(\R^2)}}+o(\ep^4_\rho).
    \end{align*}
It then follows from above that as $\rho\to\infty$,
    \begin{equation}\label{B}
    \begin{split}
    &\f{B_{2\rho}-B_{1\rho}}{\|\tilde{u}_{2\rho}-\tilde{u}_{1\rho}\|_{L^\infty(\R^2)}}\\
    =&O(\ep^{2+s}_\rho)
    +\f{(p-1)(a^*)^\f{p-1}{2}}{p+1}\int_{\R^2}
    \f{|\tilde{u}_{2\rho}|^{p+1}-|\tilde{u}_{1\rho}|^{p+1}}
    {\|\tilde{u}_{2\rho}-\tilde{u}_{1\rho}\|_{L^\infty(\R^2)}}.
    \end{split}
    \end{equation}
From (\ref{eqS4.96})-(\ref{B}), we deduce that as $\rho\to\infty$,
    \begin{align}\label{eqS4.101}
    O(\ep^{2+s}_\rho)
    =&\f{(3-p)(a^*)^{\f{p-1}{2}}}{p+1}\int_{\R^2}
    \f{|\tilde{u}_{2\rho}|^{p+1}-|\tilde{u}_{1\rho}|^{p+1}}
    {\|\tilde{u}_{2\rho}-\tilde{u}_{1\rho}\|_{L^\infty(\R^2)}}\nonumber\\ =&\f{(3-p)(a^*)^{\f{p-1}{2}}}{4}\int_{\R^2}
    \Big\{\big(|\tilde{u}_{2\rho}|^{\f{p+1}{2}}+|\tilde{u}_{1\rho}|^{\f{p+1}{2}}\big)
    \Big[(\tilde{\mathcal{R}}_{2\rho}+\tilde{\mathcal{R}}_{1\rho})\tilde{\eta}_{1\rho}
    +(\tilde{\mathcal{I}}_{2\rho}+\tilde{\mathcal{I}}_{1\rho})\tilde{\eta}_{2\rho}\Big]\nonumber\\
    & \ \ \ \ \ \ \ \ \ \ \ \ \ \ \ \ \ \ \ \ \ \ \ \ \ \ \ \ \ \ \
    \cdot\int^1_0\Big[t|\tilde{u}_{2\rho}|^2+(1-t)|\tilde{u}_{1\rho}|^2\Big]^{\f{p-3}{4}}dt\Big\}dx\nonumber\\
    =&\f{(3-p)}{\sqrt{a^*}}\int_{\R^2}
    w^p\tilde{\eta}_1dx+o(1).
    \end{align}
Using Lemma \ref{lem 4.5}, it then follows from (\ref{eqS4.101}) that
    \begin{align*}
    0=&\int_{\R^2}w^p\tilde{\eta}_1\\
    =&\int_{\R^2}w^p\Big[b_0\big(w+\f{p-1}{2}x\cdot\nabla w\big)
     +\sum^2_{j=1}b_j\f{\partial w}{\partial x_j}\Big]\\
    =&b_0\int_{\R^2}w^{p+1}
    +\f{b_0}{2}\f{p-1}{p+1}\int_{\R^2}x\cdot\nabla w^{p+1}
    +\sum^2_{j=1}\f{b_j}{p+1}\int_{\R^2}\f{\partial w^{p+1}}{\partial x_j}\\
    =&b_0\int_{\R^2}w^{p+1}
    -b_0\f{p-1}{p+1}\int_{\R^2}w^{p+1}\\
    =&b_0\Big[1-\f{p-1}{p+1}\Big]\int_{\R^2}w^{p+1},
    \end{align*}
which implies $b_0=0$ due to $1<p<3$.
\vskip0.2cm
\noindent $\mathbf{Step\,\ 3}$. The constants $b_1=b_2=0$.

By step 2, we derive from (\ref{eqS4.73}) that
     \begin{equation*}
     \sum^2_{j=1}b_j\int_{\R^2}\f{\partial h(x+y_0)}{\partial x_l}\f{\partial w^2}{\partial x_j}dx
     =0, \,\ l=1,2,
     \end{equation*}
which implies from the non-degeneracy assumption of $H(y)$ in \eqref{nondege} that $b_1=b_2=0$, and the proof of Step 3 is thus completed.

\vskip 0.1truein
Since $\|\tilde{\eta}_\rho\|_{L^\infty(\R^2)}=1$, we can deduce from the exponential decay of Lemma \ref{lem 4.4} that  $\tilde{\eta}_\rho\to\tilde{\eta}_0=\tilde{\eta}_1+i\tilde{\eta}_2\not\equiv0$ uniformly in $C^1(\R^2)$ as $\rho\to\infty$. However, Steps 2 and  3 imply that $\tilde{\eta}_0\equiv0$, this is a contraction. Therefore, we complete the proof of Theorem \ref{thm.localuniqueness}.

\qed

\appendix
\section{Appendix}
\renewcommand{\theequation}{A.\arabic{equation}}
\setcounter{equation}{0}
In the appendix, we shall prove the equivalence between ground states of equation \eqref{bc1-1} and minimizers of \eqref{A1cvp}. We first introduce the definition of ground states of \eqref{bc1-1}.
Given any $\rho\in(0,\infty)$ and $0<\Om<\Om^*$, the energy functional of \eqref{bc1-1} is defined by
\begin{equation}\label{bc-3}
\begin{split}
F_{\mu,\rho}(u):&=\int_{\R^2}\big[|\nabla u|^{2}+(V(x)-\mu)|u|^{2}\big]dx-\f{2\rho^{p-1}}{p+1}\int_{\R^2}|u|^{p+1}dx\\
&\quad\quad-\Om\int_{\R^2}x^\perp\cdot(iu,\nabla u)dx,
\end{split}
\end{equation}
where $\mu\in\R$ is a parameter and the energy functional $E_\rho(u)$ is given by \eqref{A2ef}.
Define
$$S_{\mu, \rho}:=\{u\in \mathcal{H}\setminus \{0\}:\langle F^{'}_{\mu, \rho}(u), \varphi \rangle=0\,\ \text{for all}\,\ \varphi\in \mathcal{H}\}, $$
where
\begin{equation*}
\begin{split}
\langle F^{'}_{\mu, \rho}(u), \varphi \rangle
&=2Re\Big\{\int_{\R^2}\big[\nabla u\nabla \bar{\varphi}+(V(x)-\mu)u\bar{\varphi}\big]dx-\rho^{p-1}\int_{\R^2}|u|^{p-1}u\bar{\varphi}dx\\
&\quad\quad\quad\quad+\int_{\R^2}i\Om(x^\perp\cdot\nabla u)\bar{\varphi}dx\Big\},
\end{split}
\end{equation*}
and
\begin{equation}\label{bc-4}
G_{\mu, \rho}:=\{u\in S_{\mu, \rho}:F_{\mu, \rho}(u)\leq F_{\mu, \rho}(v), \ \text{for all} \ v\in S_{\mu, \rho}\}.
\end{equation}
If $u\in G_{\mu, \rho}$, we say that $u$ is a {\em ground state} of \eqref{bc1-1}. Now we give the following theorem on the equivalence between ground states of equation \eqref{bc1-1} and minimizers of \eqref{A1cvp}.
\begin{thm}\label{bc}
Suppose $\rho\in(0,\infty)$ and $0<\Om<\Om^*$ are given, then any minimizer of \eqref{A1cvp} is a ground state of \eqref{bc1-1} for some $\mu\in\R$; conversely, any ground state of \eqref{bc1-1} for some $\mu\in\R$ is a minimizer of \eqref{A1cvp}.
\end{thm}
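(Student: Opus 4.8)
The whole argument hinges on the identity $F_{\mu,\rho}(u)=E_\rho(u)-\mu\|u\|_2^2$ and on the fibering structure it inherits. Write $Q(u):=\int_{\R^2}\big(|\nabla u|^2+V(x)|u|^2\big)dx-\Om\int_{\R^2}x^\perp\cdot(iu,\nabla u)dx$, so $E_\rho(u)=Q(u)-\f{2\rho^{p-1}}{p+1}\int_{\R^2}|u|^{p+1}dx$, and for $u\not\equiv0$ set $g_u(t):=F_{\mu,\rho}(tu)=t^2\big(Q(u)-\mu\|u\|_2^2\big)-\f{2\rho^{p-1}}{p+1}t^{p+1}\int_{\R^2}|u|^{p+1}dx$ for $t>0$. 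Because $1<p<3$, whenever $Q(u)-\mu\|u\|_2^2>0$ the map $g_u$ is strictly increasing and then strictly decreasing, hence has a unique positive critical point, which is its strict global maximum on $(0,\infty)$; moreover $g_u'(1)=\langle F_{\mu,\rho}'(u),u\rangle$. Consequently every $v\in S_{\mu,\rho}$ satisfies $g_v'(1)=0$, which both gives the Nehari identity $Q(v)-\mu\|v\|_2^2=\rho^{p-1}\int_{\R^2}|v|^{p+1}dx>0$ and shows that $t=1$ is the strict maximum of $g_v$. I shall also use that, by \eqref{11.19} and Lemma \ref{lem.com}, $I(\rho)$ is finite and attained, and that minimizers of \eqref{A1cvp} and elements of $S_{\mu,\rho}$ are regular enough to differentiate $F_{\mu,\rho}$ along the rays through them.

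\textbf{A minimizer is a ground state.} Let $u_0$ be a minimizer of \eqref{A1cvp}. By the Lagrange multiplier rule it solves \eqref{A11udeELE} with the $\mu_0$ of \eqref{1.12}, so $u_0\in S_{\mu_0,\rho}$. For any $v\in S_{\mu_0,\rho}$, using that $t=1$ maximizes $g_v$ and that $v/\|v\|_2\in\H$ is admissible for \eqref{A1cvp},
\[
F_{\mu_0,\rho}(v)=g_v(1)=\max_{t>0}g_v(t)\ \ge\ g_v\!\big(1/\|v\|_2\big)=E_\rho\!\big(v/\|v\|_2\big)-\mu_0\ \ge\ I(\rho)-\mu_0=E_\rho(u_0)-\mu_0=F_{\mu_0,\rho}(u_0),
\]
the last equality using $\|u_0\|_2=1$. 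Hence $u_0\in G_{\mu_0,\rho}$, i.e.\ $u_0$ is a ground state of \eqref{bc1-1} for $\mu_0$.

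\textbf{A ground state is a minimizer.} Let $u\in G_{\mu,\rho}$ and put $\Lambda:=\|u\|_2>0$. Since $u$ solves \eqref{bc1-1}, $t=1$ maximizes $g_u$, whence
\[
F_{\mu,\rho}(u)=g_u(1)\ \ge\ g_u(1/\Lambda)=E_\rho(u/\Lambda)-\mu\ \ge\ I(\rho)-\mu ,
\]
and, the maximum of $g_u$ being \emph{strict}, equality in the first inequality forces $\Lambda=1$ while equality in the second forces $u/\Lambda$ to minimize \eqref{A1cvp}. So it remains to prove $F_{\mu,\rho}(u)\le I(\rho)-\mu$. For this I would use that $u$ realizes $\inf_{S_{\mu,\rho}}F_{\mu,\rho}$, together with the reparametrization of $S_{\mu,\rho}$ through the unit sphere (along the Nehari set $\mathcal{N}_\mu:=\{v\not\equiv0:\langle F_{\mu,\rho}'(v),v\rangle=0\}\supseteq S_{\mu,\rho}$, on which $\inf F_{\mu,\rho}=\inf_{S_{\mu,\rho}}F_{\mu,\rho}$ because the Nehari minimizer is a genuine solution): for $w$ with $\|w\|_2=1$ and $Q(w)>\mu$, the point $t_w w\in\mathcal{N}_\mu$ on the ray through $w$ satisfies $F_{\mu,\rho}(t_w w)+\mu=\max_{t>0}F_{\mu,\rho}(tw)+\mu\ge F_{\mu,\rho}(w)+\mu=E_\rho(w)\ge I(\rho)$. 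Choosing $w=u_\star$, a minimizer of \eqref{A1cvp} (which exists by Theorem \ref{thm.exist}), and using that $u_\star$ actually solves \eqref{A11udeELE}, the maximum on the right is attained at $t=1$, so the chain collapses to $F_{\mu,\rho}(t_{u_\star}u_\star)=I(\rho)-\mu$; hence $F_{\mu,\rho}(u)\le F_{\mu,\rho}(t_{u_\star}u_\star)=I(\rho)-\mu$, and combined with the reverse bound this gives $\Lambda=1$ with $u$ a minimizer of \eqref{A1cvp}.

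The step I expect to be the genuine obstacle is the one compressed into the last sentence: that the maximum of $t\mapsto F_{\mu,\rho}(tu_\star)$ is at $t=1$ amounts to $\langle F_{\mu,\rho}'(u_\star),u_\star\rangle=0$, i.e.\ to $\mu$ coinciding with the multiplier $\mu_\star$ of $u_\star$; equivalently, one must show that the value $\mu$ carried by a ground state is necessarily a Lagrange multiplier of a minimizer of \eqref{A1cvp}. I would obtain this from the elementary computation behind the reparametrization: for fixed $w$ on the unit sphere, $\mu\mapsto\max_{t>0}F_{\mu,\rho}(tw)+\mu$ is an explicit function of $\mu$ whose minimum, thanks precisely to the dichotomy $1<p<3$, is the non‑degenerate value $E_\rho(w)$ attained exactly at $\mu=Q(w)-\rho^{p-1}\|w\|_{p+1}^{p+1}$; therefore $\inf_{S_{\mu,\rho}}F_{\mu,\rho}\ge I(\rho)-\mu$ with equality precisely when $\mu$ is such a multiplier, and since $u\in G_{\mu,\rho}$ attains that infimum the identification $\mu=\mu_\star$ follows. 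The remaining ingredients — positivity of $Q-\mu\|\cdot\|_2^2$ along $S_{\mu,\rho}$, elliptic regularity, coercivity of $E_\rho$ on the unit sphere and attainment of $I(\rho)$ — are routine, being immediate from the Nehari identity, standard bootstrap, \eqref{11.19} and Lemma \ref{lem.com}.
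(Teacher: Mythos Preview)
The paper does not give its own proof; it simply defers to \cite[Proposition A.1]{GLWZ}. So there is no detailed argument to compare against, and the question is whether your proof stands on its own.

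Your forward direction (minimizer $\Rightarrow$ ground state) is correct and cleanly done: the fibering chain
\[
F_{\mu_0,\rho}(v)=g_v(1)\ge g_v(1/\|v\|_2)=E_\rho(v/\|v\|_2)-\mu_0\ge I(\rho)-\mu_0=F_{\mu_0,\rho}(u_0)
\]
is valid because the Nehari identity for $v\in S_{\mu_0,\rho}$ guarantees $Q(v)-\mu_0\|v\|_2^2>0$, so $t=1$ really is the global maximum of $g_v$.

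The converse direction has a genuine gap, and your own ``obstacle'' paragraph does not close it. You need $F_{\mu,\rho}(u)\le I(\rho)-\mu$. Your proposed route is to take a minimizer $u_\star$ of $I(\rho)$ and argue that the maximum of $t\mapsto F_{\mu,\rho}(tu_\star)$ is attained at $t=1$ ``because $u_\star$ solves \eqref{A11udeELE}''. But $u_\star$ solves \eqref{A11udeELE} with its \emph{own} multiplier $\mu_\star$, not with the given $\mu$; the fibering critical point for $F_{\mu,\rho}$ along $\R^+u_\star$ sits at $t=1$ only when $Q(u_\star)-\mu=\rho^{p-1}\|u_\star\|_{p+1}^{p+1}$, i.e.\ exactly when $\mu=\mu_\star$. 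That is precisely what you are trying to prove, so the argument is circular. Your subsequent computation --- that for $\|w\|_2=1$ the map $\mu\mapsto\max_{t>0}F_{\mu,\rho}(tw)+\mu$ has minimum $E_\rho(w)$ at $\mu=Q(w)-\rho^{p-1}\|w\|_{p+1}^{p+1}$ --- is correct, but it only yields
\[
\inf_{\mathcal N_\mu}F_{\mu,\rho}+\mu\;\ge\;\inf_{\|w\|_2=1}E_\rho(w)=I(\rho),
\]
which is the inequality you already had from the direct chain, not the reverse one. Knowing that $u\in G_{\mu,\rho}$ attains $\inf_{S_{\mu,\rho}}F_{\mu,\rho}$ does not force equality here: nothing you have written excludes the possibility that $\inf_{S_{\mu,\rho}}F_{\mu,\rho}>I(\rho)-\mu$ strictly when $\mu$ is not the multiplier of any minimizer, and in that case $\|u\|_2\neq1$ and $u$ is \emph{not} a minimizer of \eqref{A1cvp}. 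To make the converse work you must either (i) supply an independent argument that a ground state for a given $\mu$ necessarily has $\|u\|_2=1$ (equivalently, that ground states exist only for $\mu=\mu_\star$), or (ii) interpret the theorem as asserting the equivalence only for the specific $\mu$ produced in the forward direction, in which case the converse is the easy case $\mu=\mu_\star$ and your first chain already gives it with equality. As written, the converse is not proved.
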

Since the proof of Theorem A.1 is similar to \cite [Proposition A.1] {GLWZ}, 
we omit it here.
	\qed


\end{document}